\documentclass[letterpaper,11pt]{amsart}
\usepackage{amsmath,amssymb,amsthm,pinlabel,tikz,hyperref,mathrsfs,color}
\usepackage{verbatim}
\newcommand{\nc}{\newcommand}
\nc{\dmo}{\DeclareMathOperator}
\dmo{\ra}{\rightarrow}
\dmo{\N}{\mathbb{N}}
\dmo{\Z}{\mathbb{Z}}
\dmo{\Q}{\mathbb{Q}}
\dmo{\R}{\mathbb{R}}
\dmo{\C}{\mathcal{C}}
\dmo{\AC}{\mathcal{AC}}
\dmo{\Mod}{Mod}
\dmo{\PMod}{PMod}
\dmo{\B}{B}
\dmo{\PB}{PB}
\dmo{\I}{\mathcal{I}}
\dmo{\el}{\ell_{\C}}
\dmo{\NN}{\mathcal{N}}
\dmo{\rk}{rk}

\usetikzlibrary{decorations.markings}
\tikzset{->-/.style={decoration={
  markings,
  mark=at position #1 with {\arrow[scale=3]{>}}},postaction={decorate}}}
  
\nc{\nt}{\newtheorem}

\nt{theorem}{Theorem}

\newtheorem{thm}{{\bf Theorem}}[section]
\newtheorem{lem}[thm]{{\bf Lemma}}
\newtheorem{cor}[thm]{{\bf Corollary}}
\newtheorem{prop}[thm]{{\bf Proposition}}

\newtheorem{remark}[thm]{Remark}
\newtheorem{ex}[thm]{Example}

\newtheorem{ques}[thm]{Question}
\newtheorem{question}[thm]{Question}
\newtheorem{conj}[thm]{Conjecture}

\numberwithin{equation}{section}
\nt*{theorem_nonumber}{Theorem}

\title[Pseudo-Anosov monodromies of fibered 3-manifolds]
{Asymptotic
  translation lengths and normal generation for pseudo-Anosov monodromies
  of fibered 3-manifolds}

\date{\today}
\author{Hyungryul Baik}
\address{%
		Department of Mathematical Sciences, KAIST\\
		291 Daehak-ro Yuseong-gu, Daejeon, 34141, South Korea 
}
\email{%
        hrbaik@kaist.ac.kr
}

\author{Eiko Kin}

\address{%
        Department of Mathematics, Graduate School of Science, Osaka University Toyonaka, Osaka 560-0043, JAPAN
}
\email{%
        kin@math.sci.osaka-u.ac.jp
        }

\author{Hyunshik Shin}

\address{%
        Department of Mathematics\\
        University of Georgia\\
		Athens, GA 30602, USA
}
\email{%
        hyunshik.shin@uga.edu
}

\author{Chenxi Wu}

\address{%
        Department of Mathematics, Rutgers University - Busch Campus\\
        	110 Frelinghuysen Road, Piscataway, NJ 08854-8019, USA
}
\email{%
        cwu@math.rutgers.edu
}

\begin{document}
\begin{abstract}
Let $M$ be a hyperbolic fibered 3-manifold. 
We study properties of sequences $(S_{\alpha_n},  \psi_{\alpha_n})$ of fibers and monodromies 
for primitive integral classes in the fibered cone of $M$. 
The main object is the asymptotic translation length $\ell_{\C} (\psi_{\alpha_n})$ of the pseudo-Anosov monodromy $ \psi_{\alpha_n}$ on the curve complex. 
We first show that 
there exists a constant $C>0$ depending only on the fibered cone 
such that for any primitive integral class $(S, \psi)$ in the fibered cone, 
 $\ell_{\C} (\psi)$ is bounded from above by $C/|\chi(S)|$. 
We also obtain a moral connection 
between $\ell_{\C} (\psi)$ and the normal generating property of  $\psi$ in the mapping class group on $S$.  
We show that for all but finitely many primitive integral classes $(S, \psi)$ 
in an arbitrary 2-dimensional slice of the fibered cone, 
$\psi$  normally generates the mapping class group on $S$. 
In the second half of the paper, we study if it is possible to obtain a continuous extension of normalized asymptotic translation lengths on the curve complex as a function on the fibered face. An analogous question for normalized entropy has been answered affirmatively by Fried and the question for normalized asymptotic translation length on the arc complex in the fully punctured case has been answered negatively by Strenner. 
We show that such an extension in the case of the curve complex does not exist in general by explicit computation for sequences in the fibered cone of the magic manifold. 

\end{abstract}

\maketitle

%
%

\section{Introduction}	\label{section:introduction}

Let $M$ be a hyperbolic fibered  3-manifold. 
Thurston introduced the so-called Thurston norm on the first
cohomology group of $M$, and showed that the unit norm ball is a finite sided polyhedron. 
Let $F$ be a top-dimensional face of this polyhedron and consider a primitive
integral class contained in the open cone $ \mathscr{C}= \mathscr{C}_F$ over $F$. 
Thurston showed that if this cohomology class corresponds to
a fibration of $M$ over the circle $S^1$, then all primitive integral classes
in $ \mathscr{C}$  correspond to fibrations of $M$ over $S^1$. 
In such case, we call $F$ a \textit{fibered face} and the open cone  $ \mathscr{C}$  
a \textit{fibered cone}. 
For each primitive integral  class $\alpha \in \mathscr{C}$,
let ($S_{\alpha}, \psi_{\alpha})$ be the pair of corresponding fiber and its monodromy.
Since $M$ is hyperbolic, the monodromy 
$\psi_{\alpha}$ is pseudo-Anosov by Thurston's hyperbolization theorem 
(see, for example \cite[Theorem 13.4]{FarbMargalit12}).
In this paper, we study the asymptotic translation length of $\psi_{\alpha}$ on the curve complex of 
the surface $S_{\alpha}$ and the normal generators of mapping class groups $\mathrm{Mod}(S_{\alpha})$.

Let $G$ be a group acting isometrically on a metric space $(X, d_X)$.
For $h \in G$, the \textit{asymptotic} \textit{translation length} (or \textit{stable length}) of $h$  is defined by
$$ \ell_X(h) = \liminf_{n \to \infty} \dfrac{ d_X(x, h^n x) }{n}, $$ 
where $x$ is a point in $X$.	
It is not hard to see that $\ell_X(h)$ is independent of the choice of $x$.

For a surface $S$, let $\mathcal{T}(S)$ be the Teichm\"uller space
of $S$ and  let $\mathcal{C}(S)$ be the curve complex of $S$. Since
$\psi_\alpha$ acts by an isometry on both $\mathcal{T}(S_\alpha)$ and
$\mathcal{C}(S_\alpha)$, one can consider the asymptotic
translation lengths of $\psi_\alpha$ on $\mathcal{T}(S_\alpha)$ and
on $\mathcal{C}(S_\alpha)$, denoted by $\ell_\mathcal{T}(\psi_\alpha)$
and $\ell_{\mathcal{C}}(\psi_\alpha)$ respectively. 

There has been a lot of work on
$\ell_{\mathcal{T}}(\psi_\alpha)$ for primitive integral classes $\alpha$ in the fibered cone. 
See 
\cite{FLP, Fried82, fried1982geometry, Matsumoto87, LongOertel97, McMullen00}.

In the case of $\ell_{\mathcal{C}}(\psi_\alpha)$, there has
also been some progress in the literature. 
See 
\cite{MasurMinsky99, Bowditch08, FarbLeiningerMargalit08, GadreTsai11, GadreHironakaKentLeininger13, Valdivia14, AougabTaylor15,  Valdivia17, KinShin18, BaikShin18, BaikShinWu18}.

The following is a general upper bound  of $\ell_{\mathcal{C}}(\psi_\alpha)$ 
in the fibered cone 
in terms of the Euler characteristic $\chi(S_{\alpha})$ of $S_{\alpha}$.

\begin{thm}[\cite{BaikShinWu18}] \label{thm:BaikShinWu}
Let $F$ be a fibered face of a closed
  hyperbolic  fibered 3-manifold $M$. 
  Let $K$ be a 
  compact subset of the interior $int(F)$ of $F$. 
  Then there exists a constant
  $C $ depending on $K$ such that for any sequence $(S_{\alpha_n}, \psi_{\alpha_n})$ of primitive integral classes which is contained
  in the intersection between the cone over $K$ and a  $(d+1)$-dimensional rational subspace of $H^1(M)$, we have
  $$\ell_{\mathcal{C}}(\psi_{\alpha_n}) \leq  \frac{C}{|\chi(S_{\alpha_n})|^{1+\frac{1}{d}} }. $$
\end{thm}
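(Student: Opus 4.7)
The plan is to construct, for each primitive integral class $\alpha_n$ in the hypothesis, an essential simple closed curve $c_n \subset S_{\alpha_n}$ together with a positive integer $N_n$ with $N_n \gtrsim |\chi(S_{\alpha_n})|^{1+1/d}$ such that $d_{\mathcal{C}(S_{\alpha_n})}(c_n, \psi_{\alpha_n}^{N_n}(c_n))$ is bounded by a constant $D$ depending only on $K$; the desired upper bound on $\ell_{\mathcal{C}}(\psi_{\alpha_n})$ is then immediate from the definition of the asymptotic translation length.

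The first step is setup. Write $V \subset H^1(M;\mathbb{R})$ for the given $(d+1)$-dimensional rational subspace and $L := V \cap H^1(M;\mathbb{Z})$ for its integer lattice of rank $d+1$. Compactness of $K \subset \mathrm{int}(F)$ together with the identity $|\chi(S_\alpha)| = \|\alpha\|_T$ and the linearity of the Thurston norm on the fibered cone $\mathscr{C}_F$ produces constants, depending only on $K$, that make $|\chi(S_{\alpha_n})|$ comparable to any Euclidean norm $R_n := \|\alpha_n\|$ on $V$. Fried's suspension flow construction provides a single flow $\phi_t$ on $M$ whose first return map to any primitive integral fiber $S_\alpha$ is isotopic to $\psi_\alpha$, and both the fibration and the flow vary continuously as $\alpha$ ranges over $\mathscr{C}_F$, so geometric constructions can be made once in $M$ and then cut by the individual fibers.

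The heart of the argument is a Minkowski-style lattice estimate coupled with a geometric intersection construction. Projecting $L$ to the $d$-dimensional quotient $V/\mathbb{R}\alpha_n$ yields a rank-$d$ lattice whose covolume scales like $R_n^{-1}$, so Minkowski's theorem produces a nonzero $\beta_n \in L$ whose projection has norm $\lesssim R_n^{-1/d}$; equivalently, $\beta_n$ is nearly a real multiple of $\alpha_n$, with perpendicular deviation of size at most $R_n^{-1/d}$. I would take $c_n$ to be an essential component of the multicurve $S_{\alpha_n} \cap \Sigma_n$, where $\Sigma_n \subset M$ is a smoothly embedded surface dual to $\beta_n$, and then exploit the near-parallelism of $\beta_n$ and $\alpha_n$: choosing $N_n$ to be the integer $\alpha_n$-coordinate of $\beta_n$ (up to a sign), the flow time required for $\phi_t$ to carry $c_n$ back to a translate of itself on $S_{\alpha_n}$ is $\asymp R_n^{1+1/d}$, since the tiny perpendicular component of $\beta_n$ has to be accumulated over $R_n^{1/d}$ visits of the curve before it completes a full period. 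Translating flow time into monodromy iterates gives the integer $N_n$ with the required growth.

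The hardest step is converting the lattice smallness estimate into an actual uniform bound on $d_{\mathcal{C}(S_{\alpha_n})}(c_n, \psi_{\alpha_n}^{N_n}(c_n))$. I would appeal to a global combinatorial structure, such as an invariant branched surface or an Agol-style layered veering triangulation associated to $F$, that carries every fiber in $\mathscr{C}_F$ simultaneously; this would force both $c_n$ and $\psi_{\alpha_n}^{N_n}(c_n)$ to lie in a uniformly bounded-complexity family of multicurves in $M$, and hence in $S_{\alpha_n}$. A Masur--Minsky-type upper bound on curve complex distance in terms of the geometric intersection number then produces the constant $D$. The delicate bookkeeping lies in making the choice of $c_n$ canonical enough to behave well under variation of $\alpha_n$, controlling the constants through the compactness of $K$, and ensuring that the lattice representative $\beta_n$ can be chosen with its $\alpha_n$-component simultaneously large (to give the desired $N_n$) and its perpendicular component as small as Minkowski allows.
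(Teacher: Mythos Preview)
The present paper does not give a proof of this theorem: it is quoted from \cite{BaikShinWu18}, and the only discussion of the argument here is Remark~\ref{rem_BSW}, which indicates how the closed case adapts to the compact-with-boundary case by allowing essential arcs as well as curves. So there is no in-paper proof to benchmark against.

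Your outline does assemble the same basic ingredients as the argument in \cite{BaikShinWu18}: Fried's common suspension flow for the fibered face, intersection multicurves of the form $S_{\alpha_n}\cap\Sigma$ for an auxiliary transverse surface $\Sigma$, and a Minkowski-type short-vector estimate inside the $(d+1)$-dimensional rational slice. The gap is in how you pass from the short lattice vector to the large iterate $N_n$. Minkowski on $V/\mathbb{R}\alpha_n$ produces $\beta_n\in L$ with perpendicular part $\|\epsilon_n\|\lesssim R_n^{-1/d}$, but in the decomposition $\beta_n=s_n\alpha_n+\epsilon_n$ the coefficient $s_n$ is a real number, not an integer, and one may always choose a lift with $|s_n|\le\tfrac12$; nothing in what you wrote forces any integer built from $\beta_n$ to be of order $R_n^{1+1/d}$. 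The sentence ``the tiny perpendicular component has to be accumulated over $R_n^{1/d}$ visits before it completes a full period'' is a heuristic, not a lattice identity, and as written it does not produce a well-defined $N_n$ together with a curve whose $N_n$-th iterate you can control. Your final paragraph then defers the crucial bound on $d_{\mathcal{C}}(c_n,\psi_{\alpha_n}^{N_n}(c_n))$ to an unspecified branched-surface or veering-triangulation argument plus a Masur--Minsky intersection estimate; but the logarithmic bound $d_{\mathcal{C}}(a,b)\lesssim \log i(a,b)$ gives a uniform constant only once $i(c_n,\psi_{\alpha_n}^{N_n}(c_n))$ is itself shown to be uniformly bounded in $n$, and you have not done this. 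Both load-bearing steps---producing $N_n\gtrsim|\chi(S_{\alpha_n})|^{1+1/d}$ and bounding the resulting curve-complex displacement---are therefore missing.
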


Here $(d+1)$-dimensional rational subspace of $H^1(M)$ means a subspace of $H^1(M)$ 
which admits a basis $v_1, \cdots, v_{d+1} \in H^1(M; {\Bbb Q})$. 
 We note that in \cite{BaikShinWu18} the above theorem was stated in the case 
 of closed hyperbolic fibered 3-manifolds, but almost the same proof can be adopted
 to the case of compact hyperbolic fibered 3-manifolds possibly with boundary, 
 see Remark \ref{rem_BSW}.

Two additional questions naturally arise from Theorem  \ref{thm:BaikShinWu}. 
First, what can we say
if the sequence is not contained in the cone over any compact subset of 
the fibered face $F$?
For instance, given a sequence that has a subsequence converging projectively to the
boundary $\partial F$, can we determine the upper bound of the 
asymptotic translation length of the  pseudo-Anosov monodromies?
We answer the first question in the following theorem.

\newtheorem*{thm:fibration}{Theorem \ref{thm:newbound}}
\begin{thm:fibration}
Let $F$ be a fibered face of a compact hyperbolic fibered 3-manifold possibly with boundary.
Then there exists a constant $C$ depending on $F$ such that 
for any primitive integral class $(S, \psi) \in \mathscr{C}_F$, 
we have
$$\ell_{\C} (\psi) \leq \frac{C}{|\chi(S)|}.$$
\end{thm:fibration}

We remark that the upper bound in Theorem \ref{thm:newbound} is optimal. 
In Lemma \ref{lem:optimalupperbound}, we give an explicit sequence $(S_{\alpha_n},  \psi_{\alpha_n})$ 
converging projectively to a  point in   $\partial F$ such that the asymptotic translation 
length of the corresponding pseudo-Anosov monodromy is comparable to
 $1/|\chi(S_{\alpha_n})|$. 
That is, there exists a constant $C$ such that
$$\frac{1}{C} \frac{1}{|\chi(S_{\alpha_n})|} \leq \ell_{\mathcal{C}}(\psi_{\alpha_n}) \leq  \frac{C}{|\chi(S_{\alpha_n})|}.$$
In general, for real-valued functions $A(x)$ and $B(x)$, 
we say that $A(x)$ is \textit{comparable} to $B(x)$ 
if there exists a constant $C$ independent of $x$ 
such that $1/C \leq A(x)/B(x) \leq C$.
We denote it by $A(x) \asymp B(x)$.

The second question is whether the upper bound in Theorem
\ref{thm:BaikShinWu} is sharp. 
It is noted in \cite{BaikShinWu18} that the bound is optimal for $d = 1$. 
In this paper, we show that it is also optimal when
$d=2$ by constructing an example coming from 
the magic manifold $N$, 
which is the exterior of some $3$ components link in the $3$-sphere $S^3$.



\newtheorem*{thm:allrationalexponents}{Theorem \ref{thm:allrational}}
\begin{thm:allrationalexponents}
Let $F$ be a fibered face of the magic manifold. 
Then there exist two points  
$\mathfrak{b}_0 \in \partial F$ and $\mathfrak{c}_0 \in int(F)$ 
 which satisfy the following. 
\begin{enumerate}
\item 
For any $r \in \mathbb{Q} \cap [1,2)$, there exists  a sequence $(S_{\alpha_n},  \psi_{\alpha_n})$ of primitive integral classes 
 in $\mathscr{C}_F$ converging projectively to $\mathfrak{b}_0 $ as $n \to \infty$ 
  such that 
$$\ell_{\mathcal{C}}(\psi_{\alpha_n}) \asymp
  \frac{1}{|\chi(S_{\alpha_n})|^{r} }. $$

\item 
For any $r \in \Q \cap [\frac{3}{2}, 2]$, 
there exists a sequence $(S_{\alpha_n},  \psi_{\alpha_n})$ of primitive integral classes  in $\mathscr{C}_F$ 
converging projectively to $\mathfrak{c}_0$ as $n \rightarrow \infty$
such that 
$$\ell_{\C}(\psi_{\alpha_n}) \asymp \frac{1}{|\chi(S_{\alpha_n})|^r}.$$
In particular, the upper bound in Theorem  \ref{thm:BaikShinWu} is optimal  when $d=2$. 
\end{enumerate}
\end{thm:allrationalexponents}



As an immediate corollary of Theorem \ref{thm:allrational}, we conclude that 
there is no normalization of the asymptotic translation length function defined on the rational classes of the fibered face, 
which continuously extends to the whole fibered face. 
More precisely, we have the following. 
\newtheorem*{cor:noextension}{Corollary \ref{cor:nocontinuousextension}}
\begin{cor:noextension} 
Let $F$ be a fibered face of the magic manifold $N$.
For $\alpha \in F \cap H^1(N;\Q)$, let $(S_{\widetilde{\alpha}}, \psi_{\widetilde{\alpha}})$
be the fiber and pseudo-Anosov monodromy corresponding to the primitive integral class $\widetilde{\alpha}$ lying on 
the ray of $\alpha$ passing through the origin.
Then there is no normalization of the asymptotic translation length function
\begin{align*}
F \cap H^1(N;\Q) & \rightarrow \R_{\geq 0}\\
\alpha &\mapsto \ell_{\C}(\psi_{\widetilde{\alpha}}),
\end{align*}
in terms of the Euler characteristic $\chi(S_{\widetilde{\alpha}})$ which admits a continuous extension on $F$.
\end{cor:noextension} 

For the arc complex, 
Strenner defined in \cite{Strenner18}  the normalized asymptotic translation length function $\mu_d$ for each integer $d \ge 1$ 
on the rational classes of a fibered face with the fully punctured condition. 
Strenner  proved in the same paper 
that the functions $\mu_d$ for $d \ge 2$ are typically nowhere continuous. 
His result and Corollary \ref{cor:nocontinuousextension} stand in contrast to Fried's result \cite{Fried82}. 
See also Matsumoto \cite{Matsumoto87} and McMullen \cite{McMullen00}. 
They proved that the normalized  entropy function of pseudo-Anosov monodromies has a continuous extension
on the fibered face, which is strictly convex.

Now we turn our attention to normal generation of mapping class groups.  
Let $S= S_{g,n}$ be an orientable surface of genus $g$ with $n$ punctures, possibly $n=0$.  
We denote $S_{g,0}$ by $S_g$.  
The next theme of this paper is normal generators of the mapping class group $\mathrm{Mod}(S)$.
We say that an element $h$ of a group $G$ {\it normally generates} $G$
if the normal closure of $h$ is equal to $G$.
For a given primitive class $(S_\alpha, \psi_\alpha)$ in the fibered cone $\mathscr{C}$, 
when does $\psi_\alpha$ normally generate  $\Mod(S_\alpha)$? 
This question is motivated by the work of
Lanier--Margalit \cite{LanierMargalit18}. 
They showed in the same paper that for a pseudo-Anosov element $f \in \Mod(S_g)$, 
if the stretch
factor $\lambda(f)$ is smaller than $\sqrt{2}$, then $f$ normally
generates $\Mod(S_g)$. 

This connects up with our brief discussion about asymptotic
translation length, since the logarithm of the stretch factor $\log \lambda(f)$ is equal to $\ell_\mathcal{T}(f)$. 
In other words, if a pseudo-Anosov element of $\Mod(S)$ is contained in some proper
normal subgroup, then its asymptotic
translation length on the Teichm\"uller space cannot be too small. 
It is natural to ask an analogous statement for the curve complexes, i.e., 
if a pseudo-Anosov element of $\Mod(S)$ is contained in some proper
normal subgroup, then its asymptotic
translation length on the curve complex cannot be too small in some sense. 
The following question is raised by Dan Margalit \cite{margp}.

\begin{ques}
\label{conj:normalsubgroup} 
For a subgroup $H $  of $\Mod(S_g)$, let us set 
$$L_\mathcal{C}(H) = \min \{ \ell_\mathcal{C}(f) : f \mbox{ is pseudo-Anosov and } f \in H\}.$$ 
Is there a constant $C>0$ such that for any $g \geq 2$ and 
for any proper normal subgroup $H$ of $\Mod(S_g)$, we have
$$L_\mathcal{C}(H) \geq \frac{C}{g}?$$ 
\end{ques} 

As a partial evidence toward this question, 
it is shown by Baik--Shin \cite{BaikShin18} 
that 
$$L_{\C}(\I_g) \asymp \frac{1}{g},$$ 
where $\I_g$ is the Torelli group, i.e., the proper normal subgroup of $\Mod(S_g)$ whose action on
the first homology is trivial. 
In fact, by \cite[Theorem 3.2]{BaikShin18}, we have 
$L_{\C}(\I_g) \ge \frac{1}{96(g-1)}$ for all $g \ge 2$.

Combining with Theorem \ref{thm:newbound}, 
we propose the following conjecture 
regarding the normal generators of mapping class groups contained in the fibered cone 
which was originally asked as a question by Dan Margalit \cite{margp}. 


\begin{conj}
\label{conj:normalgeneration} 
Let $F$ be a fibered face of a closed  hyperbolic fibered 3-manifold $M$. 
Then for all but finitely many primitive classes $(S_\alpha,\psi_\alpha) \in \mathscr{C}_F$, 
$\psi_\alpha$ normally generates $\Mod(S_\alpha)$. 
\end{conj}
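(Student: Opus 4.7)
The natural strategy is to promote Theorem~\ref{thm:newbound} from the curve complex translation length to the Teichm\"uller translation length, and then invoke the Lanier--Margalit criterion cited in the excerpt: a pseudo-Anosov $f \in \Mod(S_g)$ with stretch factor $\lambda(f) < \sqrt{2}$ normally generates $\Mod(S_g)$, provided $g \geq 3$. Since $M$ is closed, each fiber $S_\alpha$ is closed with $g(S_\alpha) = (|\chi(S_\alpha)|+2)/2$, so the genus hypothesis is automatic as soon as $|\chi(S_\alpha)| \geq 4$. What one really needs is a bound of the form $\log\lambda(\psi_\alpha) = O(1/|\chi(S_\alpha)|)$, and Fried's theorem on the normalized entropy function $\alpha \mapsto |\chi(S_\alpha)|\log\lambda(\psi_\alpha)$ supplies exactly this on compact subsets of the open fibered face.

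Concretely, I would proceed in two steps. First, for any compact $K \subset F$, Fried's continuity yields a constant $C_K$ with $|\chi(S_\alpha)|\log\lambda(\psi_\alpha) \leq C_K$ for every primitive integral class $\alpha \in \mathscr{C}_F$ whose projective class $\alpha/\|\alpha\|_T$ lies in $K$. Consequently $\log\lambda(\psi_\alpha) \leq C_K/|\chi(S_\alpha)|$, and once $|\chi(S_\alpha)|$ exceeds $C_K/\log\sqrt{2}$ one has $\lambda(\psi_\alpha) < \sqrt{2}$, so the Lanier--Margalit criterion forces $\psi_\alpha$ to normally generate $\Mod(S_\alpha)$. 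Second, the primitive integral classes with projective class in $K$ and Thurston norm at most $C_K/\log\sqrt{2}$ lie in a compact truncated cone, and so form a finite set; thus only finitely many primitive classes whose projective classes lie in $K$ can fail to normally generate.

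The main obstacle is pushing this argument across the entire face $F$, including sequences $(\alpha_n)$ of primitive integral classes whose projective classes accumulate on $\partial F$. Fried's normalized entropy is continuous and strictly convex on the open face but generically diverges on approach to $\partial F$ (its reciprocal extends concavely to $\bar F$ vanishing on $\partial F$), so no single compact $K \subset F$ can capture all but finitely many primitive integral classes in $\mathscr{C}_F$. To close the conjecture in full, one would need either to show that along any sequence whose projective classes tend to $\partial F$ the Thurston norm $\|\alpha_n\|_T$ grows fast enough to dominate the blow-up of the normalized entropy, or alternatively to confirm Question~\ref{conj:normalsubgroup} with a universal constant $C$ strictly larger than the constant $C(F)$ from Theorem~\ref{thm:newbound} and apply that bound directly via the curve complex. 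Either route appears to require genuinely new ideas, which is presumably why the statement is phrased as a conjecture rather than a theorem.
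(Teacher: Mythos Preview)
Your assessment is essentially correct: the statement is a conjecture, not a theorem, and the paper does not prove it in full. Your ``interior'' argument via Fried's normalized entropy plus the Lanier--Margalit $\sqrt{2}$ criterion is exactly the content of the paper's Theorem~\ref{thm:awayfromnbhdofboundary}, though the paper packages it slightly differently: using the concavity of $1/\log K$ on the cone (rather than mere continuity on compacta) it produces a single translate $x+\mathscr{C}$ on which \emph{every} primitive class has stretch factor below $\sqrt{2}$. Your identification of the boundary obstruction is also accurate.

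Where the paper goes further than your proposal is in developing a second, independent tool for sequences that do escape toward $\partial F$. Proposition~\ref{lem:seq} shows that along any arithmetic sequence $\alpha+n\beta$ with $\alpha\in\mathscr{C}$ and $\beta\in\overline{\mathscr{C}}$, one can (for large $n$) exhibit a nonseparating curve $c$ with $c$ and $\psi(c)$ disjoint and non-homologous; the well-suited curve criterion (Lemma~\ref{lem:normalcriterion}) then forces normal generation directly, bypassing any stretch-factor estimate. Combining this with the translate $x+\mathscr{C}$, the paper proves the conjecture on every $2$-dimensional rational slice (Theorem~\ref{thm:normalgeneration}): in dimension two the region $(\mathscr{C}\setminus(x+\mathscr{C}))\cap L$ decomposes into finitely many such arithmetic sequences. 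This is a genuine advance over the pure stretch-factor approach you outline, though as Remark~\ref{remark:dim} notes, it too breaks down in higher-dimensional slices, and the full conjecture remains open.
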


We give a partial answer when primitive integral classes are contained in a 2-dimensional rational subspace of $H^1(M)$. 
See also Remark \ref{remark:dim}.

\newtheorem*{thm:normal}{Theorem \ref{thm:normalgeneration}}
\begin{thm:normal}
Let $F$ be a fibered face of a closed  hyperbolic fibered 3-manifold $M$, 
and let $L$ be a 2-dimensional rational subspace of $H^1(M)$.  
Then for all but finitely many primitive integral classes $(S, \psi)$ in $\mathscr{C}_F \cap L$, $\psi$ normally generates $\Mod(S)$. 
In particular, if the rank of $H^1(M)$ equals $2$, then 
Conjecture \ref{conj:normalgeneration} is true.  
\end{thm:normal}

\subsection*{Acknowledgments}
We thank Susumu Hirose, Michael Landry and Dan Margalit for helpful comments. 
The first author was partially supported by Samsung Science \& Technology Foundation grant No. SSTF-BA1702-01. 
The second author was supported by 
Grant-in-Aid for Scientific Research (C) (No. 18K03299), JSPS.  
The third author was supported by Basic Science Research Program
through the National Research Foundation of Korea(NRF) funded by the Ministry of Education
(NRF-2017R1D1A1B03035017).

%
%

\medskip
\section{Arithmetic sequences in the fibered cone}	\label{sectoin:arithmeticsequences}
For a hyperbolic  3-manifold $M$ possibly with boundary $\partial M$, 
Thurston \cite{Thurston86} defined a norm $|| \cdot ||$ on $H_2(M, \partial M;\R)$. 
It turns out the unit norm ball $B_M$ with respect to the Thurston norm is a finite-sided polyhedron. 
Let $F$ be a  top-dimensional face of $B_M$. 
We consider an open cone $\mathscr{C} = \mathscr{C}_F$ over $F$. 
Thurston showed that if $M$ is a fibered $3$-manifold, then 
either all integral points in $\mathscr{C}$ are fibered or none of them are fibered.
In the former case, we call $\mathscr{C}$ a \textit{fibered cone}. 
We denote by $\overline{\mathscr{C}}$ the closure of the fibered cone $\mathscr{C}$. 

By abuse of notation, the first cohomology classes are treated as their
dual second homology classes throughout this paper without explicitly
mentioning it. 
Furthermore, we will write a primitive integral class $\alpha \in H^1(M)$ as a pair $(S, \psi)$ 
when $S$ and $\psi$ are the fiber and the monodromy for the fibration over $S^1$ 
corresponding to $\alpha$.

In this section, we will show a key property of infinite arithmetic sequences
in a fibered cone for the proof of Theorem \ref{thm:normalgeneration}. 
Here by an arithmetic sequence we mean a sequence of them \
$(\alpha + n \beta)_{n \in \mathbb{Z}_{\geq 0}}$ where $\alpha$ (resp. $\beta$) is a primitive integral class in a fibered cone $\mathscr{C}$ 
(resp. the closure $\overline{\mathscr{C}}$ of the fibered cone $\mathscr{C}$).
We first need to find some criterion
for a given element of the mapping class group to be a normal generator. 
In \cite{LanierMargalit18}, the so-called {\it well-suited curve criterion} is introduced. 
Roughly speaking, this criterion says that if there is a simple closed curve $c$ such that the configuration of $c \cup f(c)$ is simple enough,
then $f$ is a normal generator for the mapping class group.

Here we state one special case that we need and show its proof for the sake of completeness.
For more general statements, see \cite[Sections 2,7,9]{LanierMargalit18}. 
 For a closed curve $c$ in the surface $S_g$ without specified orientation, 
$[c]$ means the homology class in $H_1(S_g)$ with arbitrary orientation. 

\begin{lem}[Lemma 2.3 in \cite{LanierMargalit18}]
\label{lem:normalcriterion}
Let $f \in \Mod(S_g)$ for $g \geq 3$.
Suppose that there is a nonseparating curve $c$ in $S_g$ so that 
$c$ and $f(c)$ are disjoint  
and $\pm [c] \neq [f(c)] \in H_1(S_g)$. 
Then  the normal closure of $f$ is $\Mod(S_g)$.
\end{lem}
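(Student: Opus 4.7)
The plan is to show that the normal closure $N = \langle\langle f \rangle\rangle$ of $f$ contains a Dehn twist about some nonseparating simple closed curve; since $\Mod(S_g)$ is generated by such Dehn twists and they form a single conjugacy class, this would force $N = \Mod(S_g)$. To produce such a twist I would use two commutator computations exploiting the hypothesis, and then a geometric finishing argument that uses $g \ge 3$.

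First, because $c$ and $f(c)$ are disjoint, $T_c$ commutes with $T_{f(c)} = f T_c f^{-1}$, and hence
\[
[f, T_c] \;=\; f\, T_c\, f^{-1}\, T_c^{-1} \;=\; T_{f(c)}\, T_c^{-1}
\]
lies in $N$: indeed, $T_c f^{-1} T_c^{-1}$ is the inverse of a conjugate of $f$, so multiplying on the left by $f \in N$ keeps the product in $N$.

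Next, I would use the homological hypothesis $\pm[c] \ne [f(c)]$. Two disjoint nonseparating simple closed curves are homologous up to sign if and only if they form a bounding pair, so the surface obtained by cutting $S_g$ along $c \cup f(c)$ is connected, of genus $g - 2$ with four boundary components; for $g \ge 3$ it has positive genus, giving enough room to choose a nonseparating simple closed curve $d$ with $i(d, c) = 1$ and $i(d, f(c)) = 0$. Conjugating by $T_d$, which fixes $f(c)$ and sends $c$ to $T_d(c)$, yields
\[
T_d\, (T_{f(c)} T_c^{-1})\, T_d^{-1} \;=\; T_{f(c)}\, T_{T_d(c)}^{-1} \;\in\; N.
\]
Since $T_d(c)$ is still disjoint from $f(c)$ (the support of $T_d$ being so), $T_{f(c)}$ commutes with both $T_c$ and $T_{T_d(c)}$; multiplying by $(T_{f(c)} T_c^{-1})^{-1}$ cancels the $T_{f(c)}$ factors and gives
\[
T_c\, T_{T_d(c)}^{-1} \;=\; [T_c, T_d] \;\in\; N,
\]
a commutator of Dehn twists about two nonseparating simple closed curves meeting in exactly one point.

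The main obstacle is the last step: extracting an honest Dehn twist from $[T_c, T_d]$. My plan here is to invoke (or prove) the fact that in genus $g \ge 3$ a commutator of Dehn twists about a once-intersecting nonseparating pair normally generates $\Mod(S_g)$. One concrete route uses the extra room guaranteed by $g \ge 3$: the complement of a regular neighborhood of $c \cup d$ still has positive genus, and further conjugations supported there, combined with the lantern or chain relation, produce a single Dehn twist about a nonseparating curve inside $N$. Together with the reduction above, this completes the proof.
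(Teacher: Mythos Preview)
Your Steps 1--3 are correct and cleanly executed: you obtain $T_{f(c)}T_c^{-1}\in N$, then use the non-homologous hypothesis to find a curve $d$ with $i(d,c)=1$ and $i(d,f(c))=0$, and conclude $[T_c,T_d]\in N$. (Note that the hypothesis $\pm[c]\neq[f(c)]$ is exactly what makes $c$ nonseparating in $S_g\setminus f(c)$, so that such a $d$ exists.)

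The genuine gap is Step 4. You explicitly write ``invoke (or prove)'' and then sketch a plan that is not a proof: conjugating $[T_c,T_d]$ by homeomorphisms supported in the complement of $c\cup d$ does nothing, since such maps commute with both $T_c$ and $T_d$, and you do not actually carry out any lantern or chain computation. The statement ``$[T_c,T_d]$ normally generates $\Mod(S_g)$ for $g\ge 3$'' is true, but it is not lighter than the lemma you are proving; for $g=2$ it is even false (the element is a commutator and $\Mod(S_2)$ is not perfect), so this is precisely where $g\ge 3$ must do real work.

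The paper avoids this detour entirely. It applies the lantern relation directly to the element you already had after Step 1: one embeds a four-holed sphere in $S_g$ with boundary curves $a,b,c,d$ and interior curves $x,y,f(c)$ so that $(c,f(c))$, $(a,x)$, $(b,y)$ are each disjoint non-homologous nonseparating pairs; the lantern relation then reads
\[
T_d \;=\; (T_c^{-1}T_{f(c)})\,(T_a^{-1}T_x)\,(T_b^{-1}T_y),
\]
and by change of coordinates each factor is a conjugate of $T_c^{-1}T_{f(c)}\in N$. Thus $T_d\in N$ and one is done. If you try to complete your Step 4, the most natural route is to use a short chain $c_1,c_2,c_3$ to pass from $T_aT_b^{-1}$ with $i(a,b)=1$ back to $T_{c_1}T_{c_3}^{-1}$ with $c_1,c_3$ disjoint, and then run exactly this lantern argument --- so your reduction to $[T_c,T_d]$ is a detour rather than a simplification.
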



\begin{proof} 
Let $f$ and $c$ be as in the statement of the lemma. Then one can find
nonseparating curves $a,b,d,x$, and $y$ which satisfy the following
conditions. 

\begin{itemize}
\item $a, b, c$, and $d$ bound a subsurface $S$ of $S_g$ which is homeomorphic to a $4$-punctured
sphere.  
\item each of the triple of curves $(a, b, x)$, $(b,d,y)$ and
  $(b,c,f(c))$ bounds a pair of pants contained in $S$. 
\item no two of the curves $a,b,c,d,x,y$, and $f(c)$ are homologous. 
\end{itemize} 

\begin{center}
\begin{figure}[t]
\includegraphics[scale=.20]{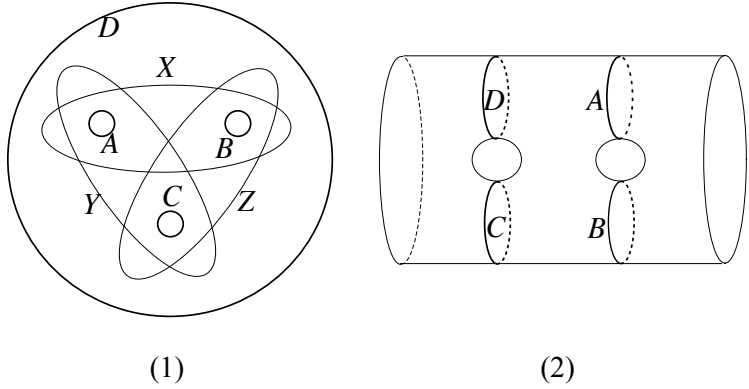}
\caption{(1) $4$-punctured sphere. 
(2) Genus 2 surface with two boundary components.} 
\label{fig_lantern}
\end{figure}
\end{center}

To see the existence of such curves, start with Figure \ref{fig_lantern}(1) 
which is the surface of genus 0 with four boundary components ($4$-punctured sphere) 
labeled by $A, B, C, D$. Glue a pair of pants along the boundary components labeled by $A$ and $B$, and glue another pair of pants along the boundary components labeled 
by $C$ and $D$. Then we get  a surface of genus 2 with two boundary components (Figure \ref{fig_lantern}(2)). 
Along the two boundary components, we glue in another surface of genus $k \geq 0$ with two boundary components. The resulting surface is a closed surface of genus $3+k$. We take $k$ so that $3+k = g$ which is the genus of our given surface $S_g$. 
This is our model surface, and we let $\Sigma$ denote the model surface. 
If we set $a = A$, $b = B$, $c = C$, $d= D$, $x = X$, $y = Y$, $f(c) = Z$, then the above conditions are satisfied by construction. 

By the classification of the compact orientable surfaces, for any two pairs of disjoint non-homologous simple closed curves on the surface, 
there exists a homeomorphism which maps one pair to the other. 
(This is a special case of so-called the change of coordinates principle. See for instance \cite{FarbMargalit12}.)  
Hence, there exists a homeomorphism $\Phi$ from $\Sigma$ to $S_g$ so that $\Phi(C) = c$ and $\Phi(Z) = f(c)$. 
Now set $a = \Phi(A), b = \Phi(B), d = \Phi(D), x = \Phi(X), y = \Phi(Y)$. Then we get the desired set of curves $a, b, d, x, y$ which satisfies all the conditions together with $c, f(c)$.

For any curve $\gamma$ on $S_g$, let $T_\gamma$ be the left-handed Dehn twist about $\gamma$. 
Then by the lantern relation, we have $T_a T_b T_c T_d = T_{f(c)}T_x T_y$. 
Using the commutativity of the Dehn twists about disjoint curves, one can
rewrite the lantern relation as 
$$ T_d = T_c^{-1} T_{f(c)} T_a^{-1} T_x T_b^{-1} T_y. $$ 
Note that $T_c^{-1} T_{f(c)} = T_c^{-1} (f T_c f^{-1})= (T_c^{-1} f T_c) f^{-1}$ which is contained in the normal closure of $f$. 

As before by the change of coordinates principle, 
there exists an orientation-preserving homeomorphism $h$ of $S_g$ such that $h(c)= a$ and $h( f(c) ) = x$. 
Then $T_a^{-1} T_x = T_{h(c)}^{-1}
T_{h(f(c))} = h^{-1} T_c^{-1} T_{f(c)} h$, i.e., it is just a
conjugate of $T_c^{-1} T_{f(c)}$. Hence $T_a^{-1} T_x$ is in the normal closure of
$f$. Similarly, $T_b^{-1} T_y$ is also contained in the normal closure of
$f$. 

This shows that $T_d$ lies in the normal closure of $f$. From the fact that
there exists only one mapping class group orbit of nonseparating
simple closed curves and the Dehn twists about nonseparating simple closed curves
generate the mapping class group, we can now conclude that the entire
mapping class group $\Mod(S_g)$ is contained in the normal
closure of $f$. 
\end{proof}

Now we prove the key proposition on the sequences in the fibered cone.

\begin{prop}
\label{lem:seq} 
Let $\mathscr{C}$ be a fibered cone for a closed hyperbolic fibered 3-manifold $M$. 
Let  $\alpha \in \mathscr{C}$  and $\beta \in \overline{\mathscr{C}}$ 
be  integral classes. 
   Then there is some integer $n_0>0$ depending on $\alpha$ and $\beta$ which satisfies the following. 
 If $(S, \psi) = \alpha + n \beta \in \mathscr{C}$ is a primitive integral class for $n \ge n_0$, then 
there is an essential simple closed curve $c$ on $S$ 
  such that $c, \psi(c), \cdots, \psi^{n-1}(c)$ are disjoint, and 
  $\pm [c ] \ne [\psi(c)]$ in $H_1(S)$.   
\end{prop}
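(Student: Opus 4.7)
The approach is to use Fried's suspension-flow description of the fibered cone and realize the fiber $S = S_{\alpha+n\beta}$ as an oriented double-curve sum whose sheets the monodromy visits in a controlled cyclic order. By Fried's theorem, associated to the fibered cone $\mathscr{C}$ there is a flow $\phi_t$ on $M$ for which every primitive integral $\gamma \in \mathscr{C}$ has fiber $S_\gamma$ as a cross-section, with first-return map equal to $\psi_\gamma$. Fix a Thurston-norm-minimizing (taut) representative $T$ of $\beta$ transverse to $\phi_t$, and place $T$ in general position with $S_\alpha$, so that $\Gamma = S_\alpha \cap T$ is a $1$-submanifold of $S_\alpha$. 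For $n$ large with $\alpha+n\beta$ primitive, form the flow-translates $T^{(k)} = \phi_{k/n}(T)$ for $k = 1, \ldots, n$ (with the $\alpha$-return time normalized to $1$), and let $\Sigma_n$ be the coherent oriented double-curve sum of $S_\alpha$ with $T^{(1)}, \ldots, T^{(n)}$ resolving all intersections. By the linearity of the Thurston norm on $\mathscr{C}$, the surface $\Sigma_n$ is norm-minimizing for $\alpha+n\beta$, so it is isotopic to $S$.

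Choose an essential nonseparating simple closed curve $c \subset S_\alpha$ disjoint from $\Gamma$, arranged generically so that $[c] \neq \pm[\psi_\alpha(c)]$ in $H_1(S_\alpha)$. Such a $c$ exists provided $\Gamma$ is not filling, which can be arranged by the choice of $T$ (or by passing to a suitable finite cyclic cover). Since $c$ misses $\Gamma$, it survives the resolution and sits as a simple closed curve on the $S_\alpha$-sheet of $S$. The core claim is the following: for $n_0$ large enough (depending on the maximal first $T$-hit time $\sigma_1$ along $c$), the forward $\phi_t$-orbit from any point of $c$ hits $S$ first on $T^{(1)}$; iterating, the $k$-th return lies on $T^{(k)}$ for $k = 1, \ldots, n-1$, with $\psi^n(c)$ landing back on $S_\alpha$ as $\psi_\alpha(c)$. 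Since the sheets $S_\alpha, T^{(1)}, \ldots, T^{(n-1)}$ are pairwise disjoint subsets of $M$, the iterates $c, \psi(c), \ldots, \psi^{n-1}(c)$ are pairwise disjoint simple closed curves on $S$.

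For the homological inequality $\pm[c] \neq [\psi(c)]$ in $H_1(S)$, note that $c$ and $\psi(c)$ sit on distinct sheets ($S_\alpha$ and $T^{(1)}$), and their classes in $H_1(S)$ are distinguished through the Mayer--Vietoris description of the oriented sum: the generic choice ensuring $[c] \neq \pm[\psi_\alpha(c)]$ in $H_1(S_\alpha)$ transports to $H_1(S)$ via the inclusion of the $S_\alpha$-sheet, and the fact that $\psi(c)$ does not lie on that sheet prevents the naive identification of the two classes.

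The main obstacle is the alignment of the flow's first-return order with the indexing of the pushed copies $T^{(k)}$: one must verify that for every $y \in c$ and each $k \le n-1$, the $k$-th $\phi_t$-hit of $S$ from $y$ lands on $T^{(k)}$, and not on some $T^{(k')}$ with $k' \neq k$, nor back on $S_\alpha$ prematurely. This reduces to a uniform bound on the return-time functions $\sigma_j$ relative to $1/n$, which in turn determines the threshold $n_0$; compactness of $c$ together with continuity of the return times makes this feasible. A secondary subtlety is the case $\beta \in \partial\mathscr{C}$, where $T$ is not itself a fiber; there the appropriate extension of the Fried flow model to the boundary of the cone, combined with norm-minimality of $T$, is needed to carry out the same argument.
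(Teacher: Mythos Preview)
Your overall framework (Fried's suspension flow, cut-and-paste of $S_\alpha$ with $n$ copies of $S_\beta$) matches the paper, but the argument has a genuine gap at its most important point: the homological inequality $\pm[c]\neq[\psi(c)]$ in $H_1(S)$. You place $c$ on $S_\alpha$ and argue that $\psi^n(c)=\psi_\alpha(c)$, then invoke a generic choice so that $[c]\neq\pm[\psi_\alpha(c)]$ in $H_1(S_\alpha)$. But the statement requires $[c]\neq\pm[\psi(c)]$, and $\psi(c)$ is the \emph{first} return, which in your own setup lands on the sheet $T^{(1)}$, not on $S_\alpha$. So the quantity you control, $[\psi_\alpha(c)]$, is irrelevant; and your Mayer--Vietoris remark that ``$\psi(c)$ does not lie on that sheet prevents the naive identification'' is not an argument --- curves on different sheets of a cut-and-paste can perfectly well be homologous in $H_1(S)$. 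The existence of your curve $c$ is also unproven: that $\Gamma=S_\alpha\cap T$ can be made non-filling ``by the choice of $T$'' or ``by passing to a suitable finite cyclic cover'' is asserted without justification, and changing $M$ by a cover is not permitted. Finally, the first-return ordering claim (that the $k$th return lands on $T^{(k)}$) is left as an obstacle you describe but do not resolve; with $T^{(k)}=\phi_{k/n}(T)$ this is delicate, since the backward $T$-hit times of points on $c$ can produce earlier intersections with $T^{(k)}$ for large $k$.

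The paper proceeds quite differently and avoids all of these issues. It places the curve $c$ on the parallel copies of $S_\beta$ (not on $S_\alpha$), so that the monodromy simply shifts $c_i\mapsto c_{i+1}$ away from the surgery locus --- disjointness of $c,\psi(c),\ldots,\psi^{n-1}(c)$ is then immediate. The hard part, the homological inequality, is handled by a concrete construction: take a pants decomposition of $S_\beta$ containing the intersection curves, encode it as a trivalent graph $G$ with an integer cochain recording how $S_\alpha$ threads through the copies, and prove a short combinatorial lemma that the associated $\mathbb{Z}$-cover $G'$ contains a simple loop of uniformly bounded length. That loop maps to a curve $\gamma\subset S$ hitting some $c_l$ once and missing $c_1$, which directly shows $[c_l]\neq\pm[c_1]$ and, via a telescoping identity in $\psi_*$, forces $[c_2]\neq\pm[c_1]$. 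The boundary case $\beta\in\partial\mathscr{C}$ is handled by invoking the transverse surface theorem of Mosher and Landry (dynamic blowups of the flow), not merely alluded to.
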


\begin{proof} 
Let $n$ be a positive integer such that $\alpha + n \beta$
is a primitive integral class. 
Let $S_\alpha$ and $S_\beta$ be embedded surfaces in $M$ which represent $\alpha$ and $\beta$ respectively. 
Note that their orientations are assigned,  
and each connected component of those
surfaces has genus at least 2, since $M$ is a closed hyperbolic $3$-manifold. 
In what follows, we explain how to choose these representatives more explicitly.

For any primitive integral class in  $\mathscr{C}$, 
one obtains a suspension flow $\mathcal{F}$ of the monodromy. 
Fried showed that when $M$ is  a closed hyperbolic fibered 3-manifold, 
the flow $\mathcal{F}$ is invariant of $\mathscr{C}$ in the following sense: 
if one considers the suspension flows from two 
primitive integral classes in $\mathscr{C}$, then 
they are the same flow up to reparametrization and conjugation by homeomorphisms on $M$. 
Moreover Fried showed that if an embedded surface $S$ in $M$ is a fiber for a primitive integral  class in $\mathscr{C}$, then 
$S$ can be made transverse to $\mathcal{F}$, and the first return map along the flow $\mathcal{F}$
represents the monodromy  
(see \cite{fried1982geometry}, and Theorem 14.11, Lemma 14.12 in \cite{FLP}).

Surely $S_\alpha$ can be made transverse to  $\mathcal{F}$, since $\alpha \in \mathscr{C}$. 
If $\beta \in  \mathscr{C}$, then the same holds for $S_{\beta}$. 
However if  $\beta \in \partial \mathscr{C} = \overline{\mathscr{C}} \setminus  \mathscr{C}$, 
then this may or may  not be possible for representatives  of $\beta$.  
The transverse surface theorems by Mosher \cite{mosher1991surfaces} and Landry \cite{Landry19} 
including the case of compact hyperbolic $3$-manifolds tells us that, 
for any integral class $\beta \in \overline{\mathscr{C}}$, 
there exists a flow $\hat{\mathcal{F}}$ which is semi-conjugate to $\mathcal{F}$ 
so that a representative $S_{\beta}$ of $\beta$ is transverse to $\hat{\mathcal{F}}$.
Here $\hat{\mathcal{F}}$ is obtained from $\mathcal{F}$ by using the dynamic blow up of some (possibly empty) singular periodic orbits 
of  $\mathcal{F}$. 
The flow $\hat{\mathcal{F}}$ is called a dynamic blow up of $\mathcal{F}$ for $\beta \in \overline{\mathscr{C}}$.  
(The dynamic blowups of $\mathcal{F}$  may not be unique.) 
For more details of the dynamic blow up of singular orbits, 
see \cite[p.8-9]{mosher1991surfaces}, \cite[Section 3.1]{Landry19}. 

We now explain some relevant properties of $\hat{\mathcal{F}}$ which are needed in the proof of Proposition \ref{lem:seq}. 
The new flow $\hat{\mathcal{F}}$ is obtained from $\mathcal{F}$ by replacing the singular orbits 
of $\mathcal{F}$ by a set of annuli such that flow lines in the interior of each annulus spiral toward boundary components of the annulus. 
Moreover 
$S_\alpha \cap \mathcal{A}$ is a union of embedded trees in $S_\alpha$, 
where $\mathcal{A}$ is the collection of annuli created during the finitely many blowups of singular orbits. 
When $\beta \in \mathscr{C}$, 
it is shown in the transverse surface theorem that $\hat{\mathcal{F}}$ is obtained from $\mathcal{F}$ along empty periodic orbits, and hence 
$\hat{\mathcal{F}}$ is the same as  $\mathcal{F}$.  
Now $S_{\beta}$ is transverse to $\hat{\mathcal{F}}$. 
From the construction of $\hat{\mathcal{F}}$, we may suppose that $S_\alpha$ is still transverse to $\hat{\mathcal{F}}$.  

For any positive integer $n$, we can consider  $n$ parallel copies of $S_\beta$, say $S_1, \ldots, S_n$ 
such that $S_i$'s are very close to each other. Whenever we are in this situation, 
the $n$ copies $S_i$'s are labeled so that for $1 \le i < n$, 
$S_i$ gets mapped to $S_{i+1}$ by the flow $\hat{\mathcal{F}}$ before
touching any other $S_j$. Note that $n$ is not fixed. 

We now describe the surgery, i.e., cut and paste on $S_\alpha, S_1, \ldots, S_n$ along the intersection locus
to get a surface $S$ which represents  $\alpha + n \beta$. 
Along each component of the intersection between $S_\alpha$
and each copy of $S_\beta$, we cut those surfaces. Locally there are
four sheets of surfaces, two from $S_\alpha$ and two from the copy of $S_\beta$. 
Glue one sheet from $S_\alpha$ to one sheet from $S_\beta$ so that the
orientations on those sheets match up. One can do the same for the
other two remaining sheets. 
The resulting surface $S$ represents  $\alpha + n \beta$. 
Clearly $S$ is  transverse to $\hat{\mathcal{F}}$. 

Note that we may assume that $S_\alpha \cap S_i$ is disjoint from
the above $\mathcal{A}$. To see this, first note that there are open disks $U$'s 
around singularities (of the unstable foliation for the pseudo-Anosov monodromy $\psi_{\alpha}$) 
on $S_\alpha$ so that $ S_\alpha \cap \mathcal{A}$ is contained in the union of the disks $U$'s. 
Now we perturb $S_i$ so that their intersection does not meet $U$'s. 
For each such disk $U$, take a disk $V$ slightly bigger than $U$ so that the closure of $U$ is contained in $V$ and $V$'s are pairwise disjoint. 
Since $S_\alpha$ is transverse to $\hat{\mathcal{F}}$, 
we may consider the small open subset of $M$ which is an I-bundle over $V$ whose fibers are segments of the flow lines of $\hat{\mathcal{F}}$. 
There is a `horizontal direction' in this I-bundle, since one can consider
the foliation of the I-bundle by the disks parallel to the disk $V$ in $S_\alpha$. 
One can consider a homotopy supported in the closure of this open I-bundle
which pushes $S_i$ along the horizontal direction so that the homotoped $S_i$ avoids the original disk $U$. 
Since the closure of $U$ is contained in $V$ and we have parallel copies of $S_i$ which are very close to each other, 
this homotopy can be applied to all of them simultaneously. 

Since $S_\alpha \cap S_i$ is disjoint from $\mathcal{A}$, the surgery does not affect the trees in $S_\alpha \cap \mathcal{A}$. 
Note that the original suspension flow $\mathcal{F}$ can be recovered from $\hat{\mathcal{F}}$ 
simply by collapsing each annulus in $\mathcal{A}$ to a closed orbit.  

Since $\alpha+ n \beta = [S] \in \mathscr{C}$, 
the surface $S$ can be made transverse to the original suspension flow $\mathcal{F}$. 
Now let $\hat{\Psi}$ and $\Psi$ be the first return maps on $S$ for $\hat{\mathcal{F}}$ and $\mathcal{F}$, respectively. 
Since $\hat{\Psi}$ and $\Psi$ differ only on the trees and each tree is contractible, 
$\hat{\Psi}$ and $\Psi$ are clearly homotopic to each other. Therefore
$\hat{\Psi}$ represents the monodromy  $\psi = [\Psi]$ for $\alpha+ n \beta$. 

Note that because all $S_i$ are parallel copies of $S_\beta$, any curve or
region on $S_\beta$ gives rise to a curve or region on each of the
$S_i$ that are parallel to it. 
Hence, in what follows, whenever we specify any multicurve on $S_\beta$ we implicitly specify multicurves on all of the $S_i$ which are parallel to each other.

Let $C$ be a multicurve on $S_\beta$, 
such that all the connected components of  $S_\beta \backslash C$ have genus $0$ with three ends (Figure \ref{fig_3valent_2}(1)). 
Furthermore, we assume that every intersecting curve between $S_\alpha$ and $S_\beta$ is parallel to one of the curves in $C$. 
Such a multicurve $C$ always exists. 
To construct one, group the intersecting curves between $S_\alpha$ and $S_\beta$ into parallel families, 
choose one in each parallel family and use them to form a multicurve $C'$. 
Now, if some connected component of  $S_\beta\backslash C'$ has genus greater than $0$, or has more than three ends, 
then we can add an extra curve to $C'$ to break it into components of lower complexity, 
and repeat this process until all the connected components of $S_{\beta} \backslash C'$ have genus $0$ with three ends.

\begin{center}
\begin{figure}[t]
\includegraphics[height=3.6cm]{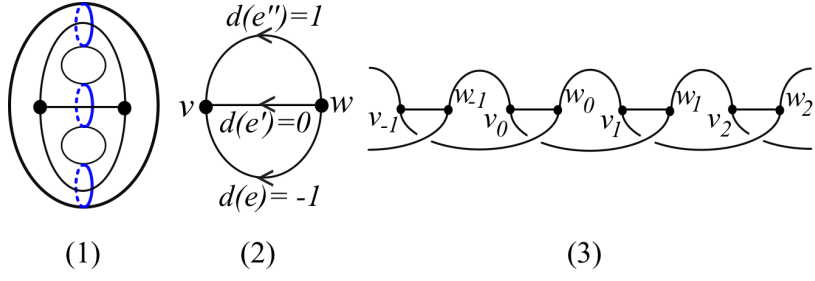}
\caption{(1) A multicurve $C$ together with its 3-regular graph $G$ 
on $S_ {\beta} \simeq $ closed surface of genus $2$. 
(2) An example of a cochain $d$ on $G$: 
for three edges from $w$ to $v$, their values are $-1$, $0$, $1$ respectively. 
(3) ${\Bbb Z}$-fold cover $G'$ corresponding to $d$ of (2).}  
\label{fig_3valent_2}
\end{figure}
\end{center}

Now we make use of the graph theoretic lemma below.

\begin{lem}
\label{lem_3regular}
Let $G$ be a 3-regular finite graph. Let $d$ be an integer valued cellular cochain on $G$ whose value on each edge is bounded above by $k \ge 0$, 
and let $G'$ be the $\mathbb{Z}$-fold cover constructed from $d$ (i.e. the vertices of $G'$ are $\mathbb{Z}$-copies of the vertices of $G$ and each edge $e$ in $G$ 
from $w$ to $v$ is lifted to edges from the $j$th lift of $w$ to the $(j+d(e))$th lift of $v$, 
see Figures \ref{fig_3valent_2}(2) and \ref{fig_3valent_2}(3)). 
Then there is some $R$ depending only on $k$ and the number of edges $|E(G)|$ of $G$ such that 
$G'$ has a simple loop $\gamma'$ of length no more than $2R$.
\end{lem}

\begin{proof}
Suppose there are no such loops of length less than $2R$ in $G'$ for any $R$. 
Then the $R$-neighborhood
(i.e., neighborhood with radius $R$ assigning each edge length 1)
of any vertex $v_0$ in $G'$ must be a 3-valence tree.
Hence it contains $3\times (2^R-1)$ edges. 
However, such a neighborhood must contain at most $(2Rk+1)|E(G)|$ edges. 
(This is because in $R$ steps, one can travel up at most $Rk$ levels, i.e., 
$Rk$ copies of the fundamental domain, 
or travel down at most $Rk$ levels. 
Together with the original level, there are $(2Rk+1)$ levels in total that one might be able to pass through, 
and hence there are at most  $(2Rk+1)|E(G)|$ edges in them.) 

Since the exponential function grows faster than the linear one, one can set $R$ sufficiently large to reach a contradiction.
\end{proof}

\begin{center}
\begin{figure}[t]
\includegraphics[height=2.7cm]{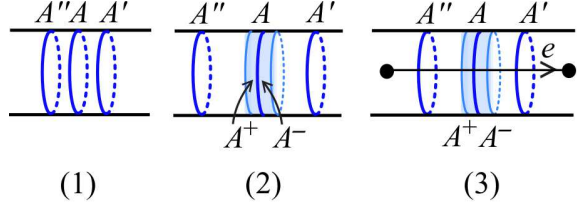}
\caption{(1) Parallel curves on $S_{\beta}$ which are some components of the intersection between $S_{\alpha}$ and $S_{\beta}$. 
(2) Annular neighborhood of $A$ and the side of $A^{\pm}$. 
(3) For an edge $e$ starting from the side of  $A^+$, $A$ contributes to $d(e)$ by $+1$.}   
\label{fig_cochain}
\end{figure}
\end{center}

We continue the proof of Proposition \ref{lem:seq}.
Note that the multicurve $C$ above gives a pants decomposition of $S_\beta$.
Let $G$ be the 3-regular graph where each vertex corresponds to a pair of pants in the pants decomposition of $S_{\beta}$, 
and each edge corresponds to the component of the multicurve between two pairs of pants. 
(See Figure \ref{fig_3valent_2}(1).)
Now we define the cochain $d$ on $G$ which only depends on $S_{\alpha}$ and $S_{\beta}$ 
as follows. (See Figure \ref{fig_cochain}.)

Consider the surface $S$ obtained from the cut and paste construction of $S_ {\alpha}$ and $n$ copies of $S_{\beta}$. 
If a curve $A$ is one component of the intersection between 
$S_\alpha$ and $S_{\beta}$, we cut $S_\beta$ along $A$ (hence we cut each copy of $S_{\beta}$ along a curve corresponding to $A$) 
which results in two boundary curves for each copy of $S_{\beta}$, say $A^+$ and $A^-$. The labeling $A^+$ and $A^-$ are determined as follows: 
in the surface obtained from $S_\alpha$ and the copies of $S_\beta$ via the cut and paste construction, 
an annular piece of $S_\alpha$ connecting the $i$th copy of $S_\beta$ to the $(i+1)$th copy of $S_\beta$ is attached to 
the $i$th copy of $S_\beta$ along $A^+$ (the index of each copy of
$S_{\beta}$ is understood as an integer modulo $n$). We label the
other boundary componet $A^-$.  

Now the labeling on each copy of $S_\beta$ is well-defined, and if one
considers an annular neighborhood of $A$, 
then one can make sense of that one side is the side of $A^+$ 
and the other side is the side of $A^-$. 

Let us consider an edge $e$ on $G$ which intersects the curve $A$.  
If $e$ is with the orientation so that it starts from the side of $A^+$ and go over the side of $A^-$, 
then $A$ contributes to $d(e)$ by $+1$, and $A$ contributes to $d(e^{-1})$ by $-1$, 
where $e^{-1}$ is the same edge as $e$ with the opposite orientation. 
The number $d(e)$ is obtained by summing up all the contributions of curves in $S_\alpha\cap S_\beta$ that the edge $e$ passes through. 
Note that the cochain $d$ does not depend on $n$ but only on $S_\alpha$ and $S_\beta$, 
since we consider copies of $S_{\beta}$ very close to each other, the intersection with 
$S_\alpha$ looks exactly the same in any copy of $S_{\beta}$.

Let $k$ be the maximum of the values of $d$ on all edges on $G$, 
and let $R$ be the constant from Lemma \ref{lem_3regular}. Now let $n$ be any integer so that  $n \geq 2Rk+2$, and 
consider the surface $S$ obtained from $S_\alpha$ and $n$ copies of $S_\beta$ by a cut and paste construction. 
(In other words, here we will argue that the integer $n_0$ in  Proposition \ref{lem:seq} can be chosen as $2Rk+2$.) 
Let $\gamma'$ be a simple loop in $G'$ in Lemma \ref{lem_3regular}. 
The fact that $|d(e)|\leq k$ implies that $\gamma'$ passes through 
at most $2Rk+1$ consecutive fundamental domains of the deck group action on $G'$. 
The embedding of these $2Rk+1$ fundamental domains, together with one more, to $2Rk+2$ copies of $S_\beta$ 
after the surgery, sends $\gamma'$ to some simple loop $\gamma$ on the surface $S$. 

Let $c \in C$ be a component of the multicurve on $S_\beta$ and let $c_i$ be the corresponding copies of $c$ 
on the $i$th copy $S_i$ of $S_\beta$. 
Suppose that $c$ is chosen such that 
$c_l$ is crossed by $\gamma$ once for some $l$,
and that $\gamma$ does not cross the lowest copy $S_1$ (see Figure \ref{figure:nonhomologous}). 
One can choose such $c$, since the length of $\gamma'$ is no more than $2R$. 
Note that all $c_i$ survives under surgery because they do not cross 
the intersections between $S_i$ and $S_\alpha$. Furthermore, except for the top $c_n$, their images under the first return map are
$\psi(c_i)=c_{i+1}$. 
By construction of $S$, 
it follows that 
$c_1, \psi(c_1)= c_2, \cdots, \psi^{n-1}(c_1)= c_n$ are disjoint. 
For the proof of Proposition \ref{lem:seq}, 
we only need to show that $[c_2]\pm [c_1]$ is not homologous to $0$. 
(This also implies that $c_1$ on $S$ is essential.)
To do so, one only needs to show that 
$$(\psi_*^{l-2}+\psi_*^{l-3}+\dots + \mathrm{id}_*)([c_2]-[c_1])=[c_l]-[c_1]$$
and
\[(\psi_*^{l-2}-\psi_*^{l-3}+\dots +(-1)^{l-2}\mathrm{id}_*)([c_2]+[c_1])=[c_l]+(-1)^{l-2}[c_1]\]
are not $0$.
Since $\gamma$ passes through $c_l$ and it does not pass through $c_1$, 
simple closed curves $c_l$ and $c_1$ do not bound a subsurface.
Therefore $[c_l]\not=\pm[c_1]$.
This completes the proof of Proposition \ref{lem:seq}. 
\end{proof}

\begin{figure} 
  \begin{tikzpicture}
    \draw[dotted] (0,5)--(2,5);
    \draw[dotted](2,0)--(0,0);
    \draw[dotted] (0,0.5)--(2,0.5);
    \draw[dotted] (0,1)--(2,1);
    \draw[->] (0.5,3)--(0.5,4.8)--(1.5,4.8)--(1.5, 0.4)--(0.5, 0.4)--(0.5,3);
    \node at (2.3,0.1) {$S_1$};
    \node at (2.3,3.5) {$S_l$};
    \draw[-, very thick] (0.4,0)--(0.6,0);
    \draw[-, very thick] (0.4, 3.5)--(0.6,3.5);
    \draw[dotted](0, 3.5)--(2, 3.5);
    \node at (0.5, -0.2) {$c_1$};
    \node at (0.3, 3.5) {$c_l$};
    \node at (1.7, 2.5) {$\gamma$};
  \end{tikzpicture}
  \caption{The horizontal line segments (with dots) represent the copies $S_1, S_2, \cdots$ of $S_\beta$, 
  and the curve with arrow represents the loop $\gamma$ which passes through $S_l$ but not the lowest copy $S_1$ of $S_{\beta}$.}
  \label{figure:nonhomologous}
 \end{figure}
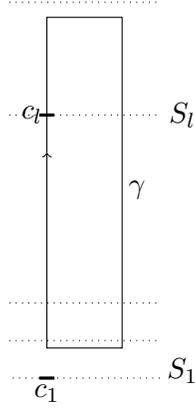

We now consider a compact hyperbolic fibered 3-manifold $M$. 
In order to obtain an estimate for the asymptotic translation length of monodromies from the arithmetic sequences in the fibered cone for $M$, 
we show the following variant of Proposition \ref{lem:seq}. 

\begin{prop}
\label{lem:seq2} 
Let $\mathscr{C}$ be a fibered cone  for a compact hyperbolic fibered 3-manifold $M$ possibly with boundary.  
Let  $\alpha \in \mathscr{C}$  and $\beta \in \overline{\mathscr{C}}$ 
be  integral classes. 
Suppose $(S, \psi) = \alpha + n\beta \in  \mathscr{C}$ is a primitive integral class for an integer $n\ge 2$. 
  Then there is an essential simple closed curve $c$ on $S$ or essential arc on $S$ 
  so that  $c, \psi(c), \cdots, \psi^{n-1}(c)$ are disjoint.  
  In particular we have 
  $$\ell_\mathcal{\mathcal{C}}(\psi) \leq \dfrac{2}{n-1}.$$
\end{prop}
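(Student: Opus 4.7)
The plan is to run the cut-and-paste construction from the proof of Proposition \ref{lem:seq} to produce $S$ representing $\alpha + n\beta$ and its monodromy $\psi$, and then find the desired curve or arc $c$ by a direct Euler characteristic count, bypassing the graph-theoretic lemma. First, I would use the Mosher--Landry transverse surface theorem to choose $S_\alpha$ transverse to the suspension flow $\mathcal{F}$ and $S_\beta$ transverse to a dynamic blowup $\hat{\mathcal{F}}$, taking $S_\beta$ to be a Thurston norm minimizing representative of $\beta$ so that $S_\beta$ is incompressible and $\partial$-incompressible in $M$. Performing the cut-and-paste on $S_\alpha$ together with $n$ parallel copies $S_1,\ldots,S_n$ of $S_\beta$ yields $S$ transverse to $\mathcal{F}$, with the first return map representing $\psi$.

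Next, I would observe that $S_\beta \setminus (S_\alpha \cap S_\beta)$ always has a non-disk component: $\chi(S_\beta) = -\|\beta\| \le -1$, whereas a decomposition of $S_\beta$ into disks alone would force $\chi(S_\beta) \ge 1$. Picking such a non-disk component $P$, it supports either an essential simple closed curve or, when $P$ is planar with $\partial P \cap \partial S_\beta \ne \emptyset$, an essential arc with endpoints in $\partial S_\beta$; call this $c$. Since $c \subset S_\beta$ is disjoint from $S_\alpha \cap S_\beta$, the $n$ parallel copies $c_1,\ldots,c_n$ in $S_1,\ldots,S_n$ survive the surgery unchanged, are pairwise disjoint in $S$, and the first return map sends $c_i$ to $c_{i+1}$ for $1 \le i \le n-1$. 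To verify essentialness of $c_1$ in $S$, I would use incompressibility twice: the norm-minimizing $S_\beta$ is $\pi_1$-injective and $\partial$-$\pi_1$-injective into $M$, so $c$ essential in $S_\beta$ remains essential in $M$; the fiber $S$ is likewise $\pi_1$-injective and $\partial$-$\pi_1$-injective into $M$, so essentialness descends from $M$ back to $S$.

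For the bound on $\ell_{\mathcal{C}}(\psi)$, the disjointness of $c_1$ and $\psi^{n-1}(c_1)$ gives $d_{\mathcal{C}(S)}(c_1, \psi^{n-1}(c_1)) \le 2$ via the $\psi$-equivariant $2$-Lipschitz projection from the arc and curve complex to the curve complex (needed because $c$ may be an arc when $M$ has nonempty boundary); iterating yields $d_{\mathcal{C}(S)}(c_1, \psi^{k(n-1)}(c_1)) \le 2k$ for every $k$, and therefore $\ell_{\mathcal{C}}(\psi) \le 2/(n-1)$. The main subtlety I anticipate is precisely the essentialness step: when $\beta \in \partial \mathscr{C}$ the representative $S_\beta$ need not be a fiber for any fibration, so fiber-incompressibility for $S_\beta$ is not available directly, forcing one to route essentialness through $M$ via norm-minimization. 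Everything else is a strict simplification of the proof of Proposition \ref{lem:seq}, since we no longer need the homological condition on $[c_2] \pm [c_1]$ and can therefore bypass the graph $G$, the cochain $d$, and Lemma \ref{lem_3regular} entirely.
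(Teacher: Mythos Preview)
The paper's proof also dispenses with the graph-theoretic lemma, but it chooses $c$ differently: rather than hunting for a curve or arc in a complementary piece of $S_\beta \setminus (S_\alpha \cap S_\beta)$, it takes $c$ to be one of the intersection curves or arcs in $S_\alpha \cap S_\beta$ itself, after first arranging $S_\alpha$ and $S_\beta$ to intersect minimally. Essentialness of such a $c$ in both $S_\alpha$ and $S_\beta$ then follows directly from minimality (cf.\ \cite[Lemma~5.8]{calegari2007foliations}), and essentialness in $S$ is read off from the cut-and-paste construction. This sidesteps both of the issues below.

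Your Euler characteristic step has a genuine gap in exactly the case this proposition is meant to address. When $\partial M \ne \emptyset$, the intersection $S_\alpha \cap S_\beta$ may contain properly embedded arcs, and cutting $S_\beta$ along an arc \emph{raises} $\chi$ by $1$. If there are $a$ such arcs, the complementary pieces $P_i$ satisfy $\sum_i \chi(P_i) = \chi(S_\beta) + a$, which can be $\ge 1$ even with $\chi(S_\beta) < 0$; for instance a pair of pants cut along two arcs is a single hexagonal disk. So the assertion ``all pieces disks forces $\chi(S_\beta) \ge 1$'' is false, and in fact $S_\alpha \cap S_\beta$ can fill $S_\beta$, leaving no essential curve or arc in its complement at all. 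A secondary gap: you never arrange for $S_\alpha$ and $S_\beta$ to intersect minimally, so even when a non-disk piece $P$ exists, the curve you extract from it need not be essential in $S_\beta$ (e.g.\ $P$ an annulus whose boundary curves bound disks in $S_\beta$), and then the incompressibility route through $\pi_1(M)$ never gets started.
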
 

\begin{proof} 
Let $\mathcal{F}$ be the suspension flow for the fibered cone $ \mathscr{C}$. 
In \cite[Appendix A]{Landry19}, Landry generalized Fried's theory on the fibered cone (for closed hyperbolic fibered 3-manifolds) 
to the case of compact  hyperbolic fibered 3-manifolds $M$ possibly with boundary. 
In particular $\mathcal{F}$ is invariant of $ \mathscr{C}$ as well. 
Then we use the transverse surface theorem \cite{mosher1991surfaces,Landry19} for  compact  hyperbolic fibered 3-manifolds $M$ again. 
Let 
$\hat{\mathcal{F}}$ be a dynamic blow up of $\mathcal{F}$ for
$\beta \in \overline{\mathscr{C}}$. 
We can take representatives $S_\alpha$ and $S_\beta$ of $\alpha$ and $\beta$ respectively 
so that $S_{\alpha}$ and $S_{\beta}$ are transverse and 
they intersect the new flow $\hat{\mathcal{F}}$ transversely. 
We may assume that 
$S_{\alpha}$ and $S_{\beta}$ intersect minimally, i.e., 
the number of components of the intersection between $S_ {\alpha}$ and $S_{\beta}$ is minimal 
among all representatives of $\alpha$ and $\beta$. 
The surface obtained from $S_{\alpha}$ and $S_{\beta}$ by a cut and paste construction is a fiber of the fibration associated with $\alpha+ \beta \in \mathscr{C}$. 
This implies that $S_{\alpha}$ and $S_{\beta}$ are minimal representatives of $\alpha$ and $\beta$. 
Do surgery at the intersection locus of $S_\alpha$ and $n$ copies of $S_\beta$ to obtain a surface $S$ representing $\alpha+n\beta$. 
We now find the desired essential simple closed curve on $S$ or an essential arc $c$ on $S$. 
Let $c$ be one of the intersection curves or arcs 
between $S_\alpha$ and $S_{\beta}$, and 
let $S_1$ be the the lowest copy of $S_{\beta}$.  
The fact that $c$ is essential on $S_\alpha$ and on $S_\beta$ follows 
from the fact that the intersection between $S_\alpha$ and $S_{\beta}$ is minimal 
(see \cite{Thurston86} or \cite[Lemma 5.8]{calegari2007foliations}).  
It is not hard to see from the cut and past construction that $c$ is also essential on $S$. 

From the choice of $c$, it follows that $c$ and $\psi^{n-1}(c)$ are disjoint. 
They are distinct in the arc and curve complex $\mathcal{AC}(S)$, since $\psi$ is pseudo-Anosov. 
Thus the distance between $c$ and $\psi^{n-1}(c)$ in $\mathcal{AC}(S)$ equals $1$.  
This implies that $ (n-1)\ell_\mathcal{AC}(\psi) =  \ell_\mathcal{AC}(\psi^{n-1}) \leq 1$ (cf.  \cite[Lemma 2.1]{KinShin18}), 
where $\ell_\mathcal{AC}(\psi)$ is the asymptotic translation length of $\psi$ on $\mathcal{AC}(S)$. 
It is known  that the inclusion map $\C(S) \ra \AC(S)$ is 2-bilipschitz (see, for instance, \cite[Lemma 2.2]{MasurMinsky00} or \cite{KorkmazPapadopoulos10}). 
In particular, this tells us that 
$$\ell_\mathcal{\mathcal{C}}(\psi) \le 2 \ell_\mathcal{\mathcal{AC}}(\psi).$$
Thus we have 
$\ell_\mathcal{\mathcal{C}}(\psi) \le 2 \ell_\mathcal{\mathcal{AC}}(\psi) \le \frac{2}{n-1}$. 
This completes the proof. 
\end{proof}

\begin{remark}
\label{rem_BSW}
In \cite{BaikShinWu18}, Theorem \ref{thm:BaikShinWu} was proved 
in the case of closed hyperbolic fibered 3-manifolds. 
We note that almost the same proof can be adopted to the case of compact hyperbolic fibered 3-manifold. 
In fact, one only needs to modify the last paragraph (after Lemma 8) in the proof of Theorem 5 in  \cite{BaikShinWu18}
to allow $\gamma$ and $\gamma'$ to be either an essential simple closed curve or an essential simple arc. 
Then one obtains the same conclusion of Theorem \ref{thm:BaikShinWu} 
by the fact that  inclusion map $\C(S) \ra \AC(S)$ is 2-bilipschitz as 
in the proof of Proposition \ref{lem:seq2} in this paper. 
\end{remark}

\medskip
%
%
\section{Applications of arithmetic sequences} 
%
%
\medskip
\subsection{Asymptotic translation lengths in fibered cones}	\label{section:asymptotictranslationlengths}

In this section, we show the following estimate for the asymptotic translation lengths in the curve complexes.

\begin{thm} \label{thm:newbound} 
Let $F$ be a fibered face of a compact hyperbolic fibered 3-manifold possibly with boundary.
Then there exists a constant $C$ depending on $F$ such that 
for any primitive integral class $(S, \psi) \in \mathscr{C}_F$, 
we have
$$\ell_{\C} (\psi) \leq \frac{C}{|\chi(S)|}.$$
\end{thm}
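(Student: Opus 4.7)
The plan is to reduce Theorem \ref{thm:newbound} to Proposition \ref{lem:seq2}. It will suffice to show that every primitive integral $(S,\psi) \in \mathscr{C}_F$ admits a decomposition $(S,\psi) = \alpha + n\beta$ with $\alpha$ an integral class in $\mathscr{C}_F$, $\beta$ an integral class in $\overline{\mathscr{C}_F}$, and $n \geq c\,|\chi(S)|$ for some constant $c > 0$ depending only on $F$. Proposition \ref{lem:seq2} then gives $\ell_\C(\psi) \leq 2/(n-1) = O(1/|\chi(S)|)$ whenever $|\chi(S)|$ is large enough that $n \geq 2$, and the finitely many primitive integral classes of bounded Thurston norm are absorbed into the constant.

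To produce the decomposition I will first fix auxiliary data depending only on $F$: a rational point $p$ in the relative interior of $F$, the primitive integral class $\gamma_p$ on the ray through $p$, and primitive integral classes $\beta_1,\dots,\beta_k$ on the rays through the vertices $v_1,\dots,v_k$ of $F$. I then take a star triangulation of $F$ centered at $p$: rationally triangulate $\partial F$ using the $v_i$ and cone each boundary simplex off to $p$. Every top-dimensional simplex in the resulting triangulation has the form $\Delta = \mathrm{conv}(p, v_{j_1},\dots,v_{j_d})$ with $d = \dim F$, and its face opposite $p$ is the only face of $\Delta$ lying in $\partial F$.

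Given $(S,\psi) \in \mathscr{C}_F$, its ray meets a unique such $\Delta$, yielding a unique non-negative decomposition $(S,\psi) = c_0 \gamma_p + \sum_{i=1}^d c_i \beta_{j_i}$ with $c_0 > 0$. Linearity of the Thurston norm on $\overline{\mathscr{C}_F}$ gives $|\chi(S)| = c_0\|\gamma_p\| + \sum_i c_i \|\beta_{j_i}\|$, so, with $B = \max(\|\gamma_p\|,\|\beta_1\|,\dots,\|\beta_k\|)$, at least one of $c_0, c_1,\dots,c_d$ is $\geq |\chi(S)|/((d+1)B)$. If that maximum is some $c_{i_0}$ with $i_0 \geq 1$ I take $\beta := \beta_{j_{i_0}}$ and $n := \lfloor c_{i_0} \rfloor$; if the maximum is $c_0$ I take $\beta := \gamma_p$ and $n := \lfloor c_0 \rfloor - 1$. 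In either case $\alpha := (S,\psi) - n\beta$ is an integral class and $n \geq |\chi(S)|/((d+1)B) - 2$, which grows linearly in $|\chi(S)|$.

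The point that genuinely requires care is verifying $\alpha \in \mathscr{C}_F$ (the \emph{open} cone), since this is the one hypothesis of Proposition \ref{lem:seq2} that is not automatic. By construction $\alpha$ has a strictly positive $\gamma_p$-coefficient in its simplicial decomposition in both cases, so its projection to $F$ avoids the face of $\Delta$ opposite $p$; since that is the only face of $\Delta$ meeting $\partial F$, the projection lies in the interior of $F$, and hence $\alpha \in \mathscr{C}_F$. This is precisely what the star triangulation from an interior point is designed to arrange: without it one would be forced into a case analysis on which boundary faces of $F$ the simplex $\Delta$ abuts, and I expect this openness verification to be the main technical point of the proof.
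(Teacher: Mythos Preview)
Your argument is correct and follows the same overall strategy as the paper: reduce to Proposition~\ref{lem:seq2} by exhibiting, for every primitive integral $\delta\in\mathscr{C}_F$, a decomposition $\delta=\alpha+n\beta$ with $\alpha\in\mathscr{C}_F$, $\beta\in\overline{\mathscr{C}_F}$ integral, and $n$ bounded below by a constant times $|\chi(S)|$. The difference is in how this decomposition is produced.

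The paper invokes a lattice-theoretic lemma (Lemma~\ref{lem:cone}): the monoid $\overline{\mathscr{C}_F}\cap\mathbb{Z}^m$ is finitely generated by some set $\Omega$, and there is a finite set $\Omega_0\subset\mathscr{C}_F\cap\mathbb{Z}^m$ such that every integral interior point is of the form $a+\sum_{b\in\Omega}k_b b$ with $a\in\Omega_0$ and $k_b\in\mathbb{Z}_{\ge 0}$. One then takes $\beta$ to be the generator with largest coefficient and $\alpha$ to be the remainder; interiority of $\alpha$ is automatic because $a\in\Omega_0$ already lies in the open cone, and all coefficients are integers from the start.

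Your approach replaces this by a geometric star triangulation from a rational interior point $p$, writes $\delta$ as a real nonnegative combination of $\gamma_p$ and the vertex classes, and floors the largest coefficient. Interiority of $\alpha$ is then ensured by the positivity of the $\gamma_p$-coefficient together with the standard convexity fact that a point of the form $\lambda_0 p+(1-\lambda_0)q$ with $p\in int(F)$, $q\in\overline{F}$, $\lambda_0>0$ lies in $int(F)$; this is the precise content of your claim that $\Delta\cap\partial F$ equals the face opposite $p$. (Two cosmetic points: the simplex $\Delta$ containing the projection need not be unique when the projection lands on a face of the triangulation, and ``the only face of $\Delta$ meeting $\partial F$'' should be read as ``$\Delta\cap\partial F$ is exactly that face''; neither affects the argument.)

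The trade-off: the paper's monoid lemma gives integer coefficients directly and packages the interiority check into the construction of $\Omega_0$, at the cost of citing a nontrivial (if classical) result on affine semigroups. Your triangulation argument is more hands-on and avoids that citation, at the cost of passing through real coefficients and a flooring step. Both routes yield the same linear-in-$|\chi(S)|$ lower bound on $n$ and feed identically into Proposition~\ref{lem:seq2}.
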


To prove this theorem, we need the following lemma about rational cones. Here a rational cone in Euclidean space $\mathbb{R}^m$ 
is the set of the points of the form 
$$\{ {\bf x}= (x_1, \ldots, x_m) \in \mathbb{R}^m : A {\bf x}^{t} \geq \textbf{0}\}$$ 
for some $k \times m$ matrix $A$ with integer entries. 
(${\bf x}^{t}$ is the transpose of ${\bf x}$.) 
We further assume that this set has nonempty interior.  

\begin{lem}\label{lem:cone}
Let $P$ be a rational cone in $\mathbb{R}^m$, and let $int(P)$ be its interior. 
Then there exist two nonempty finite sets $\Omega_0 \subset int(P) \cap\mathbb{Z}^m$ and $\Omega \subset P\cap \mathbb{Z}^m$
so that
 $$int(P) \cap\mathbb{Z}^m=\{a+\sum_{b\in \Omega} k_bb: a\in\Omega_0, k_b\in\mathbb{Z}, k_b\ge 0\}.$$
\end{lem}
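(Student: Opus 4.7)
The plan is to interpret this as a shifted version of Gordan's lemma on rational polyhedral cones. First I would note that $P$ is defined by finitely many integer-coefficient inequalities $\ell_1(x) \ge 0, \ldots, \ell_k(x) \ge 0$, one per facet. Since each $\ell_j$ takes integer values on $\mathbb{Z}^m$, the strict inequalities $\ell_j(v) > 0$ are equivalent to $\ell_j(v) \ge 1$, and hence
$$int(P) \cap \mathbb{Z}^m = \{v \in \mathbb{Z}^m : \ell_j(v) \ge 1 \text{ for all } j\} = Q \cap \mathbb{Z}^m,$$
where $Q := \{x \in \mathbb{R}^m : \ell_j(x) \ge 1 \text{ for all } j\}$ is a rational polyhedron with recession cone equal to $P$. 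The key structural observation is that $Q + P \subseteq Q$, so $Q \cap \mathbb{Z}^m$ is automatically a module over the additive semigroup $P \cap \mathbb{Z}^m$; the lemma is the statement that this module is finitely generated.

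Next I would invoke two classical tools. Gordan's lemma produces a finite Hilbert basis $\Omega = \{b_1, \ldots, b_s\} \subset P \cap \mathbb{Z}^m$ which generates $P \cap \mathbb{Z}^m$ as a semigroup and generates $P$ as a convex cone. The Minkowski--Weyl decomposition writes $Q = B + P$, where $B$ is the compact convex hull of the (rational) vertices of $Q$. Nonemptiness of $\Omega$ and of $Q \cap \mathbb{Z}^m = int(P) \cap \mathbb{Z}^m$ follows from the hypothesis that $P$ is full-dimensional: any rational point in $int(P)$ can be cleared of denominators by an integer scaling to produce an integer point in $int(P)$, and likewise $P \cap \mathbb{Z}^m$ contains nonzero vectors.

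The main construction is then a standard fractional-part trick. Given $v \in Q \cap \mathbb{Z}^m$, choose a decomposition $v = b + \sum_{i=1}^s \lambda_i b_i$ with $b \in B$ and $\lambda_i \ge 0$, and split off the integer parts:
$$v = u + \sum_{i=1}^s \lfloor \lambda_i \rfloor b_i, \qquad u := b + \sum_{i=1}^s \{\lambda_i\} b_i.$$
The remainder $u$ lies in the bounded region $B + \{\sum t_i b_i : 0 \le t_i \le 1\}$, is an integer vector since $u = v - \sum_i \lfloor \lambda_i \rfloor b_i$ is a $\mathbb{Z}$-combination of integer vectors, and lies in $Q$ because $b \in B \subseteq Q$ and $\sum_i \{\lambda_i\} b_i \in P$ and $Q + P \subseteq Q$. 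Defining $\Omega_0$ to be the finite set of integer points of $Q$ inside this bounded region gives a finite subset of $int(P) \cap \mathbb{Z}^m$, and the displayed identity is exactly the desired decomposition $v = a + \sum_b k_b b$ with $a \in \Omega_0$ and $k_b \in \mathbb{Z}_{\ge 0}$.

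The main obstacle, I expect, is resisting the temptation to write $int(P) \cap \mathbb{Z}^m$ as a single translate $v_0 + (P \cap \mathbb{Z}^m)$, which already fails whenever $P$ has more than one facet; the introduction of the shifted polyhedron $Q$ and the Minkowski--Weyl vertex hull $B$ is exactly what captures the finitely many "extra" module generators forced by the facets. The one step that genuinely requires care is verifying that the remainder $u$ stays inside $Q$ rather than merely inside the translate $b + P$, and this relies essentially on the closure property $Q + P \subseteq Q$.
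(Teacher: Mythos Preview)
Your argument is correct and follows a genuinely different route from the paper. Both proofs invoke Gordan's lemma to obtain a finite monoid generating set $\Omega$ for $P \cap \mathbb{Z}^m$, but they diverge in how $\Omega_0$ is produced. The paper defines $\Omega_0$ combinatorially as $\{\sum_{b \in W} b : W \subseteq \Omega,\ W \text{ not contained in any facet of } P\}$ and then observes that for any integer point $v = \sum_b k_b b \in int(P)$ (with $k_b \in \mathbb{Z}_{\ge 0}$) the support $\{b : k_b \ge 1\}$ cannot lie in a single facet, so peeling off one copy of each supported generator lands the remainder in $\Omega_0$. Your approach instead passes to the shifted polyhedron $Q = \{\ell_j \ge 1\}$, uses Minkowski--Weyl to write $Q = B + P$, and applies the floor/fractional-part trick to confine the residue to a bounded region; this is essentially the standard argument that the lattice points of a rational polyhedron form a finitely generated module over its recession-cone semigroup. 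The paper's method is more elementary and hands you the explicit bound $|\Omega_0| \le 2^{|\Omega|}$; yours is more structural and generalizes verbatim to arbitrary rational polyhedra, at the cost of invoking Minkowski--Weyl. One small wording issue: calling $B$ ``the convex hull of the vertices of $Q$'' tacitly assumes $Q$ (equivalently $P$) is pointed, which is not part of the stated hypotheses; it is enough to take $B$ to be any rational polytope with $Q = B + P$, whose existence Minkowski--Weyl guarantees in general.
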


\begin{proof}
It is a classical result (cf. \cite[Proposition 3.4]{thurston2014entropy}) that  
$P\cap\mathbb{Z}^m$ is a finitely generated monoid.  
Let $\Omega$ be a finite set of generators of $P\cap\mathbb{Z}^m$, and 
let   
\[\Omega_0=\{\sum_{b\in W}b : W\subset \Omega, W\not\subset F \text{ for all faces }F\text{ of }\partial P\}.\]
Here a face of $\partial P$ is a polytope of dimension $m-1$ which is the intersection of $\partial P$ with a $m-1$ dimensional subspace of $\mathbb{R}^m$. 
Note that $W$ can possibly contain only a single point in $int(P)$. 
Clearly $\Omega_0$ is a finite set with at most $2^{|\Omega|}$ elements.

Note that 
a linear combination of elements in $\Omega$ with non negative coefficients 
lie on a face of $\partial P$ if and only if  all the coefficients for those generators that are not on this face are $0$. In other words, if $\sum_{b\in\Omega}k_b b$ is in $int(P)$ and $k_b$ are all non negative, then the set $\{b\in \Omega:k_b\ge 1\}$ must not be contained in any face of $\partial P$. Hence 
\[int(P) \cap\mathbb{Z}^m=\{a+\sum_{b\in \Omega} k_bb: a\in\Omega_0, k_b\in\mathbb{Z}, k_b\ge 0\}\]
and in particular $\Omega_0\subset int(P)\cap\mathbb{Z}^m$ as we desire. 
\end{proof}


Here is an example of the two finite sets $\Omega_0$ and $\Omega$ for a rational cone in ${\Bbb R}^2$. 

\begin{ex}
Let us consider the following rational cone in $\mathbb{R}^2$. 
$$P= \Bigl\{ {\bf x}= (x_1, x_2) \in \mathbb{R}^2 : 
\left(\begin{array}{cc}0 & 1 \\3 & -2\end{array}\right) \left(\begin{array}{c}x_1 \\x_2\end{array}\right) 
\ge  \left(\begin{array}{c}0 \\0\end{array}\right)
\Bigr\}.$$
One can take $\Omega = \{b_1=(1,0), b_2=(1,1), b_3= (2,3)\}$  
as  a set of generators of $P \cap \mathbb{Z}^2$. 
There are two faces of  $\partial P$. 
One is $\{(x, 0): x \ge 0\}$ which contains $\{b_1\}$ as a subset, and the other is 
$\{(x, \frac{3x}{2} ): x \ge 0\}$ which contains $\{b_3\}$ as a subset. 
One sees that $\Omega_0$ consists of five elements, 
$b_2$, $b_1+ b_2=(2,1)$, $b_1+ b_3= (3,3)$, $b_2+b_3=(3,4)$ and 
$b_1+ b_2+b_3=(4,4)$. 
\end{ex}

\begin{proof}[Proof of Theorem \ref{thm:newbound}] 
For a fibered cone $\mathscr{C}$, 
the closure $\overline{\mathscr{C}}$  is a rational cone in $H^1(M)$, 
because the unit Thurston norm ball is a polytope whose vertices are rational points \cite{Thurston86}. 
By Lemma \ref{lem:cone}, if an integral class $\delta$ is in $\mathscr{C}$, 
then it can always be written of the form $\delta=a+\sum_{b\in\Omega}k_b b$, 
where $a\in\Omega_0$ and $k_b$ is a non negative integer. 
If $S$ is a norm-minimizing surface of $\delta$, then we have 
$\|\delta\|= |\chi(S)|$ and it is bounded above by 
$$\max(1, \max_{b \in \Omega}(k_b))(\|a\|+\sum_{b\in\Omega}\|b\|).$$
Hence, when $|\chi(S)|>\displaystyle\max_{a\in\Omega_0} \|a\|+\sum_{b\in\Omega} \|b\|$, we have 
 $$|\chi(S)| \le  \max_{b \in \Omega}(k_b)(\|a\|+\sum_{b\in\Omega}\|b\|).$$
 Therefore 
$$\max_{b \in \Omega}(k_b)\geq \frac{|\chi(S)|} {\|a\|+\sum_{b\in\Omega} \|b\|} \geq
\frac{|\chi(S)|} {\max_{a\in\Omega_0} \|a\|+\sum_{b\in\Omega} \|b\|}.$$
Let  $b_m$ be the $b$ in $\Omega$ that maximizes $k_b$. 
We set 
$\alpha=a+\sum_{b\in\Omega, b\not=b_m}k_bb$, $\beta=b_m$ and $n=k_{b_m}$. 
We have $\alpha \in \mathscr{C}$ and $\beta \in \overline{\mathscr{C}}$. 
Then $\delta$ is written by 
$\delta=\alpha+n \beta$ with 
$$n\geq \frac{|\chi(S)|} {\max_{a\in\Omega_0} \|a\|+\sum_{b\in\Omega} \|b\|}.$$  
Note that the denominator in the right hand side only depends on the fibered cone. 
Now the theorem follows from Proposition \ref{lem:seq2}, 
since the set of primitive integral classes $\delta$ with 
$\|\delta\| \le \displaystyle\max_{a\in\Omega_0} \|a\|+\sum_{b\in\Omega} \|b\|$ is finite. 
\end{proof}

%
%
\medskip
\subsection{Normal generation in the fibered cone}	 \label{section:normalgeneration}

In this section, we prove the following theorem as a partial answer to
Conjecture \ref{conj:normalgeneration}. 

\begin{thm} \label{thm:normalgeneration} 
Let $F$ be a fibered face of a closed  hyperbolic fibered 3-manifold $M$, 
and let $L$ be a 2-dimensional rational subspace of $H^1(M)$.  
Then for all but finitely many primitive integral classes $(S, \psi)$ in $\mathscr{C}_F \cap L$, $\psi$ normally generates $\Mod(S)$. 
In particular, if the rank of $H^1(M)$ equals $2$, then 
Conjecture \ref{conj:normalgeneration} is true.
\end{thm}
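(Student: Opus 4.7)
The plan is to combine the arithmetic decomposition of Lemma~\ref{lem:cone} with Proposition~\ref{lem:seq} to produce, on each sufficiently complex fiber $S$, a nonseparating simple closed curve $c$ with $c$ and $\psi(c)$ disjoint and $\pm[c]\neq [\psi(c)]$ in $H_1(S)$. The Lanier--Margalit type criterion Lemma~\ref{lem:normalcriterion} will then force $\psi$ to normally generate $\Mod(S)$ whenever $g(S)\geq 3$; since $M$ is closed and so $g(S)=1+|\chi(S)|/2$, only finitely many primitive classes in $\mathscr{C}_F\cap L$ violate this genus condition, so the strategy will handle all but finitely many $\delta$ at once.

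First, I would apply Lemma~\ref{lem:cone} to $\mathscr{C}_F\cap L$, regarded as a rational cone inside $L\cong\R^2$. This produces finite sets $\Omega_0\subset\mathscr{C}_F\cap L$ and $\Omega\subset\overline{\mathscr{C}}_F\cap L$ of integral classes such that every integral class $\delta\in\mathscr{C}_F\cap L$ may be written as $\delta=a+\sum_{b\in\Omega}k_b b$ with $a\in\Omega_0$ and $k_b\in\Z_{\geq 0}$. I would arrange that $\Omega$ includes primitive integral representatives of the two boundary rays of $\overline{\mathscr{C}}_F\cap L$. Since $\|\delta\|\leq \|a\|+\sum_b k_b\|b\|$ and $\|\delta\|=|\chi(S)|$, once $|\chi(S)|$ exceeds a constant depending only on $F$ and $L$, at least one $k_b$ is large. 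Letting $\beta=b_m$ be a maximizer of $k_b$, $n=k_{b_m}$, and $\alpha=a+\sum_{b\neq b_m}k_b b$, we obtain a splitting $\delta=\alpha+n\beta$ with $\alpha\in\mathscr{C}_F\cap L$, $\beta\in\overline{\mathscr{C}}_F\cap L$, and $n$ comparable to $|\chi(S)|$.

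Next, I would feed $(\alpha,\beta,n)$ into Proposition~\ref{lem:seq} to obtain an essential simple closed curve $c$ on $S$ such that $c,\psi(c),\dots,\psi^{n-1}(c)$ are pairwise disjoint and $\pm[c]\neq [\psi(c)]$ in $H_1(S)$. The homological inequality forces $[c]\neq 0$: if $[c]=0$ then $[\psi(c)]=\psi_{*}[c]=0$, which would place $[\psi(c)]$ in $\{[c],-[c]\}$, a contradiction. So $c$ is nonseparating, and Lemma~\ref{lem:normalcriterion} then yields that $\psi$ normally generates $\Mod(S)$.

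The hard part will be arranging the inequality $n\geq n_0(\alpha,\beta)$, required by Proposition~\ref{lem:seq}, for all but finitely many $\delta$. The threshold $n_0(\alpha,\beta)$ depends on the intersection pattern of minimal representatives of $\alpha$ and $\beta$, and $\alpha$ itself varies with $\delta$, so a single naive choice of $\beta$ need not suffice. The dimension hypothesis on $L$ enters exactly here: by enlarging $\Omega$ to include finitely many auxiliary integral classes of the form $p\beta_1+q\beta_2$ with bounded $p,q$ (where $\beta_1,\beta_2$ generate the boundary rays), one can cover $\mathscr{C}_F\cap L$ by finitely many Farey-type sectors in each of which, after selecting the dominant $\beta$, the residue $\alpha$ ranges over a bounded family of integral classes. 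Consequently $n_0(\alpha,\beta)$ takes only finitely many values on each sector while $n$ tends to infinity with $|\chi(S)|$, so the required inequality holds outside a finite exceptional set. An analogous finite covering of a higher-dimensional rational cone is unavailable in general, which is the reason the statement is restricted to $2$-dimensional slices (cf.\ Remark~\ref{remark:dim}).
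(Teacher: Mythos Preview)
Your approach has a genuine gap at the uniformity step, and the proposed ``Farey-sector'' fix does not repair it.

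The claim that ``after selecting the dominant $\beta$, the residue $\alpha$ ranges over a bounded family of integral classes'' is false in any $2$-dimensional sector. If a sector is generated over $\Z_{\ge 0}$ by $\beta_1,\beta_2$ and you write $\delta=k_1\beta_1+k_2\beta_2$ with $k_1\ge k_2$, then $\beta=\beta_1$, $n=k_1$, and $\alpha=k_2\beta_2$; as $\delta$ moves along the diagonal $k_1\approx k_2$, the residue $\alpha$ is unbounded. More structurally, a finite union of arithmetic progressions $\{\alpha+n\beta:\alpha\in A,\ \beta\in\Omega,\ n\ge 0\}$ with $A,\Omega$ finite has linear growth and cannot exhaust the integer points of a $2$-dimensional cone. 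Enlarging $\Omega$ by finitely many classes $p\beta_1+q\beta_2$ does not change this. One might hope instead that $n_0(\alpha,\beta)$ grows slowly in $\alpha$; but in the proof of Proposition~\ref{lem:seq} one has $n_0=2Rk+2$ where $k$ is essentially the number of intersection curves of $S_\alpha$ with $S_\beta$, which scales linearly with $\|\alpha\|$, so along the diagonal $n_0(\alpha,\beta)$ and $n$ are of the same order and the inequality $n\ge n_0$ cannot be forced.

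The paper resolves this with an entirely different ingredient you did not use: Theorem~\ref{thm:awayfromnbhdofboundary}, proved via McMullen's concavity of $1/\log K$ together with the Lanier--Margalit stretch-factor criterion, gives an $x\in\mathscr{C}_F$ such that \emph{every} primitive class in $x+\mathscr{C}_F$ already normally generates. The set $(\mathscr{C}_F\setminus(x+\mathscr{C}_F))\cap L$ is then a pair of strips of bounded width along the two boundary rays of the $2$-dimensional cone, and its integer points genuinely form a finite union of arithmetic sequences $v+nw$ with \emph{fixed} $(v,w)$. On each of these, Theorem~\ref{thm:boundaryparallel sequences} (i.e., Proposition~\ref{lem:seq} plus Lemma~\ref{lem:normalcriterion}) applies with a fixed $n_0$, leaving only finitely many exceptions. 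Your decomposition via Lemma~\ref{lem:cone} and your deduction that $c$ is nonseparating are fine, but without the Teichm\"uller/stretch-factor input you cannot reduce to finitely many $(\alpha,\beta)$ pairs.
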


\begin{center}
\begin{figure}[t]
\includegraphics[height=3.8cm]{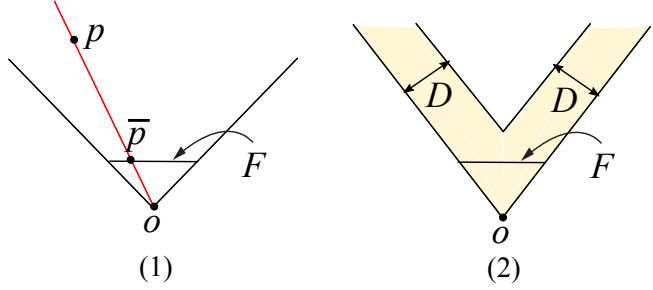}
\caption{(1) Fibered face $F$ in the fibered cone $\mathscr{C}$. 
($p$ and $\overline{p}$ lie on the same ray in $\mathscr{C}_F$ passing through the origin.) 
(2) Subset $\mathcal{N}_D \subset \mathscr{C}$.}  
\label{fig_cone}
\end{figure}
\end{center}

For the proof of Theorem \ref{thm:normalgeneration}, 
we first prove the following result.

\begin{thm} \label{thm:awayfromnbhdofboundary} 
Let $\mathscr{C}$ be a fibered cone of a closed hyperbolic fibered 3-manifold $M$. 
Then there exists some $x \in \mathscr{C}$ such that for each 
primitive integral class $(S, \psi) \in x+ \mathscr{C}$, 
$\psi$ normally generates $\Mod(S)$, 
where  $x+\mathscr{C}=\{x+v: v\in\mathscr{C}\}$.
\end{thm}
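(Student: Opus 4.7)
The plan is to combine Proposition \ref{lem:seq} with Lemma \ref{lem:normalcriterion}. A key preliminary observation is that the simple closed curve $c$ produced by Proposition \ref{lem:seq} is automatically nonseparating: if $[c] = 0$ in $H_1(S)$, then $[\psi(c)] = \psi_*[c] = 0 = [c]$, contradicting the hypothesis $\pm[c] \neq [\psi(c)]$. Hence, together with the disjointness of $c$ and $\psi(c)$, Lemma \ref{lem:normalcriterion} applies directly and gives that $\psi$ normally generates $\Mod(S)$ whenever $S$ has genus at least $3$ and $\gamma = \alpha + n\beta$ with $\alpha \in \mathscr{C}$, $\beta \in \overline{\mathscr{C}}$ both integer, and $n \ge n_0(\alpha,\beta)$. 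The genus requirement is automatic once $\|\gamma\|$ is large enough, since $|\chi(S_\gamma)| = \|\gamma\|$.

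To set up the decomposition for an arbitrary primitive integer class $\gamma$, I would apply Lemma \ref{lem:cone} (developed later in Section 3) to the rational polyhedral cone $\overline{\mathscr{C}}$: there exist finite sets $\Omega_0 \subset \mathscr{C} \cap H^1(M;\mathbb{Z})$ and $\Omega \subset \overline{\mathscr{C}} \cap H^1(M;\mathbb{Z})$ such that every integer class in $\mathscr{C}$ has the form $a + \sum_{b \in \Omega} k_b b$ with $a \in \Omega_0$ and $k_b \ge 0$. For each $\gamma$, pick $b_m \in \Omega$ maximizing $k_b$ and set $\alpha = a + \sum_{b \in \Omega,\, b \neq b_m} k_b b$, $\beta = b_m$, and $n = k_{b_m}$, so that $\gamma = \alpha + n\beta$. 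By the estimate in the proof of Theorem \ref{thm:newbound}, $n = k_{b_m}$ is bounded below by a fixed positive multiple of $\|\gamma\|$.

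The main obstacle is that the threshold $n_0(\alpha,\beta)$ in Proposition \ref{lem:seq} depends on $\alpha$, and unpacking its proof gives $n_0 = 2Rk+2$ with $k = \max_e|d(e)|$ controlled by the intersection number $|S_\alpha \cap S_\beta|$ (which scales linearly in the coefficients of $\alpha$) and $R = O(\log k)$ from Lemma \ref{lem_3regular}, whence $n_0 = O(\|\alpha\|\log\|\alpha\|)$. The condition $n \ge n_0(\alpha,\beta)$ therefore reduces to comparing the dominant coefficient $k_{b_m}$ with a quantity of order $\bigl(\sum_{b \neq b_m} k_b\bigr)\log\bigl(\sum_{b \neq b_m} k_b\bigr)$. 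The hard part of the argument is to arrange this comparison uniformly: I would partition $\mathscr{C}$ (up to a bounded subset) into finitely many ``dominant-direction regions'' indexed by $\Omega$, in each of which one coefficient sufficiently overwhelms the rest, and choose $x$ to be a sufficiently large integer multiple of $\sum_{b \in \Omega} b$ so that every primitive integer class in $x + \mathscr{C}$ lies in one of these regions and satisfies the required inequality. Controlling the logarithmic factor against the dominance gap in this partition is the main technical step, and once it is achieved, Proposition \ref{lem:seq} and Lemma \ref{lem:normalcriterion} deliver the normal generation of $\Mod(S_\gamma)$ by $\psi_\gamma$ uniformly on $x + \mathscr{C}$.
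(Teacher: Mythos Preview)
Your approach is entirely different from the paper's, and it contains a genuine gap.

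The paper does not use Proposition~\ref{lem:seq} or Lemma~\ref{lem:normalcriterion} here at all. Instead it invokes McMullen's result that $y = 1/\log\lambda(\cdot)$ is a strictly concave, degree-$1$ homogeneous function on $\mathscr{C}$. From concavity one gets $y(\overline{p}) \ge k\cdot d(\overline{p},\partial\mathscr{C})$ for some $k>0$, and homogeneity then yields $\lambda(p) \le e^{1/(k\,d(p,\partial\mathscr{C}))}$. Hence on the region $\mathscr{C}\setminus\mathcal{N}_D = \{p:d(p,\partial\mathscr{C})>D\}$ the stretch factor is below $\sqrt{2}$ once $D$ is large, and Lanier--Margalit's $\sqrt{2}$ criterion gives normal generation directly. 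Any $x \in \mathscr{C}\setminus\mathcal{N}_D$ works because $\partial\mathcal{N}_D$ is parallel to $\partial\mathscr{C}$. This bypasses all combinatorics of surgered surfaces.

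Your route, by contrast, attempts to make Theorem~\ref{thm:boundaryparallel sequences} uniform over $x+\mathscr{C}$, and this is where it breaks. Since the Thurston norm is linear on $\overline{\mathscr{C}}$, writing $\gamma = \alpha + n\beta$ forces $\|\gamma\| = \|\alpha\| + n\|\beta\|$. Your own estimate gives $n_0(\alpha,\beta) \asymp \|\alpha\|\log\|\alpha\|$, so you need $\|\alpha\| = o(\|\gamma\|/\log\|\gamma\|)$. But with $\beta$ drawn from any \emph{fixed finite} set $\Omega$, the best approximation satisfies $\|\gamma - n\beta\| \gtrsim d(\widehat{\gamma},\widehat{\Omega})\cdot\|\gamma\|$, where $\widehat{\cdot}$ denotes the projective direction. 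Since integral directions are dense in the fibered face, there are primitive integral classes $\gamma$ of arbitrarily large norm whose direction is bounded away from every element of $\widehat{\Omega}$; for these, $\|\alpha\|$ is a fixed positive fraction of $\|\gamma\|$, and $n_0 \gg n$. Concretely, if $\Omega$ is the extremal generating set of a simplicial cone and $\gamma = N\sum_{b\in\Omega}b$, then for any $\beta = b_m\in\Omega$ one has $n\le N$ while $\|\alpha\|\asymp(|\Omega|-1)N$, so $n_0\asymp N\log N > n$. Your proposed ``dominant-direction'' partition cannot cover these balanced classes, and translating by a large multiple of $\sum_b b$ does nothing to the ratios of coefficients. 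This is precisely why the paper restricts the arithmetic-sequence method to finitely many sequences in a $2$-dimensional slice (Theorem~\ref{thm:normalgeneration} and Remark~\ref{remark:dim}) and proves Theorem~\ref{thm:awayfromnbhdofboundary} by the stretch-factor argument instead.
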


\begin{proof}
Let $d$ be any Euclidean metric on $H^1(M)$. 
Let $F$ be the fibered face corresponding to $\mathscr{C}$. 
 For every point $p\in\mathscr{C}$, let $\overline{p}$ be the intersection of $F$ 
  with the ray starting from the origin and passing $p$ (Figure \ref{fig_cone}(1)). 
By \cite[Corollary 5.4]{McMullen00}, 
  we have a real analytic, strictly concave and degree-1 homogeneous function 
  $y= 1/{\log K(\cdot)}$ defined on $\mathscr{C}$, such that the stretch factor $\lambda(p)$ for 
  $p\in\mathscr{C}$ is equal to  $K(p)$ 
  and $y(p) = 1/{\log K(p)} \to 0$ as $p \to \partial F$. 
  The concavity implies that there must be some $k>0$ (independent on the choice of $\overline{p}$) 
  so that 
  \[{1\over \log(K(\overline{p}))}\geq k \cdot d(\overline{p}, \partial \mathscr{C}).\]
   A way to see the existence of $k$ is as follows: concavity of $y$ implies that there is some point $p_0 \in F$, 
   where $y(p_0)>0$. 
   Then, for any point $\overline{p} \in F$, consider the line segment from $p_0$ to the boundary of $F$ passing through $\overline{p}$. 
   Then concavity of $y$ means that on this line segment, 
   $y$ is bounded from below by the linear function $L$ which takes value $0$ at one end and $y(p_0)$ at another end. 
   Hence it has a slope $s= s(\overline{p})$ that depends on $\overline{p}$ and 
   $s= s(\overline{p})$ is continuous on $\overline{p}$. 
   On the other hand, the function $d(\cdot, \partial\mathscr{C})$, restricted to this line segment, is piecewise linear, 
   and hence it is also bounded from above by a linear function $L'$
   taking value $0$ at the end on $\partial F$. 
   We choose such linear function $L'$ with the smallest slope $s'= s'(\overline{p})$. 
   Then $s'= s'(\overline{p})$  is continuous on $\overline{p}$. 
   Now $k$ can be chosen as any number below the ratio $s/s'$ 
   between these two slopes. 
   As both slopes depends continuously on $\overline{p}$, and $F$ has compact closure, we can choose a universal $k$ that works on the whole face $F$.

  Furthermore,  the degree-$1$ homogeneity implies that
  \[{1\over \log(K(p))}={d(0,p)\over d(0,\overline{p})}\cdot {1\over \log(K(\overline{p}))}\] 
  
For $D>0$, we  consider the following set $\mathcal{N}_D$ (Figure \ref{fig_cone}(2)). 
  $$\mathcal{N}_D=\{p \in \mathscr{C}: d(p, \partial\mathscr{C})\leq D\}.$$ 
    From the above computation, the stretch factor for 
  $p\in \mathscr{C}\backslash \mathcal{N}_D$ satisfies 
  $$\lambda(p) = 
  e^{\log K(p)}=(e^{\log K(\overline{p})})^{d(0,\overline{p})\over d(0,p)} \le (e^{1\over k d(\overline{p},\partial\mathscr{C})})^{d(0,\overline{p})\over d(0,p)}=e^{1\over k d(p,\partial\mathscr{C})}\le e^{\frac{1}{kD}}.$$
  Hence as long as $D$ is sufficiently large, $\lambda(p)$ can be made to be as close to 1 as needed. 
  In particular it is smaller than $\sqrt{2}$ when $D$ is large enough.  
  This together with \cite[Theorem 1.2]{LanierMargalit18} shows that for some $D$, all primitive integral classes in $\mathscr{C}\backslash\mathcal{N}_D$ are normal generators. The theorem now follows by picking an arbitrary $x\in\mathscr{C}\backslash\mathcal{N}_D$, 
  due to the fact that the boundary of $\mathcal{N}_D$ must be parallel to that of $\partial \mathscr{C}$ itself 
  (See Figure \ref{fig_cone}(2)).
\end{proof}

The next result follows immediately from Lemma \ref{lem:normalcriterion} and Proposition \ref{lem:seq}.

\begin{thm} 
\label{thm:boundaryparallel sequences} 
Let $\mathscr{C}$ be a fibered cone of a closed hyperbolic fibered 3-manifold. 
Suppose that $(S_{\alpha_n}, \psi_{\alpha_n})$ is a sequence of primitive integral classes in 
$\mathscr{C}$ such that 
$\alpha_n = v + nw$, 
where $v \in \mathscr{C}$ and $w \in \overline{\mathscr{C}} $ are fixed integral classes. 
Then $\psi_{\alpha_n}$ normally generates $\Mod(S_{\alpha_n})$ 
for sufficiently large $n$. 
\end{thm}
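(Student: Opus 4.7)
The plan is to combine Proposition~\ref{lem:seq} with the well-suited curve criterion of Lemma~\ref{lem:normalcriterion} in a direct manner; indeed, the paper flags that the result follows immediately from these two ingredients. My first step is to apply Proposition~\ref{lem:seq} with $\alpha = v$ and $\beta = w$, producing an integer $n_0 > 0$ with the following property: whenever $\alpha_n = v + nw$ is a primitive integral class in $\mathscr{C}$ and $n \geq n_0$, there is an essential simple closed curve $c$ on $S_{\alpha_n}$ with $c, \psi_{\alpha_n}(c), \ldots, \psi_{\alpha_n}^{n-1}(c)$ pairwise disjoint and $\pm [c] \neq [\psi_{\alpha_n}(c)]$ in $H_1(S_{\alpha_n})$. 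In particular, $c$ and $\psi_{\alpha_n}(c)$ are disjoint, which is one of the two geometric hypotheses of Lemma~\ref{lem:normalcriterion}.

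Next I verify the remaining hypotheses of Lemma~\ref{lem:normalcriterion}, namely that $c$ is nonseparating and that the genus of $S_{\alpha_n}$ is at least $3$. The nonseparating property is automatic from the homological condition: if $c$ were separating then $[c] = 0$, whence $[\psi_{\alpha_n}(c)] = (\psi_{\alpha_n})_{*}[c] = 0 = \pm [c]$, contradicting the conclusion of Proposition~\ref{lem:seq}. For the genus bound, I use that the Thurston norm is linear on the closure of the fibered cone, so $\|\alpha_n\| = \|v\| + n\|w\|$; since $v \in \mathscr{C}$ lies in the open cone and $w \in \overline{\mathscr{C}}$ with $w \neq 0$ (otherwise the claim is vacuous), we have $\|w\| > 0$. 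Hence $|\chi(S_{\alpha_n})| = \|\alpha_n\|$ grows linearly in $n$, and because $S_{\alpha_n}$ is a closed surface its genus tends to infinity.

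Thus for all sufficiently large $n$, simultaneously $n \geq n_0$ and $\mathrm{genus}(S_{\alpha_n}) \geq 3$. For such $n$, Lemma~\ref{lem:normalcriterion} applied to $f = \psi_{\alpha_n}$ with the witness curve $c$ yields that the normal closure of $\psi_{\alpha_n}$ in $\Mod(S_{\alpha_n})$ equals the entire mapping class group, which is the desired conclusion. I do not expect any substantive obstacle here: the two thresholds coming from Proposition~\ref{lem:seq} and from the genus bound are both met automatically for large $n$, and the strict homological inequality $\pm [c] \neq [\psi_{\alpha_n}(c)]$ supplied by Proposition~\ref{lem:seq} simultaneously rules out a separating $c$ and provides the well-suited configuration needed to invoke Lemma~\ref{lem:normalcriterion}.
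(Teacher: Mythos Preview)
Your proposal is correct and follows exactly the approach the paper indicates: combine Proposition~\ref{lem:seq} with Lemma~\ref{lem:normalcriterion}. You have simply spelled out the details the paper leaves implicit, namely why the curve $c$ from Proposition~\ref{lem:seq} is nonseparating and why the genus of $S_{\alpha_n}$ eventually exceeds $2$.
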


%



We are now ready to prove Theorem \ref{thm:normalgeneration}. 


\noindent
\begin{proof}[Proof of Theorem \ref{thm:normalgeneration}]
Let $L$ be a 2-dimensional rational subspace of $H^1(M)$ satisfying the assumption of Theorem \ref{thm:normalgeneration}. 
Theorem \ref{thm:awayfromnbhdofboundary}  says that there is some $x\in\mathscr{C}$
so that all primitive integral classes $(S, \psi)$ in $x+\mathscr{C}$ normally generate $\mathrm{Mod}(S)$. 
In particular this holds for all primitive integral classes in $(x+\mathscr{C})\cap L$. 
Because $L$ is of dimension $2$, the integral classes in $(\mathscr{C}\backslash (x+\mathscr{C}))\cap L$ are the union of finitely many sequences 
of the form $(v+nw)_{n\in\mathbb{N}}$, where 
$v \in \mathscr{C}$ and $w \in \overline{\mathscr{C}} $. 
Thus by Theorem \ref{thm:boundaryparallel sequences},  
for all but finitely many primitive integral classes $(S, \psi)$ in $(\mathscr{C}\backslash (x+\mathscr{C}))\cap L$, 
$\psi$ normally generates $\mathrm{Mod}(S)$. 
This completes the proof.
\end{proof}

\begin{remark}
\label{remark:dim}
Our approach to Theorem \ref{thm:normalgeneration} 
does not work when the dimension of the rational subspace $L$ of $H^1(M)$ is more than $2$. 
This is because in this case, 
the intersection $(\mathscr{C}\backslash (x+\mathscr{C}))\cap L$ no longer consists of finitely many sequences 
of primitive integral classes of the form $v + nw$, where 
$v \in \mathscr{C}$ and $w \in \overline{\mathscr{C}} $. 
\end{remark}

%
%
\medskip

\section{Sequences in the fibered cone of the magic manifold} \label{section:magicmanifold}

Let $\mathcal{C}_3$ be the $3$ chain link  in $S^3$ as in Figure~\ref{fig_3chainmagic}(1). 
The magic manifold $N$ is the exterior of $\mathcal{C}_3$ 
(hence $\partial N$ consists of three boundary tori), and 
it is a hyperbolic and fibered $3$-manifold. 
We  give some background on invariant train tracks in Section \ref{sec:traintrack}
and we discuss the fibered cone of $N$ in Section \ref{sec:magicmanifold}.
We compute the upper and lower bounds of the asymptotic translation length of
particular sequences  in the fibered cone of $N$ in Sections \ref{sec:lowerbound} and \ref{sec:upperbound}.
Then we  prove Theorem \ref{thm:allrational} in Section \ref{sec:behaviors}.

\medskip
\subsection{Invariant train tracks for pseudo-Anosov maps} 	
\label{sec:traintrack}
 
 For definitions and basic results on train tracks, see \cite{BestvinaHandel95,PapadopoulosPenner87,FarbMargalit12}.
Let $\psi: S \rightarrow S$ be a pseudo-Anosov homeomorphism defined on a surface $S$ possibly with boundary/punctures. 
When $S$ is a punctured surface, 
we say that $\psi$ is {\it fully punctured} if 
the set of singularities of the unstable foliation for $\psi$ 
is contained in the set of punctures of $S$.   

Let $\tau$ be an invariant train track for $\psi$. 
Then $\psi: S \rightarrow S $ induces a map on  $\tau $ to itself  which takes switches (vertices) to themselves. 
Such a map is called the {\it train track map}. 
By abuse of notations, we denote the train track map on $\tau$ also  by $\psi: \tau \rightarrow \tau$. 
Following \cite[Section 3.3]{BestvinaHandel95}, we say that 
a branch $e$ of $\tau$ is {\it real} if 
there exists an integer $m \ge 1$ such that 
$\psi^m(e)$ passes through all branches of $\tau$.  
Otherwise we say that $e$ is {\it infinitesimal}. 
The train track map  $\psi: \tau \rightarrow \tau$ 
induces a finite digraph $\Gamma$ by taking a vertex for each real branch of $\tau$, 
 and then adding $m_{ij}$ directed edges from the $j$th real branch $e_j$ to the $i$th real branch $e_i$, 
 where $m_{i,j}$ is the number of times so that the image $\psi(e_j)$ under the train track map $\psi$ passes through $e_i$ in either direction.

 For the lower bound of $\ell_{\mathcal{C}}(\psi)$, we recall a result by Gadre--Tsai. 
 The following statement is a consequence of Lemma~5.2 in \cite{GadreTsai11} together with the proof of Theorem~5.1 in \cite{GadreTsai11}.

 \begin{prop}
 \label{prop_lowerbound}
 Let $\psi \in \mathrm{Mod}(S_{g,n})$ be a pseudo-Anosov element and 
 let $\tau$ be an invariant train track for $\psi$. 
 Suppose that $r$ is a positive integer such that 
 for any real branch $e$ of $\tau$, 
 $\psi^{r}(e)$ passes through every real branch. 
 If we set $h= r + 24|\chi(S_{g,n})|-8n$, 
 then $\psi^h(e)$ passes through every branch of $\tau$ (including infinitesimal branches).  
 Moreover if we set 
 $$w= h+ 6|\chi(S_{g,n})|-2n = r + 30|\chi(S_{g,n})|-10n \le r + 30|\chi(S_{g,n})|,$$ 
 then we have 
 $$\ell_{\mathcal{C}}(\psi) \ge \frac{1}{w} \ge \frac{1}{r + 30|\chi(S_{g,n})|}.$$
 \end{prop}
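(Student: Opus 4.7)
The plan is to prove the two assertions of the proposition in sequence, each of which reduces to a combinatorial estimate on how the train track map $\psi \colon \tau \to \tau$ propagates information across $\tau$.

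First I would establish the bound on $h$. By hypothesis, $\psi^r(e)$ already covers every real branch of $\tau$ for every real branch $e$, so the only branches that $\psi^h(e)$ might still miss are the infinitesimal ones. These form small subgraphs inside the infinitesimal polygons at singularities of the invariant foliations, and a standard count of branches and switches of a generic birecurrent train track on $S_{g,n}$ bounds their total number by an expression of the form $24|\chi(S_{g,n})|-8n$. Each additional iteration of $\psi$ beyond $\psi^r$ forces the image of a real branch to sweep across the switches incident to the real branches already covered, and thereby to traverse at least one previously untouched infinitesimal branch. Iterating $24|\chi|-8n$ times exhausts the infinitesimal branches and yields the stated $h$.

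Next I would use this to derive the lower bound $\ell_{\mathcal{C}}(\psi) \geq 1/w$. Fix a vertex cycle $c$ of $\tau$, i.e., a carried simple closed curve traversing each branch at most twice. The key claim is that $d_{\mathcal{C}}(c, \psi^w(c)) \geq 1$. Once $\psi^h$ sends every real branch through all of $\tau$, the carried weight vector of $\psi^h(c)$ is strictly positive on every branch, so $\psi^h(c)$ fills $\tau$ in a combinatorial sense. The extra $6|\chi|-2n$ iterations, which roughly match the number of switches and hence bound the number of distinct vertex cycles, ensure that this filling is strong enough to preclude $\psi^w(c)$ from being isotoped disjoint from $c$. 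Granting the claim, subadditivity of displacement gives $d_{\mathcal{C}}(c, \psi^{kw}(c)) \geq k$ for every $k \geq 1$, and taking the $\liminf$ yields $\ell_{\mathcal{C}}(\psi) \geq 1/w$; the stated inequality $1/w \geq 1/(r+30|\chi(S_{g,n})|)$ is then immediate from $w = r + 30|\chi|-10n \leq r+30|\chi|$.

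The hard part is precisely the intersection claim $i(c, \psi^w(c)) \geq 1$. Full carried support does not automatically force intersection, since two distinct curves carried by $\tau$ can sometimes be realized disjointly if their weight vectors are proportional on suitable sub-tracks. Excluding this with the tight buffer $6|\chi|-2n$ requires a careful analysis of how the Perron--Frobenius action of $\psi$ on the cone of carried weights separates vertex cycles, which is exactly the content of Lemma 5.2 and the proof of Theorem 5.1 of Gadre--Tsai. I would invoke their analysis at this step rather than reprove it in detail, using it as a black box to close out the estimate.
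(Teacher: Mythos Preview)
The paper does not supply its own proof of this proposition; it simply records the statement as a consequence of Lemma~5.2 and the proof of Theorem~5.1 in Gadre--Tsai and uses it as a black box. Your proposal ultimately also defers to Gadre--Tsai, so in that narrow sense you and the paper agree. However, the argument you sketch before invoking the black box contains a genuine error that inverts the logical structure of the Gadre--Tsai proof.

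The step ``subadditivity of displacement gives $d_{\mathcal{C}}(c, \psi^{kw}(c)) \geq k$'' from the single estimate $d_{\mathcal{C}}(c, \psi^{w}(c)) \geq 1$ is false. The triangle inequality only yields the \emph{upper} bound $d_{\mathcal{C}}(c,\psi^{kw}(c)) \leq k \cdot d_{\mathcal{C}}(c,\psi^{w}(c))$; there is no general principle producing a lower bound of this shape. Indeed, your ``key claim'' $d_{\mathcal{C}}(c,\psi^{w}(c)) \geq 1$ is automatic for any pseudo-Anosov $\psi$ and any $w \geq 1$, since $\psi^{w}(c) \neq c$. If your inference were valid, every pseudo-Anosov would satisfy $\ell_{\mathcal{C}}(\psi) \geq 1$, contradicting the very phenomena this paper studies. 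What Gadre--Tsai actually use is the Masur--Minsky nesting machinery: once $\psi^{w}$ sends every branch over every branch, $\psi^{w}(\tau)$ is carried by and \emph{fills} $\tau$ (in the sense involving diagonal extensions, which is where the extra $6|\chi|-2n$ iterates enter), producing a nested chain $\tau \succ \psi^{w}(\tau) \succ \psi^{2w}(\tau) \succ \cdots$. It is this chain of nestings, via the nesting lemma, that forces $d_{\mathcal{C}}(c,\psi^{kw}(c))$ to grow linearly in $k$; a single distance-$1$ estimate cannot do this. So the ``hard part'' is not the intersection claim $i(c,\psi^{w}(c)) \geq 1$---that is trivial---but precisely the iteration step you have treated as routine.
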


\medskip
\subsection{Fibered cones of the magic manifold} \label{sec:magicmanifold}

\begin{center}
\begin{figure}[t]
\includegraphics[height=4.5cm]{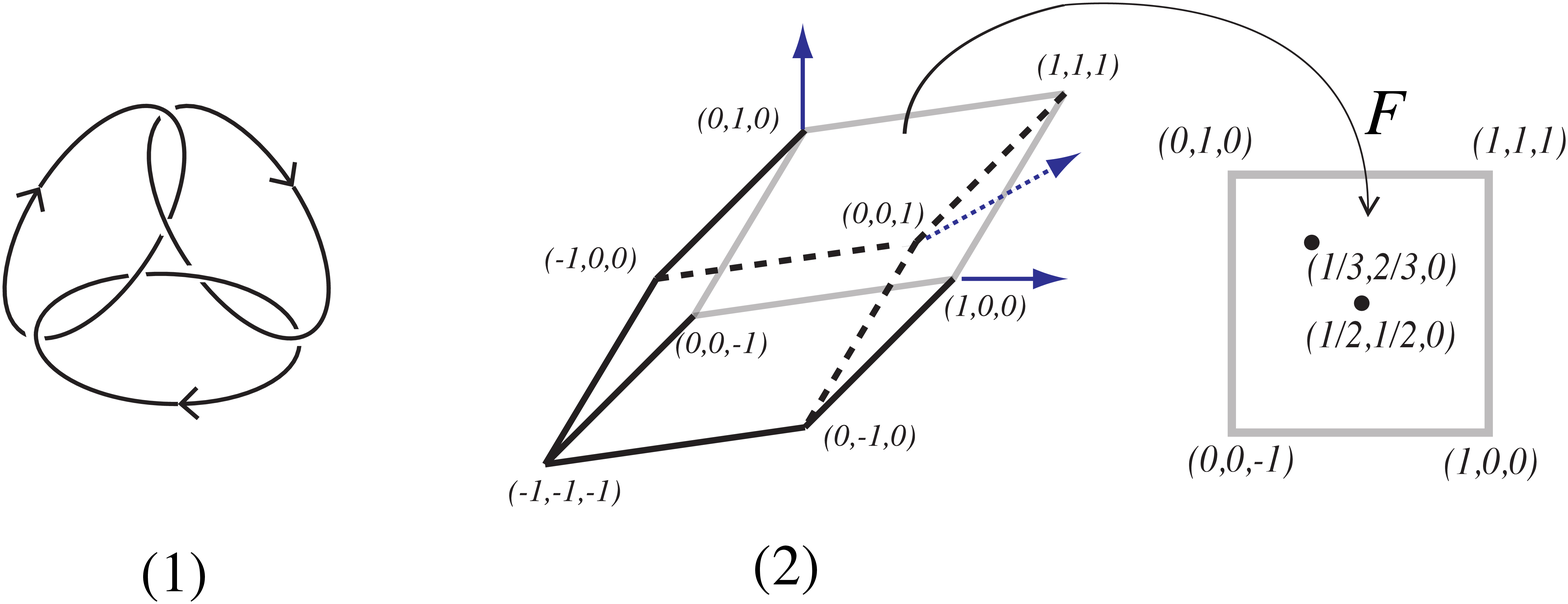}
\caption{(1) $3$ chain link $\mathcal{C}_3$. 
(2) Thurston norm ball of  $N$ and  fibered face $F$.}  
\label{fig_3chainmagic}
\end{figure}
\end{center}

We consider coordinates of integral classes in fibered cones of $N$. 
We assign orientations of the three components  of $\mathcal{C}_3$
as in Figure~\ref{fig_3chainmagic}(1).  
Let $S_{\alpha}$, $S_{\beta}$ and $S_{\gamma}$ be the oriented $2$-punctured disks bounded by 
these components of $\mathcal{C}_3$. 
We set $\alpha= [S_{\alpha}]$, $\beta= [S_{\beta}]$, $\gamma= [S_{\gamma}] \in H_2(N, \partial N; {\Bbb Z}) \simeq H^1(N; {\Bbb Z})$. 
Then $\alpha, \beta, \gamma$ form a basis of $H_2(N, \partial N; {\Bbb Z})$. 
We denote by $(x,y,z)$,  the class $x\alpha+ y \beta+ z \gamma $. 
The Thurston norm ball $B_N$  is the parallelepiped with vertices 
$ \pm \alpha = \pm (1,0,0)$, 
$\pm \beta = \pm (0,1,0)$, 
$\pm \gamma = \pm (0,0,1)$ and 
$\pm (\alpha+ \beta+ \gamma) = \pm (1,1,1)$, 
see Figure~\ref{fig_3chainmagic}(2).  

 A symmetry of $\mathcal{C}_3$ tells us that 
every top-dimensional face of $B_N$ is a fibered face. 
Moreover 
all fibered faces of $N$ are permuted transitively by homeomorphisms of  $N$. 
Hence they have the same topological types in their fibers and the same dynamics of their monodromies. 
To study  monodromies of fibrations on $N$, it suffices to pick a particular fibered face, say $F$  with vertices 
$(1,0,0)$, $(1,1,1)$, $(0,1,0)$ and $(0,0,-1)$, 
see Figure~\ref{fig_3chainmagic}(2). 
For a primitive integral class $(S, \psi) \in  \mathscr{C}_F$, 
the monodromy $\psi$ is pseudo-Anosov defined on $S$ with boundary components, 
since $\partial N \ne \emptyset$. 
Each connected component of $\partial S$ is a simple closed curve 
which lies on one of the boundary tori of $N$. 
By abusing notations, we often regard 
boundary components of  $S$ as punctures of $S$
by crushing each boundary component to a puncture.
Hence we think of $\psi$ as a pseudo-Anosov map defined on the punctured surface $S$. 
Such ambiguity does not matter for our purpose since the computation of the asymptotic translation lengths of the pseudo-Anosov monodromies on the curve complex will not be affected. 
Under this convention, 
one sees that 
for any primitive integral class $(S, \psi)  \in \mathscr{C}_F$, 
the pseudo-Anosov monodromy $\psi$ is fully punctured, see for example \cite{Kin15}. 

The open face $int(F)$  is written by 
\begin{eqnarray*}
\hspace{1cm}int(F)= \{(x,y,z)\ |\ x+y-z=1, \ x>0,\  y>0,\  x>z,\  y>z\}. 
\end{eqnarray*}
This implies that 
$(x,y,z) \in \mathscr{C}_F$ 
if and only if 
$x>0$, $y>0$,  $x>z$ and $y>z$. 
The next lemma tells us the topological type of the corresponding fiber $S_{(x,y,z)}$.

\begin{lem}[\cite{KinTakasawa11}]
\label{lem_magicformula}
For a primitive integral class $(x,y,z) \in \mathscr{C}_F$, 
let $|\partial S_{(x,y,z)}|$ denote the number of the boundary components of  $S_{(x,y,z)}$. 
The Thurston norm $\|(x,y,z)\|= |\chi(S_{(x,y,z)})|$ equals $x+y-z$,  and $|\partial S_{(x,y,z)}|$ is given by 
$$|\partial S_{(x,y,z)}|= \gcd(x,y+z) + \gcd(y,z+x)+ \gcd(z, x+y).$$
More precisely, each term in the right-hand side 
expresses the number of boundary components of $S_{(x,y,z)}$ 
which lie on one of the boundary tori of $ N$. 
\end{lem}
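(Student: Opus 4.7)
The plan is to split the statement into the Thurston norm formula and the boundary-component count, and derive each by short linear-algebraic computations using the explicit description of the face $F$ and of the three surfaces $S_\alpha,S_\beta,S_\gamma$.

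For the norm, first observe that the four vertices $(1,0,0)$, $(1,1,1)$, $(0,1,0)$, $(0,0,-1)$ of $F$ all lie on the affine hyperplane $\{x+y-z=1\}$. Since the Thurston norm is linear on the cone over any top-dimensional face of its unit ball, and equals $1$ on each vertex of $F$, this forces $\|(x,y,z)\|=x+y-z$ throughout $\overline{\mathscr{C}_F}$. For $(x,y,z)\in\mathscr{C}_F$ the corresponding fiber $S_{(x,y,z)}$ is norm-minimizing in its homology class, hence $|\chi(S_{(x,y,z)})|=\|(x,y,z)\|=x+y-z$.

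For the boundary count, I would fix the standard meridian-longitude bases $(\mu_\alpha,\lambda_\alpha)$, $(\mu_\beta,\lambda_\beta)$, $(\mu_\gamma,\lambda_\gamma)$ on the three boundary tori and describe the connecting map
\[ \partial\colon H_2(N,\partial N;{\Bbb Z}) \longrightarrow H_1(\partial N;{\Bbb Z}) = H_1(T_\alpha)\oplus H_1(T_\beta)\oplus H_1(T_\gamma) \]
on the basis $\alpha,\beta,\gamma$. Because $S_\alpha$ is a Seifert surface for the $\alpha$-component of $\mathcal{C}_3$, its boundary on $T_\alpha$ is the longitude $\lambda_\alpha$, while its boundary on $T_\beta$ (resp.\ $T_\gamma$) is a multiple of the meridian $\mu_\beta$ (resp.\ $\mu_\gamma$) equal to the linking number of the $\alpha$-component with the $\beta$-component (resp.\ the $\gamma$-component). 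With the orientations of Figure~\ref{fig_3chainmagic}(1) these pairwise linking numbers are all $+1$, and the analogous statements hold for $S_\beta$ and $S_\gamma$. By ${\Bbb Z}$-linearity the restriction of $\partial(x\alpha+y\beta+z\gamma)$ to $T_\alpha$ is then $x\lambda_\alpha+(y+z)\mu_\alpha$, and the analogous classes on $T_\beta$ and $T_\gamma$ are $y\lambda_\beta+(z+x)\mu_\beta$ and $z\lambda_\gamma+(x+y)\mu_\gamma$. The number of components of a simple multicurve representing a class $p\lambda+q\mu$ on a torus equals $\gcd(p,q)$, so the three contributions are $\gcd(x,y+z)$, $\gcd(y,z+x)$, $\gcd(z,x+y)$ respectively, and summing gives the formula.

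The main obstacle is the orientation bookkeeping: one has to verify that the two linking-number contributions on each torus come in with the same sign, so that the inner coordinate is $y+z$ rather than $y-z$. This forces an explicit appeal to the orientations in Figure~\ref{fig_3chainmagic}(1), and I would handle it by first establishing the formula for the basis elements $\alpha,\beta,\gamma$ individually and then invoking linearity of $\partial$. A useful sanity check along the way: for $(1,1,1)$ the formula predicts $\gcd(1,2)+\gcd(1,2)+\gcd(1,2)=3$ components with $|\chi|=1$, i.e.\ a thrice-punctured sphere with one puncture on each torus; for $(1,0,0)=\alpha$ it predicts $\gcd(1,0)+\gcd(0,1)+\gcd(0,1)=3$ components, matching the fact that $S_\alpha$ is the twice-punctured disk with one boundary on each torus.
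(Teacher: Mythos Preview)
The paper does not prove this lemma; it is quoted from \cite{KinTakasawa11} without argument. Your sketch is therefore not competing with anything here, and the approach is sound.

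One step wants an extra sentence. You assert that the number of components of a simple multicurve on a torus representing $p\lambda+q\mu$ equals $\gcd(p,q)$, but an embedded oriented $1$-manifold in that class can in principle have more components (null-homotopic circles, or a pair of parallel copies with opposite orientations that cancel in homology). What forces the count is that $S_{(x,y,z)}$ is a fiber: restricting the fibration $N\to S^1$ to a boundary torus $T$ gives a fibration $T\to S^1$, so $\partial S_{(x,y,z)}\cap T$ is a fiber of that bundle, hence a disjoint union of coherently oriented parallel essential circles. Under that constraint the number of components is exactly $\gcd(p,q)$, and your boundary-count argument is then complete.

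The sign issue you flag is exactly the right thing to isolate: the content is that with the orientations of Figure~\ref{fig_3chainmagic}(1) all three pairwise linking numbers agree in sign (as the cyclic symmetry of $\mathcal{C}_3$ suggests), so the meridional coefficient on $T_\alpha$ is $\pm(y+z)$ rather than $\pm(y-z)$, and $\gcd$ is insensitive to the overall sign. Finally, note that both of your sanity checks use classes $(1,1,1)$ and $(1,0,0)$ on $\partial F$ rather than in $\mathscr{C}_F$, so they do not literally instantiate the lemma as stated; they are still legitimate consistency checks on the boundary map $\partial$ and on the norm formula.
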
 

\begin{center}
\begin{figure}[t]
\includegraphics[height=6.5cm]{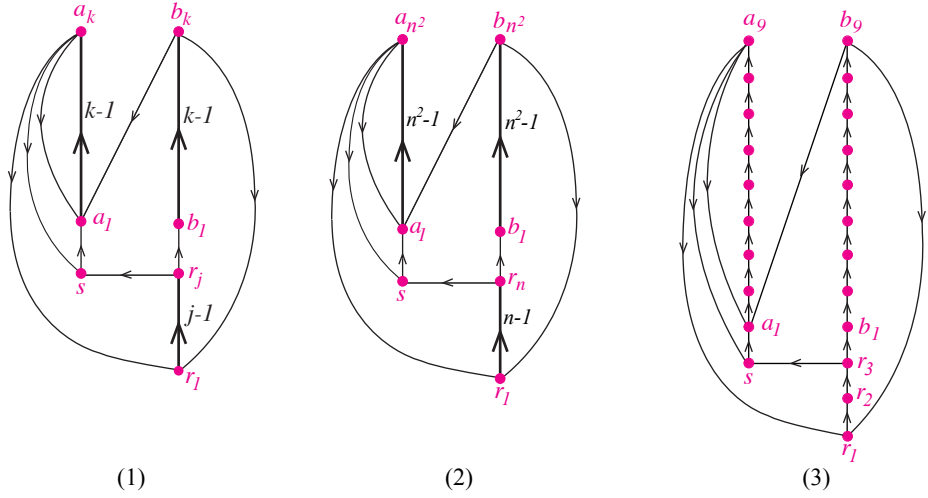}
\caption{Digraphs (1) $\Gamma_{(1,j,k)_+}$,  
(2) $\Gamma_{(1,n,n^2)_+}$  and 
(3) $\Gamma_{(1,3,9)_+}$.} 
\label{fig_digraph_p_original}
\end{figure}
\end{center}

We introduce another coordinate $(i,j,k)_{+} $. 
For  $i,j,k \ge 0$, define
$$(i,j,k)_+= i(1,1,1)+  j(0,1,0)+ k(1,1,0)= (i+k, i+j+k,i). $$
Note that $(1,1,0) \in \mathscr{C}_F$, but $(0,1,0) \notin \mathscr{C}_F$ and $(1,1,1) \notin \mathscr{C}_F$ 
(in fact the two classes lie on  $\partial F$), 
see Figure \ref{fig_3chainmagic}(2). 
We denote by $\overline{(i,j,k)_+}$, the class with the Thurston norm $1$ 
which is projectively equal to $(i,j,k)_+$.

If $i,j,k$ are integers with $i \ge 0$, $j \ge 0$ and $k >0$, then 
 $(i,j,k)_{+} \in \mathscr{C}_F$. 
 If $(i,j,k)_{+}$ is a primitive integral class in $\mathscr{C}_F$, 
 then we let $(S_{(i,j,k)_{+}}, \psi_{(i,j,k)_{+}})$ be the pair of the fiber and its monodromy. 
 In \cite[Section~3]{Kin15}, the second author constructs an invariant train track $\tau= \tau_{(i,j,k)_+}$ 
and the digraph $\Gamma= \Gamma_{(i,j,k)_+}$ 
of the train track map $\psi= \psi_{(i,j,k)_+}: \tau \rightarrow \tau$ 
for each primitive integral class  $(i,j,k)_+  \in \mathscr{C}_F$.  
Figure~\ref{fig_digraph_p_original}(1) illustrates $\Gamma= \Gamma_{(1,j,k)_+}$ when $i=1$, $j>0$ and $k>0$ 
(see also \cite[Figure~22(4)]{Kin15}). 
The vertices in the left column of $\Gamma$ are denoted by 
$s, a_1,  \cdots, a_k$ from bottom to top; 
vertices in the right column of $\Gamma$ are denoted by 
$r_1, \cdots, r_j, b_1, \cdots, b_k$  from bottom to top. 
(Recall that each vertex of $\Gamma$ corresponds to a real branch of $\tau$.) 
The numbers $j-1$ and $k-1$ near the `thick' edges of $\Gamma$ indicate their lengths of paths.  
For instance, the edge $r_1 \stackrel{j-1}{\longrightarrow} r_j$ from $r_1$ to $r_j$ 
indicates the edge path 
$r_1 \rightarrow  \cdots \rightarrow r_{j-1} \rightarrow r_j$. 
See Figure \ref{fig_digraph_p_original}(3) for the concrete example. 
When $j=1$ or $k=1$, the corresponding `thick' edges collapse 
(see Figure~\ref{fig_digraphbraid}).

\medskip
\subsection{Computing the lower bounds} \label{sec:lowerbound}

For fixed positive integers $p$ and $q$, 
we consider the sequence 
$$(1, n^p, n^q)_+ = (1+n^q, 1+ n^p+ n^q,1) \in \mathscr{C}_F$$ 
varying positive integer $n$. 
The integral class $(1, n^p, n^q)_+$ is primitive, 
since $\gcd(1,n^p,n^q) =1$.  
From the formula of the Thurston norm in Lemma \ref{lem_magicformula}, 
it is immediate to see the following lemma. 
See also Figure \ref{fig_3chainmagic}(2). 

\begin{lem}
\label{lem_pq}
Let $\overline{(1,n^p,n^q)_+} $ be the projective class of $(1,n^p,n^q)_+ $. 
\begin{enumerate}
\item[(1)] 
If $p= q$, then 
$\overline{(1,n^p,n^q)_+} \to  (\frac{1}{3}, \frac{2}{3},0)  \in int(F) $ 
as $n \to \infty$. 

\item[(2)] 
If $p< q$, then 
$\overline{(1,n^p,n^q)_+} \to  (\frac{1}{2}, \frac{1}{2},0)  \in int(F) $ 
as $n \to \infty$. 

\item[(3)] 
If $p > q$, then 
$\overline{(1,n^p,n^q)_+} \to  (0,1,0) \in \partial F $ 
as $n \to \infty$. 
\end{enumerate}
\end{lem}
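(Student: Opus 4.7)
The plan is to carry out a direct asymptotic calculation in the $(\alpha,\beta,\gamma)$-coordinates after applying the Thurston norm formula from Lemma~\ref{lem_magicformula}. First I would expand the defining formula to write
\[
(1,n^p,n^q)_+ = 1\cdot(1,1,1) + n^p\cdot(0,1,0) + n^q\cdot(1,1,0) = (1+n^q,\, 1+n^p+n^q,\, 1),
\]
and note that this class lies in $\mathscr{C}_F$ for all positive $n,p,q$ since $x>0$, $y>0$, $x>z$, and $y>z$. Since $\mathscr{C}_F \subset \{x+y-z>0\}$, Lemma~\ref{lem_magicformula} gives
\[
\|(1,n^p,n^q)_+\| = (1+n^q) + (1+n^p+n^q) - 1 = 1 + n^p + 2n^q,
\]
so the projective normalization is obtained by dividing the triple above by $1+n^p+2n^q$.

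Next I would split into the three cases and identify the dominant power of $n$ in the denominator and in each coordinate. In case (1) with $p=q$, the denominator is $1+3n^p$ and the three numerators are $1+n^p$, $1+2n^p$, $1$, so the limit is $(1/3, 2/3, 0)$. In case (2) with $p<q$, the term $n^q$ dominates both the denominator ($\sim 2n^q$) and the first two numerators ($\sim n^q$), while the third numerator stays bounded, yielding the limit $(1/2, 1/2, 0)$. In case (3) with $p>q$, now $n^p$ dominates: it appears in the denominator ($\sim n^p$) and the middle numerator ($\sim n^p$), but the first numerator grows only like $n^q = o(n^p)$, so the limit is $(0,1,0)$.

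Finally I would verify the stated location of each limit using the description $int(F) = \{(x,y,z) : x+y-z=1,\, x>0,\, y>0,\, x>z,\, y>z\}$ recorded just before the lemma. In cases (1) and (2), all four strict inequalities are satisfied, so the limits lie in $int(F)$. In case (3), the limit $(0,1,0)$ violates $x>0$ and so lies on $\partial F$; a glance at Figure~\ref{fig_3chainmagic}(2) confirms that $(0,1,0)$ is in fact a vertex of $F$. The entire argument is bookkeeping once the Thurston norm formula has been invoked, and no step poses a genuine obstacle; the only thing requiring care is correctly tracking which power of $n$ dominates in each of the three regimes.
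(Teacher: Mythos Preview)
Your proposal is correct and follows exactly the approach the paper indicates: it says the lemma is immediate from the Thurston norm formula in Lemma~\ref{lem_magicformula}, and you have simply written out that computation in full. The arithmetic and the identification of the limits with $int(F)$ or $\partial F$ are all accurate.
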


Here we consider 
the following three cases: 
$q< p< 2q$, $p < q \le  2p$ and $2p \le q$. 
We define $$k = k_{p,q} = \begin{cases}
n^q(2n^q+1) & \text{if } q < p < 2q,\\
n^q(2n^p+1) & \text{if } p < q \leq 2p,\\
n^q(2n^{q-p}+1) & \text{if } 2p \leq q.
\end{cases}$$

\begin{prop} 
\label{prop:uniformedgepath} 
For any two vertices $v$, $w$ of 
$\Gamma = \Gamma_{(1,n^p,n^q)_+}$, 
there exists an edge path from $v$ to $w$ of length $k + 2n^p + 3n^q$. 
\end{prop}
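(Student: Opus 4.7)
The plan is computational and hinges on the explicit description of the digraph $\Gamma_{(1,n^p,n^q)_+}$ given in Figure \ref{fig_digraph_p_original}(1), taking $j=n^p$ and the graph-theoretic parameter $k=n^q$ (not to be confused with $k_{p,q}$ in the statement). The strategy is the standard one for uniform-length connectivity problems: select a ``hub'' vertex $v_0$, identify a small collection of directed cycles based at $v_0$ whose lengths have the right arithmetic properties, and then build, for each ordered pair $(v,w)$ of vertices, an edge path of length exactly $L := k_{p,q}+2n^p+3n^q$ by concatenating a short prefix $v\to v_0$, a nonnegative integer combination of fundamental cycles, and a short suffix $v_0\to w$.

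First I would enumerate the natural closed walks at $v_0$ in $\Gamma$. The thick edges of lengths $n^p-1$ and $n^q-1$ give rise to one ``$r$-cycle'' traversing the right-column chain with length on the order of $n^p$, one ``$a$-cycle'' through the left-column chain with length on the order of $n^q$, and one or two longer ``crossing'' cycles whose lengths depend on how the $r$-chain and the $b$-$a$ side interact via the edges joining the two columns. The three-case definition of $k_{p,q}$ is precisely calibrated so that in each regime $k_{p,q}$ equals the length of, or a small integer multiple of, the dominant cycle available there: traversing the $a$-side roughly $n^q$ times when $q<p<2q$, traversing the $r$-side roughly $n^p$ times when $p<q\le 2p$, and traversing a mixed cycle roughly $n^{q-p}$ times when $2p\le q$.

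For each pair $(v,w)$ I would then exhibit a specific minimum-length edge path $v\to v_0\to w$ of length $O(n^p+n^q)$, and pad it up to length exactly $L$ using the fundamental cycles. The summand $2n^p+3n^q$ of $L$ is the slack needed to absorb these prefixes and suffixes uniformly over all vertex pairs, while the summand $k_{p,q}$ accommodates at least one full traversal of the dominant cycle. The main obstacle is the numerical-semigroup problem that arises in each regime: one must verify that $L$ minus the canonical short-path length is a nonnegative integer combination of the chosen cycle lengths, which amounts to checking a gcd condition (so that the correct residue class modulo the gcd of the cycle lengths is reached) together with a Frobenius-type lower bound (so that representability is guaranteed once the residue matches). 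The case split $q<p<2q$, $p<q\le 2p$, $2p\le q$ in the definition of $k_{p,q}$ is dictated precisely by these arithmetic constraints, and verifying the required polynomial inequalities in $n^p$ and $n^q$ in each of the three regimes will be the bulk of the work.
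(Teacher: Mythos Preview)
Your overall strategy is correct and is essentially the one the paper uses: pick a hub vertex, identify a few basic cycles through it, and pad short prefix/suffix paths with cycle combinations to hit the target length exactly. However, two of your specific expectations about the cycle structure are off and would send you down a harder road than necessary.

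First, there is no cycle in $\Gamma$ of length on the order of $n^p$: every directed cycle must traverse the entire left $a$-chain, so every cycle has length at least $n^q$. The three cycles the paper actually uses, all based at $v_0=a_{n^q}$, have lengths $n^q$, $n^q+1$, and $n^p+n^q+1$ (the last being the one that passes through the $r$-chain). Second, and relatedly, $k_{p,q}$ is not the length of a single dominant cycle or a small multiple thereof; in each regime it is of the form $n^q\cdot(2m+1)$ with $m\in\{n^q,n^p,n^{q-p}\}$, i.e.\ it accounts for roughly $2m$ traversals of cycles each of length $\approx n^q$.

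The practical consequence is that you do not need a Frobenius/gcd argument at all. Because two of the basic cycle lengths are consecutive integers $n^q$ and $n^q+1$, the representability problem becomes elementary: to hit a target $T$ with $a\cdot n^q+b\cdot(n^q+1)+c\cdot(n^p+n^q+1)=T$, you set $a+b+c$ equal to the number of cycles and then use $b$ (and, in the latter two regimes, $c$) to adjust the residue by ones and by $n^p+1$'s. The paper simply writes down, in each of the three cases, explicit nonnegative choices of $(a,b,c)$ realizing every needed target; the three-way split in the definition of $k_{p,q}$ is exactly what guarantees $a\ge 0$ in each case. Once you replace your hypothetical $n^p$-cycle with the actual cycles of lengths $n^q$, $n^q+1$, $n^p+n^q+1$, your plan goes through with much less work than a genuine numerical-semigroup analysis.
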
 
In other words, if we set 
$k'=  k_{p,q} + 2n^p + 3n^q$, then 
for any real branch $v$ of $\tau$, $\psi^{k'}(v)$ passes through every real branch. 
For the proof of Proposition \ref{prop:uniformedgepath}, 
we need some lemmas. 
Recall that $s$ is the bottom vertex in the left column of $\Gamma$. 
Let $v_0$ be the top vertex $a_{n^q}$ in the left column of $\Gamma$ 
(Figure \ref{figure:digraph}). 



\begin{lem} 
\label{lem:leftcolumn} 
For any vertex $v$ in the left column of $\Gamma$, there
  exists an edge path from $s$ to $v$ of length $k$. 
\end{lem}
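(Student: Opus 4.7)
The plan is to analyze the explicit digraph $\Gamma = \Gamma_{(1,n^p,n^q)_+}$ shown in Figure~\ref{fig_digraph_p_original}(1) and construct paths from $s$ to any left column vertex $a_i$ by splicing a direct ascending path with closed walks based at $s$ of carefully chosen length. The underlying observation is simple: if $\mathcal{L}$ denotes the set of lengths of closed walks based at $s$ in $\Gamma$, and if the direct ascending path from $s$ to $a_i$ has length $i$, then there exists a path from $s$ to $a_i$ of length $N$ whenever $N - i \in \mathcal{L}$. Thus the task reduces to showing that $k_{p,q} - i \in \mathcal{L}$ for every $i \in \{0, 1, \ldots, n^q\}$ (where we set $a_0 = s$).

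First, I would read off from $\Gamma$ a small family of fundamental cycles through $s$. Based on the structure with long chains of length $n^q - 1$ in the left column and length $n^p - 1$ in the lower portion of the right column (between $r_1$ and $r_{n^p}$), one expects two or three basic cycle lengths, each a low-degree polynomial in $n^p$ and $n^q$. The set $\mathcal{L}$ is then the numerical semigroup generated by these lengths, plus $0$.

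Second, I would verify the decomposition of $k_{p,q} - i$ in $\mathcal{L}$ in each of the three cases. When $q < p < 2q$, the value $k_{p,q} = n^q(2n^q+1) = 2n^{2q} + n^q$ suggests using a cycle of length roughly $2n^q + 1$ traversed $n^q$ times, combined with small adjustments (via a shorter cycle) to absorb the discrepancy of size $i \le n^q$. Similar patterns appear in the cases $p < q \le 2p$ (with $k = n^q(2n^p+1) = 2n^{p+q} + n^q$) and $2p \le q$ (with $k = n^q(2n^{q-p} + 1) = 2n^{2q-p} + n^q$), where different dominant cycles are in play. Each regime is characterized by which of $n^p$ and $n^q$ controls the relevant Frobenius obstruction.

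The main obstacle will be the careful enumeration of the cycle lengths in $\Gamma$ and the verification that in each regime the chosen $k_{p,q}$ really does lie in $i + \mathcal{L}$ for every $i$ from $0$ to $n^q$. This amounts to a Frobenius-type check once the cycle lengths are identified, and the tripartite definition of $k_{p,q}$ reflects three genuinely distinct regimes where different cycles dominate the decomposition. The constants appear to be tuned to be just large enough so that the $n^q + 1$ residue classes modulo the fundamental cycle lengths are all simultaneously accessible, so some tightness is essentially built into the formula.
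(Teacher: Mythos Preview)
Your plan is essentially the paper's own argument: reduce the statement to showing that every length in $\{k-n^q,\ldots,k\}$ is realized by a closed walk in $\Gamma$, then verify this by explicit decomposition in each of the three regimes. The only difference is the choice of base point: you anchor at $s$ and append the ascending path $s\to a_i$ afterward, whereas the paper first ascends $s\to v_0=a_{n^q}$ (length $n^q$) and then works with cycles based at $v_0$, which is marginally cleaner since $v_0$ supports cycles of the \emph{consecutive} lengths $n^q$ and $n^q+1$ (together with $n^p+n^q+1$), making the explicit nonnegative integer solutions $(a,b,c)$ to $an^q+b(n^q+1)+c(n^p+n^q+1)=k-n^q+i$ immediate in each case; with $s$ as base point the shortest cycle has length $n^q+1$, so your Frobenius check uses $\{n^q+1,\,2n^q+1,\,n^p+n^q+1\}$ instead and is a touch more involved, but goes through the same way.
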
 
\begin{proof} 
We have an edge path 
$s \to a_1 \stackrel{n_q-1}{\longrightarrow} a_{n^q} = v_0$ 
from $s$ to $v_0$ of length $n^	q$. 
For the proof of the lemma, 
it suffices to show that for any vertex $v$ in the left column of
$\Gamma$, there exists an edge path from $v_0$ to $v$ of length $k-n^q$. 
Then the desired path can be obtained from the concatenation of the two paths, 
the path from $s$ to  $v_0$ and the path from $v_0$ to $v$.
Equivalently, we  show that for any $i = 0, \ldots, n^q$,
there exists a cycle based at $v_0$ of length $k - n^q + i$. 

It is easy to find two cycles based at $v_0$ in $\Gamma$ of lengths $n^q$ and  $n^q +1$ 
(see Figure \ref{fig_digraph_p_original}(1)). 
We have another cycle based at $v_0$ in $\Gamma$ of length $n^p + n^q + 1$ as follows: 
$$v_0= a_{n^q} \to r_1   \stackrel{n_p-1}{\longrightarrow} r_{n^p} \to s \to a_1  
\stackrel{n_q-1}{\longrightarrow} a_{n^q}= v_0$$
We show that combining repeated use of
these three cycles is enough to produce the cycles we desire. 
Suppose $q<p<2q$. Then $k-n^q=2n^{2q}$. We now show that for
any $i = 0, \ldots, n^q$, there exist nonnegative integers $a$, $b$, and $c$ such
that 
$$a n^q + b (n^q + 1) + c (n^q + n^p + 1)= 2n^{2q} + i.$$
This is done by setting $c=0$,  $ b=i$ and $a=2n^q-i$.   

Suppose $p < q \leq 2p$. 
Then $k-n^q = 2n^{p+q}$. 
We claim that for
any $i = 0, \ldots, n^q$, there exist nonnegative integers $a$, $b$, and $c$ such
that 
$$a n^q + b (n^q + 1) + c (n^q + n^p + 1)= 2n^{p+q} + i.$$
This can be done by setting 
$$c=\lfloor {i\over n^p+1}\rfloor,\hspace{2mm}  b=i-(n^p+1)\lfloor {i\over n^p+1}\rfloor,\hspace{2mm}  a=2n^p-b-c,$$ 
where $\lfloor  \cdot \rfloor$ is the floor function.  
Here $b$ and $c$ are nonnegative integers by definition, and 
$b$ is the remainder of $i$ divided by $n^p+1$. 
Hence $b$ must be no larger than $n^p$. 
On the other hand $c\leq n^{q-p}$, because $i\leq n^q<n^{q-p}(n^p+1)$. 
Thus $b+c \leq n^p+n^{q-p}\leq 2n^p$, which implies that $a$ is nonnegative. 

Lastly, suppose $2p\leq q$. 
Then $k-n^q = 2n^{2q-p}$. 
We claim that for
any $i = 0, \ldots, n^q$, there exist  nonnegative integers $a$, $b$, and $c$ such
that 
$$a n^q + b (n^q + 1) + c (n^q + n^p + 1)= 2n^{2q-p} + i.$$
This can be done by setting 
$$c=\lfloor {i\over n^p+1}\rfloor, \hspace{2mm} b=i-(n^p+1)\lfloor {i\over n^p+1}\rfloor, \hspace{2mm} a=2n^{q-p}-b-c.$$ 
Here $b$ and $c$ are nonnegative integers by definition, 
and $b$ is the remainder of $i$ divided by $n^p+1$. 
Hence $b$ must be no larger than $n^p$. 
On the other hand $c\leq n^{q-p}$, because $i\leq n^q<n^{q-p}(n^p+1)$. 
Thus $b+c\leq n^p+n^{q-p}\leq 2n^{q-p}$, which says that $a$ is nonnegative. 
This finishes the proof. 
\end{proof}

\begin{lem}
\label{lem:leftcolumnext} For any vertex $v$ in the left column of
$\Gamma$ and for any $m \geq 0$, there exists an edge path from $s$ to
$v$ of length $k + m$. 
\end{lem}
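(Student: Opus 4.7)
The plan is to augment the path from Lemma \ref{lem:leftcolumn} by inserting extra copies of the left-column cycle
\[v_0 = a_{n^q} \to s \to a_1 \to \cdots \to a_{n^q} = v_0\]
of length $n^q + 1$ at the basepoint $v_0$. Unpacking the proof of Lemma \ref{lem:leftcolumn}, for each $i \in \{0, 1, \ldots, n^q\}$ one actually produces a cycle $C_i$ based at $v_0$ of length $k - n^q + i$; these cycles, via the stated path-cycle equivalence, yield the paths of length $k$ from $s$ to the respective vertices in the left column. Thus we have at our disposal $n^q + 1$ cycles at $v_0$ of consecutive lengths $k - n^q, k - n^q + 1, \ldots, k$, together with the auxiliary left-column cycle of length $n^q + 1$.

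Given a vertex $v$ in the left column with associated index $i_v \in \{0, \ldots, n^q\}$ and any $m \geq 0$, I would write $m + i_v = a(n^q + 1) + r$ with $a \geq 0$ and $0 \leq r \leq n^q$ by long division. Then I would concatenate $a$ copies of the length-$(n^q + 1)$ left-column cycle with the base cycle $C_r$, inserting the extra copies immediately after the initial traversal of $v_0$. The result is a cycle at $v_0$ of length $a(n^q + 1) + (k - n^q + r) = k - n^q + i_v + m$. Applying the same path-cycle equivalence as in Lemma \ref{lem:leftcolumn} to this augmented cycle produces a path $v_0 \to v$ of length $k - n^q + m$, and prepending the length-$n^q$ path $s \to a_1 \to \cdots \to a_{n^q} = v_0$ yields the desired path from $s$ to $v$ of length $k + m$.

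The main point requiring care is that the augmentation must preserve the terminal segment of $C_r$ on which the path-cycle equivalence depends. Since the extra $(n^q + 1)$-cycles are inserted at the initial occurrence of $v_0$, they do not disturb the final portion of $C_r$, and the equivalence continues to yield a path ending at the correct vertex $v$. Beyond checking that the index $i_v$ is consistent between the original and padded constructions, I expect no further obstacles; note in particular that no Chicken McNugget / Frobenius-style bound is required because the $n^q + 1$ consecutive base lengths from Lemma \ref{lem:leftcolumn} exactly span one full period of $n^q + 1$, so every $m \geq 0$ is immediately handled by a single long division.
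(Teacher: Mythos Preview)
Your approach is more intricate than the paper's and has a gap in the case $v = s$. The ``path--cycle equivalence'' you invoke amounts to truncating the last $i_v$ edges of a cycle at $v_0$; for $i_v \le n^q - 1$ this is safe because the terminal $n^q - 1$ edges of \emph{any} cycle at $v_0$ are forced to be $a_1 \to \cdots \to a_{n^q}$ (each $a_j$ with $j\ge 2$ has a unique in-edge). But for $i_v = n^q$ the vertex $n^q$ steps before the end need not be $s$: it depends on which basic cycle terminates $C_r$, and $r$ is in general unrelated to $i_v$. Concretely, take $q < p < 2q$, $v = s$, $m = 1$: then $m + i_v = n^q + 1$ gives $a = 1$, $r = 0$, and in the construction of Lemma~\ref{lem:leftcolumn} the cycle $C_0$ consists of $2n^q$ copies of the $n^q$-cycle $v_0 \to a_1 \to \cdots \to a_{n^q}$; truncating your augmented cycle by $n^q$ edges then lands at $v_0$, not at $s$. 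Your final paragraph conflates the terminal segment relevant to truncating $C_r$ by $r$ with that relevant to truncating by $i_v$. The repair is easy---pad with the $(n^q+1)$-cycles at the \emph{end} rather than the beginning, so the terminal $n^q$ edges become $s \to a_1 \to \cdots \to a_{n^q}$---but as written the argument is incomplete.

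The paper's proof is shorter and avoids reopening Lemma~\ref{lem:leftcolumn}. Given $v$ and $m \ge 0$, it observes that every left-column vertex has a left-column predecessor, so one may walk backward $m$ steps from $v$ to some left-column vertex $v'$; Lemma~\ref{lem:leftcolumn} then supplies a path $s \to v'$ of length $k$, and concatenating with the length-$m$ path $v' \to v$ gives the desired path of length $k+m$.
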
 
\begin{proof} 
Let $v$ be any vertex in the left column of $\Gamma$. 
For any $m \geq 0$, one can find a vertex $v'$ in the left column of $\Gamma$ 
such that there is an edge path from $v'$ to $v$ of length $m$. 
(To see this, use the above cycles based at $v_0$ of lengths $n^q$ and $n^q +1$.) 
Lemma \ref{lem:leftcolumn} tells us that 
there exists an edge path from $s$ to $v'$ of length $k$. 
The concatenation of these edge paths is a desired edge path of length $k+m$. 
\end{proof}

\begin{lem}
\label{lem:rightcolumn} For any vertex $v$ in the right column of
$\Gamma$ and for any $m \geq 0$, there exists an edge path from $s$ to
$v$ of length $k + n^p + n^q + m$. 
\end{lem}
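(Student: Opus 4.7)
The strategy is to reduce the right-column case to the left-column case already handled in Lemma \ref{lem:leftcolumnext}. Concretely, I will show that for every vertex $v$ in the right column of $\Gamma = \Gamma_{(1,n^p,n^q)_+}$ there exist a vertex $u_v$ in the left column and an edge path $\pi_v$ from $u_v$ to $v$ whose length $\ell(\pi_v)$ satisfies $0 \le \ell(\pi_v) \le n^p + n^q$. Once this is established, Lemma \ref{lem:leftcolumnext} applied with the value $m' = m + (n^p + n^q - \ell(\pi_v)) \geq 0$ produces an edge path from $s$ to $u_v$ of length $k + m'$, and concatenating with $\pi_v$ yields an edge path from $s$ to $v$ of length $k + n^p + n^q + m$, as desired.

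The first step is therefore to exhibit these bridging paths $\pi_v$ by direct inspection of $\Gamma$ (Figure \ref{fig_digraph_p_original}(1)). For a vertex of the form $v = r_i$ with $1 \le i \le n^p$, set $u_v = v_0 = a_{n^q}$ and take $\pi_v$ to be the initial segment $v_0 \to r_1 \to \cdots \to r_i$ of the big cycle at $v_0$; this has length $i \le n^p \le n^p + n^q$. For a vertex of the form $v = b_i$ with $1 \le i \le n^q$, I will use the edges from the left column into the $b$-subcolumn that produce the length-$n^q$ cycle at $v_0$ exploited already in the proof of Lemma \ref{lem:leftcolumn}, concatenated with the chain $b_j \to b_{j+1}$, to obtain a bridging path of length at most $n^q \le n^p + n^q$.

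Second, the case analysis of Lemma \ref{lem:leftcolumn} gives us that $k$ is large enough to absorb the corrective term $n^p + n^q - \ell(\pi_v)$ inside the additive $m$ slot of Lemma \ref{lem:leftcolumnext}; thus the concatenation argument goes through uniformly in $v$ and $m$. The resulting path from $s$ to $v$ has total length $k + (n^p + n^q - \ell(\pi_v)) + m + \ell(\pi_v) = k + n^p + n^q + m$, which completes the proof.

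The main obstacle is the careful verification that every right-column vertex, in particular every $b_i$, admits a bridging path of length at most $n^p + n^q$ from some left-column vertex. This requires chasing the precise edges of $\Gamma$ in Figure \ref{fig_digraph_p_original}(1), and separating the corner cases where the ``thick'' edges labelled $j-1$ or $k-1$ collapse (for instance when $n^p = 1$ or $n^q = 1$). Once these bridges are produced explicitly, the reduction to Lemma \ref{lem:leftcolumnext} is immediate and parallels the pattern of the previous two lemmas.
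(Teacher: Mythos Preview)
Your reduction to Lemma \ref{lem:leftcolumnext} via a bridging path from a left-column vertex is exactly the paper's approach; the paper simply takes $u_v = v_0 = a_{n^q}$ for \emph{every} right-column vertex and uses the single path
\[
v_0 \to r_1 \to \cdots \to r_{n^p} \to b_1 \to \cdots \to b_{n^q},
\]
whose initial segment of length $\ell$ (with $1 \le \ell \le n^p + n^q$) reaches any given $r_i$ or $b_i$.

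There is one concrete slip in your execution, precisely at the point you yourself flag as the ``main obstacle.'' For the $b_i$ vertices you invoke ``the edges from the left column into the $b$-subcolumn that produce the length-$n^q$ cycle at $v_0$,'' but no such edges exist: the length-$n^q$ cycle at $v_0$ is $v_0 \to a_1 \to \cdots \to a_{n^q} = v_0$, which lies entirely in the left column. Inspecting Figure \ref{figure:digraph}, the only edges entering the $b$-chain come from $r_{n^p}$ (namely $r_{n^p} \to b_1$), so the unique short bridge from the left column to $b_i$ is $v_0 \to r_1 \to \cdots \to r_{n^p} \to b_1 \to \cdots \to b_i$, of length $n^p + i \le n^p + n^q$. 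Once you replace your $b_i$ bridge by this one, your argument is complete and coincides with the paper's; in particular there is no need to vary $u_v$ or to worry about the collapsed thick-edge corner cases, since the single path above works uniformly.
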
 

\begin{proof} 
  Let $v$ be an arbitary vertex in the right
  column of $\Gamma$. Then there exists an edge path from $v_0$ to $v$
  of length $\ell$ with $1 \leq \ell \leq n^p + n^q$. 
 To see this, use the path 
 $$v_0= a_{n^q} \to r_1 \stackrel{n_p-1}{\longrightarrow} r_{n^q} \to b_1 \stackrel{n_q-1}{\longrightarrow} b_{n^q} $$
 from $v_0$ to $b_{n^q} $. 
On the other hand, Lemma \ref{lem:leftcolumnext} tells us that 
there exists an edge path from $s$
to $v_0$ of length $k + (n^p + n^q - \ell) + m $. 
Here $(n^p + n^q -\ell) + m$ plays the role of $m$ in Lemma \ref{lem:leftcolumnext}. 
Concatenating these two
paths, one obtains an edge path from $s$ to $v$ of length $k + n^p +
n^q + m$. 
\end{proof} 

By Lemmas \ref{lem:leftcolumnext} and \ref{lem:rightcolumn}, 
we immediately have the following lemma. 

\begin{lem}
\label{lem:keylem}
For any vertex $v$ of $\Gamma$ and for any $m \geq 0$, 
there exists an edge path from $s$ to $v$ of length $k+n^p+n^q +m$. 
\end{lem}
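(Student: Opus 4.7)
The plan is a direct case analysis on which column of $\Gamma = \Gamma_{(1, n^p, n^q)_+}$ contains the target vertex $v$. From the description of $\Gamma$ given earlier in the paper, every vertex of $\Gamma$ lies in either the left column (the vertices $s, a_1, \ldots, a_{n^q}$) or the right column (the vertices $r_1, \ldots, r_{n^p}, b_1, \ldots, b_{n^q}$), so these two cases are exhaustive.

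If $v$ belongs to the right column, then Lemma \ref{lem:rightcolumn} applied directly to $v$ with the given $m \geq 0$ produces an edge path from $s$ to $v$ of length exactly $k + n^p + n^q + m$, which is what is required.

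If instead $v$ belongs to the left column, then I would apply Lemma \ref{lem:leftcolumnext} with the parameter $m'$ chosen to be $m' := n^p + n^q + m$. Since $p, q \geq 1$ and $m \geq 0$, this $m'$ is a nonnegative integer, so the hypothesis of Lemma \ref{lem:leftcolumnext} is satisfied, and it yields an edge path from $s$ to $v$ of length $k + m' = k + n^p + n^q + m$, again as required.

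There is no real obstacle here: the lemma is essentially a bookkeeping corollary packaging the two previous lemmas into a single uniform statement that is agnostic to which column $v$ sits in. The only thing worth verifying is the exhaustiveness of the column decomposition of the vertex set of $\Gamma$, which is a direct consequence of the combinatorial description of $\Gamma_{(1, n^p, n^q)_+}$ recalled before Lemma \ref{lem:leftcolumn}.
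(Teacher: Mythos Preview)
Your proof is correct and matches the paper's approach exactly: the paper simply states that the lemma follows immediately from Lemmas \ref{lem:leftcolumnext} and \ref{lem:rightcolumn}, and your case analysis on the two columns is precisely the intended unpacking of that remark.
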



We are now ready to prove Proposition \ref{prop:uniformedgepath}. 

\begin{proof}[Proof of Proposition \ref{prop:uniformedgepath}] 
Note that for any vertex $v$, there exists an edge path
  from $v$ to $s$ of length $0 \leq \ell  \leq n^p + 2n^q$. 
  To see this, one can use the following edge path of length $n^p + 2n^q$ 
  passing through all vertices of $\Gamma$. 
  $$r_1 \stackrel{n_p-1}{\longrightarrow} r_{n^q} \to b_1 \stackrel{n_q-1}{\longrightarrow} b_{n^q} \to a_1 \stackrel{n_q-1}{\longrightarrow} a_{n^q} \to s.$$
  By Lemma \ref{lem:keylem} there exists an edge path from $s$ to any vertex $w$
  of length exactly $k + (2n^p + 3n^q - \ell)$, since $2n^p + 3n^q -
  \ell \geq n^p + n^q$.  The concatenation of the two paths has length $k + 2n^p + 3n^q$. 
\end{proof} 

Now we are ready to compute the lower bounds.
For real-valued functions $A(x)$ and $B(x)$, we write $A(x) \gtrsim B(x)$ 
if there is a constant $C>0$ independent of $x$ such that
$A(x) \geq C \cdot B(x)$.

\begin{thm} \label{thm:lowerbound} 
The sequence $(1, n^p, n^q)_+$  in $ \mathscr{C}_F$ satisfies 
$$ \ell_\mathcal{C}(\psi_{(1, n^p, n^q)_+}) \gtrsim 
\begin{cases}
1/n^{2q} & \text{if } q < p < 2q,\\
1/n^{p+q} & \text{if } p < q \leq 2p,\\
1/n^{2q-p} & \text{if } 2p  \leq q.
\end{cases} $$ 
\end{thm}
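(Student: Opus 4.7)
The proof is essentially an application of the Gadre--Tsai estimate (Proposition \ref{prop_lowerbound}) combined with the uniform edge-path bound already established in Proposition \ref{prop:uniformedgepath}. My plan is to assemble these pieces and then analyze the three cases by identifying the dominant asymptotic term.

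\textbf{Step 1: Identify the relevant integer $r$.} By Proposition \ref{prop:uniformedgepath}, for any two vertices $v,w$ of the digraph $\Gamma = \Gamma_{(1,n^p,n^q)_+}$ there is an edge path from $v$ to $w$ of length exactly $r := k_{p,q} + 2n^p + 3n^q$. Translating back to the invariant train track $\tau_{(1,n^p,n^q)_+}$, this means that for every real branch $e$ of $\tau$, $\psi^{r}(e)$ traverses every real branch of $\tau$. This is exactly the hypothesis of Proposition \ref{prop_lowerbound}.

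\textbf{Step 2: Compute the Euler characteristic of the fiber.} Using the coordinate identity $(1,n^p,n^q)_+ = (1+n^q,\, 1+n^p+n^q,\, 1)$ together with Lemma \ref{lem_magicformula}, we have
\[
|\chi(S_{(1,n^p,n^q)_+})| = \|(1+n^q,\, 1+n^p+n^q,\, 1)\| = 1+n^p+2n^q.
\]

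\textbf{Step 3: Apply Gadre--Tsai.} Combining Steps 1 and 2 with Proposition \ref{prop_lowerbound},
\[
\ell_{\mathcal{C}}(\psi_{(1,n^p,n^q)_+}) \;\ge\; \frac{1}{r + 30\,|\chi(S_{(1,n^p,n^q)_+})|} \;=\; \frac{1}{k_{p,q} + 2n^p + 3n^q + 30(1+n^p+2n^q)}.
\]

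\textbf{Step 4: Extract the dominant term case by case.} It remains to observe, for each regime of $(p,q)$, which term in the denominator dominates as $n\to\infty$:
\begin{itemize}
\item If $q < p < 2q$, then $k_{p,q} = n^q(2n^q+1) \asymp n^{2q}$, and since $p<2q$ we have $n^p = o(n^{2q})$; hence the denominator is $\asymp n^{2q}$.
\item If $p < q \le 2p$, then $k_{p,q} = n^q(2n^p+1) \asymp n^{p+q}$, and $n^p,\, n^q$ are both $O(n^{p+q})$; hence the denominator is $\asymp n^{p+q}$.
\item If $2p \le q$, then $k_{p,q} = n^q(2n^{q-p}+1) \asymp n^{2q-p}$, and $2q-p \ge q \ge p$, so the other terms are absorbed; hence the denominator is $\asymp n^{2q-p}$.
\end{itemize}
In each case this yields the stated lower bound $\ell_{\mathcal{C}}(\psi_{(1,n^p,n^q)_+}) \gtrsim 1/n^{2q}$, $1/n^{p+q}$, or $1/n^{2q-p}$ respectively.

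There is no serious obstacle here: essentially all the work has already been done in Proposition \ref{prop:uniformedgepath} (producing the uniform length-$r$ edge path) and in the magic manifold Thurston-norm formula. The only thing to be careful about is the elementary arithmetic in Step 4, where one verifies that $k_{p,q}$ genuinely dominates both $n^p+n^q$ and $|\chi|\asymp n^p + n^q$ in each regime, so that adding $30|\chi|$ does not change the order of growth of the denominator.
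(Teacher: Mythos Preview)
Your proposal is correct and follows essentially the same route as the paper's own proof: apply Proposition \ref{prop:uniformedgepath} to obtain $r = k_{p,q} + 2n^p + 3n^q$, compute $|\chi(S_{(1,n^p,n^q)_+})| = 1 + n^p + 2n^q$ via Lemma \ref{lem_magicformula}, feed both into the Gadre--Tsai estimate (Proposition \ref{prop_lowerbound}), and read off the dominant term in each regime. The paper compresses Steps 2--4 into a single displayed asymptotic, whereas you spell out the arithmetic explicitly, but there is no substantive difference.
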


\begin{proof}
By Lemma \ref{lem_magicformula}, 
it is not hard to see that 
$$(k_{p,q} + 2n^p + 3n^q) + 30 |\chi(S_{(1, n^p, n^q)_+})| \asymp 
\begin{cases}
n^{2q} & \text{if } q < p < 2q,\\
n^{p+q} & \text{if } p < q \leq 2p,\\
n^{2q-p} & \text{if } 2p  \leq q.
\end{cases} $$
Then the desired claim follows from 
Propositions  \ref{prop_lowerbound} and \ref{prop:uniformedgepath}.
\end{proof}

\medskip
\subsection{Computing the upper bounds} \label{sec:upperbound}

To prove Theorem \ref{thm:allrational}, we will also compute the upper bound of the asymptotic translation length 
of $\psi_{(1,n^p, n^q)_+}$.

\begin{thm} \label{thm:upperbound}
For any fixed positive integers $p$ and $q$ with $q<p<2q$, 
the sequence  $ (1,n^p, n^q)_+$ of primitive integral classes in $ \mathscr{C}_F$ 
converges projectively to $(0,1,0) \in \partial F$ as $n \to \infty$, and we have 
$$\ell_{\C}(\psi_{(1,n^p,n^q)_+}) \leq \frac{4}{n^{2q}}.$$
\end{thm}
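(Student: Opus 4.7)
The plan is first to verify projective convergence by a direct Thurston-norm computation, and then to obtain the upper bound via a two-direction refinement of Proposition~\ref{lem:seq2}. By Lemma~\ref{lem_magicformula} the Thurston norm of $(1,n^p,n^q)_+ = (1+n^q,\, 1+n^p+n^q,\, 1)$ equals $1+n^p+2n^q$; since $p>q\ge 1$, this norm and the second coordinate are both dominated by $n^p$, while the first and third coordinates are of lower order. Dividing the coordinates by the norm therefore yields a sequence converging to $(0,1,0)\in\partial F$.

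For $\ell_{\C}(\psi)\le 4/n^{2q}$, a single-direction decomposition $(1,n^p,n^q)_+ = \alpha + N\beta$ is insufficient because the cone inequalities force $N\le n^p$ in the best case, yielding only $\ell_\C \le 2/(n^p-1)$, which is weaker than $4/n^{2q}$ when $p<2q$. The idea is to use a two-direction decomposition
\[
(1,n^p,n^q)_+ \;=\; \alpha + N_1\beta_1 + N_2\beta_2
\]
with $\beta_1=(0,1,0)$ and $\beta_2=(1,1,0)$ in $\overline{\mathscr{C}_F}$. Requiring $\alpha = (1+n^q-N_2,\, 1+n^p+n^q-N_1-N_2,\, 1)\in\mathscr{C}_F$ gives the constraints $N_2\le n^q-1$ and $N_1+N_2\le n^p+n^q-1$, so one can take $N_1=n^p$ and $N_2=n^q-1$. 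Since $\gcd(N_1,N_2)=\gcd(n^p,n^q-1)$ divides $\gcd(n^q,n^q-1)=1$, the integers $N_1$ and $N_2$ are coprime.

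We then adapt the cut-and-paste construction from the proof of Proposition~\ref{lem:seq2}: take a minimal representative of $\alpha$ together with $N_1$ parallel copies of $S_{\beta_1}$ and $N_2$ parallel copies of $S_{\beta_2}$, and surger along intersection loci to obtain $S=S_{(1,n^p,n^q)_+}$. An essential intersection curve or arc $c$ in $S_{\beta_1}\cap S_{\beta_2}$ yields a two-dimensional grid of $N_1\cdot N_2$ pairwise disjoint parallel copies on $S$, indexed by $\mathbb{Z}/N_1\times\mathbb{Z}/N_2$. The first-return map of the (blown-up) suspension flow shifts this grid by $(+1,+1)$, and coprimality of $N_1,N_2$ then makes the $\psi$-orbit of $c$ traverse all $N_1 N_2$ positions. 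Hence $c,\psi(c),\ldots,\psi^{N_1 N_2-1}(c)$ are pairwise disjoint, and the $2$-bilipschitz embedding $\C(S)\to\AC(S)$ used in the proof of Proposition~\ref{lem:seq2} gives
\[
\ell_{\C}(\psi) \;\le\; \frac{2}{N_1 N_2 - 1} \;=\; \frac{2}{n^p(n^q-1) - 1} \;\le\; \frac{4}{n^{2q}}
\]
for $n$ sufficiently large, using $p\ge q+1$.

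The main obstacle is the two-direction cut-and-paste itself, and in particular showing that the first-return map of a single (possibly blown-up) suspension flow $\hat{\mathcal{F}}$ translates the grid by $(+1,+1)$ rather than by some more complicated shifts. This requires simultaneously making both $S_{\beta_1}$ and $S_{\beta_2}$ transverse to one dynamic blow-up of $\mathcal{F}$, and tracking how flowlines thread through the interleaved copies of $S_{\beta_1}$ and $S_{\beta_2}$; the tools available for this are the transverse surface theorem of Mosher--Landry and the explicit description of the fibered-cone flow used in the proof of Proposition~\ref{lem:seq2}. A secondary, easier point is that the specific choice $N_1=n^p$, $N_2=n^q-1$ automatically furnishes the coprimality needed for a single grid orbit.
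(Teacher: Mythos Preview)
Your verification of projective convergence is fine. The upper-bound argument, however, is not merely incomplete at the ``main obstacle'' you flag---it produces a conclusion that is provably false. If the two-direction cut-and-paste worked as you describe, the intermediate bound would be $\ell_{\C}(\psi)\le 2/(N_1N_2-1)$ with $N_1N_2=n^p(n^q-1)\asymp n^{p+q}$, and since $p>q$ this gives $\ell_{\C}(\psi)\lesssim 1/n^{p+q}$, asymptotically strictly smaller than $1/n^{2q}$. But Theorem~\ref{thm:lowerbound} establishes the lower bound $\ell_{\C}(\psi_{(1,n^p,n^q)_+})\gtrsim 1/n^{2q}$ in exactly the range $q<p<2q$. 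These are incompatible for large $n$, so the first-return map cannot act on the $\Z/N_1\times\Z/N_2$ grid by the $(+1,+1)$ shift you posit.

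The geometric reason the one-direction argument of Proposition~\ref{lem:seq2} does not extend is that there the consecutive parallel copies $S_i,S_{i+1}$ of $S_\beta$ have no other surgered sheets between them, so the first return from a curve on $S_i$ is genuinely parallel transport to $S_{i+1}$. With two transverse families of parallel copies interleaved near the intersection locus $S_{\beta_1}\cap S_{\beta_2}$, flowing forward from a grid curve $c_{(i,j)}$ first meets a nearby sheet of the surgered surface that is in general \emph{not} another grid curve; nothing forces the $\psi$-orbit of $c$ to traverse all $N_1N_2$ grid positions.

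For comparison, the paper's proof is specific to the magic manifold and uses the explicit invariant train track $\tau=\tau_{(1,n^p,n^q)_+}$ and its digraph $\Gamma$ from \cite{Kin15}. One shows combinatorially that for the real branch $t=b_{n^q}$ the iterate $\psi_*^{n^{2q}}(t)$ on $V(\Gamma)$ avoids the real branch $v=r_1$; then, using a distinguished ideal polygon fixed by $\psi$, one builds essential arcs $\beta_t,\alpha_v$ with $i(\beta_t,\alpha_v)=i(\psi^{n^{2q}}(\beta_t),\alpha_v)=0$ and $\beta_t\ne\alpha_v\ne\psi^{n^{2q}}(\beta_t)$ in $\AC(S)$. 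This yields $d_{\AC}(\beta_t,\psi^{n^{2q}}(\beta_t))\le 2$, hence $\ell_{\AC}(\psi)\le 2/n^{2q}$ and $\ell_{\C}(\psi)\le 4/n^{2q}$.
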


The first half of Theorem \ref{thm:upperbound} follows from Lemma \ref{lem_pq}(3). 
For the rest of the proof, we first introduce the dual arcs of real branches of train tracks. 
Consider an invariant train track $\tau$ for the monodromy $\psi$ defined on the fiber $S$ of a fibration on $N$. 
If we think of the surface $S$ with boundary as the punctured surface which is again denoted by $S$ abusing the notation,  
each component of the complement $S \setminus \tau$ of the train track
is a once-punctured ideal polygon, because $\psi$ is fully punctured. 
Consider the cell decomposition of $S$ corresponding to $\tau$.
That is, 0-cells are switches of $\tau$, 1-cells are branches of $\tau$, and 2-cells are ideal polygons
of $S \setminus \tau$.

Given a real branch $v$, the \textit{dual arc} $\alpha_v$ of $v$ is defined to be
the edge of the dual cell complex that connects the punctures in two polygons (possibly the same polygon) 
sharing the real branch $v$ (see Figure \ref{figure:dualarcs}).

\begin{figure}[h]
\centering
\begin{tikzpicture}[scale=0.25]
	\draw (0,0) to [out=70, in=-70] (0,10);
	\draw (0,0) to [out=20, in=160] (10,0);
	\draw[thick, red] (10,0) to [out=110, in=-110] (10,10);
	\draw[thick,black!30!green] (0,10) to [out=-20, in=-160] (10,10);
	
	\draw (10,10) to [out=-50, in=-160] (16,9);
	\draw (16,9) to [out=-90, in=160] (18,5);	
	\draw (18,5) to [out=-160, in=90] (16,1);
	\draw (16,1) to [out=160, in=50] (10,0);
	
	\draw (0,10) to [out=0,in=-10] (0, 14);
	\draw (10,10) to [out=180,in=-170] (10,14);
	\draw (0,14) to [out=0,in=-90] (2,17);
	\draw (10,14) to [out=180,in=-90] (8,17);
	\draw (2,17) to [out=-45,in=-135] (8,17);

	\draw[thick, blue] (5.2,5) to (12.8,5);
	\draw[thick, violet] (5,5.2) to (5, 12.8);
	
	\draw (5,5) circle [radius=.2];
	\draw (13,5) circle [radius=.2];
	\draw (5, 13) circle [radius=.2];
	
	\draw (1, 16) node {$\tau$};
	\draw[,black!30!green] (3,8.2) node {$s$};
	\draw[red] (8.5,7.5) node {$v$};
	\draw[blue] (11,4) node {$\alpha_v$};
	\draw[violet] (6.3,11) node {$\alpha_s$};
	
\end{tikzpicture}
\caption{Cell decomposition, branches, and dual arcs.}
\label{figure:dualarcs}
\end{figure}
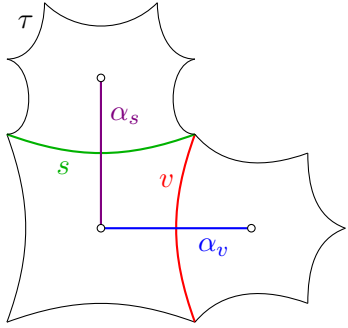

Notice that the dual arc $\alpha_v$ is an essential arc.
In order to see this, consider a rectangle associated with the real branch $v$, contained in a Markov partition
for a pseudo-Anosov homeomorphism which represents $\psi$. 
Then $v$ corresponds to leaves of the unstable foliation and the dual arc $\alpha_v$ corresponds to leaves of 
the stable foliation in this rectangle.
If the dual arc is not essential, then this implies that the real branch $v$ cannot support a positive transverse measure,
which is a contradiction to a property of pseudo-Anosov homeomorphisms. 

Readers may notice that  the dual arc associated to a real branch is a general notion for fully punctured pseudo-Anosov homeomorphisms. 
More precisely, 
if $\tau$ is an invariant train track for a fully punctured pseudo-Anosov $\psi$, 
then for a real branch $v$ of $\tau$, 
one can define the dual arc $\alpha_v$ which is essential.

\begin{proof}[Proof of Theorem \ref{thm:upperbound}]
Let $(S, \psi) = (S_{(1,n^p,n^q)_+}, \psi_{(1,n^p,n^q)_+})$ 
be the pair of the fiber and its monodromy for  $(1,n^p,n^q)_+$. 
Let $\Gamma$ be the digraph of the train track $\tau$ for  $(1,n^p,n^q)_+$, 
and let
$\psi_{\ast} : V(\Gamma) \ra V(\Gamma)$ be the induced map, where
$V(\Gamma)$ is the set of vertices of $\Gamma$.
The map $\psi_{\ast}$ can be read off Figure $\ref{figure:digraph}$.

Here is the outline of the proof. 
We will compute the upper bound of the asymptotic translation length $\ell_{\AC}(\psi)$ of $\psi$ 
on the arc and curve complex $\AC(S)$.
Since $\C(S)$ and $\AC(S)$ are quasi-isometric, this gives an upper bound on $C(S)$.
We show that there are distinct vertices $t$ and $v$ in $\Gamma$, 
i.e., distinct real branches  $t$ and $v$ of $\tau$, such that $\psi_{\ast}^{n^{2q}}(t)$ doesn't contain $v$.
Using this fact, we also show that there are disjoint arcs $\beta_t$ and $\alpha_v$ in $\AC(S)$ 
such that $\psi^{n^{2q}}(\beta_t)$ and $\alpha_v$ are disjoint.
This implies that the distance in $\AC(S)$ satisfies 
$d_{\AC}(\beta_t, \psi^{n^{2q}}(\beta_t)) \leq 2$, and we deduce that
$\ell_{\AC}(\psi) \leq \dfrac{2}{n^{2q}}$.

%
%
\vspace{1em}
\textbf{Step 1.} \textit{$\C(S)$ and $\AC(S)$ are quasi-isometric.} 

\vspace{.5em}
Proof of Step 1. Just recall that the inclusion map $\C(S) \ra \AC(S)$ is 2-bilipschitz. 

Hence for the proof of Theorem \ref{thm:upperbound}, 
it is enough to show that the asymptotic translation length $\psi$ on $\AC(S)$ satisfies
$$\ell_{\AC}(\psi) \leq \frac{2}{n^{2q}}.$$

%
%
\vspace{1em}
\textbf{Step 2.} \textit{Let $t $ be the vertex $b_{n^q}$ of $\Gamma$. 
Then $\psi_{\ast}^{n^{2q}}(t)$ doesn't contain all vertices in $\Gamma$.}

\tikzset{->-/.style={decoration={
  markings,
  mark=at position #1 with {\arrow[scale=2]{>}}},postaction={decorate}}}
\begin{figure}[t]
\centering
\begin{tikzpicture}[scale=.6]

	\fill[red!20] (9.5,.5) rectangle (10.5, 4.5);
	\fill[blue!20] (11.5,.5) rectangle (12.5, 4.5);
	\fill[purple!20] (11.5,-7.5) rectangle (12.5, -3.5);
	\fill[green!20] (11.5,-3.5) rectangle (12.5, .5);

	\foreach \z in {0,10}{
	\foreach \x in {0,2} {
	\foreach \y in {0,1,2,3} {
	\draw[->-=.5] (\x+\z,\y) to (\x+\z,\y+1);
	}}
	\foreach \y in {-1,-2,-3,-4,-5,-6,-7} 
		\draw[->-=.5] (2+\z,\y) to (2+\z,\y+1);
	\draw[->-=.5] (2+\z,0) to (0+\z,0);
	\draw[->-=.5] (2+\z,4) to (0+\z,1);
	\draw[->-=.5] (2+\z,4) to [out=-40, in=40] (2+\z,-7);
	\draw[->-=.5] (0+\z,4) to [out=210, in=180] (2+\z,-7);
	\draw[->-=.5] (0+\z,4) to [out=220, in=120] (0+\z,0);
	\draw[->-=.5] (0+\z,4) to [out=240, in=110] (0+\z,1);
	}

	\foreach \x in {0,2,10,12} {
	\foreach \y in {0,1,2,3,4} {
	\node at (\x,\y) [circle,fill,inner sep=1.2pt,color=red]{};
	}
	}
	\foreach \y in {-1,-2,-3,-4,-5,-6,-7} {
	\node at (2,\y) [circle,fill,inner sep=1.2pt,color=red]{};
	\node at (12,\y) [circle,fill,inner sep=1.2pt,color=red]{};
	}

	\foreach \y in {-1,-2,-3,-4,-5,-6,-7} {
	\node at (2,\y) [circle,fill,inner sep=1.2pt,color=red]{};
	}
	
	\draw[red] (0,-.3) node {\small $s$};
	\draw[red] (0.5,1) node {\small $a_1$};
	\draw[red] (2.5,1) node {\small $b_1$};
	\draw[red] (2.5,2) node {\small $b_2$};
	\draw[red] (2.5,3.2) node {\small $\vdots$};
	\draw[red] (2.6,0) node {\small $r_{n^p}$};
	\draw[red] (2.5,-3.8) node {\small $\vdots$};
	\draw[red] (2.5,-5) node {\small $r_3$};
	\draw[red] (2.5,-6) node {\small $r_2$};
	\draw[red] (2,-7.4) node {\small $r_1$};
	\draw[red] (0,4.4) node {\small $a_{n^q}$};
	\draw[red] (2,4.45) node {\small $b_{n^q}$};
	
	\draw[red] (10,5) node {\small $A$};
	\draw[blue] (12,5) node {\small $B$};
	\draw[green] (13,-.5) node {\small $R_2$};
	\draw[purple] (13,-4.5) node {\small $R_1$};

\end{tikzpicture}
\caption{Digraph $\Gamma_{(1,n^p,n^q)_+}$ (left). 
Digraph $\Gamma_{(1,2^3,2^2)_+}$ with partition $\{A,B, R_1, R_2\}$ (right).}
\label{figure:digraph}
\end{figure}
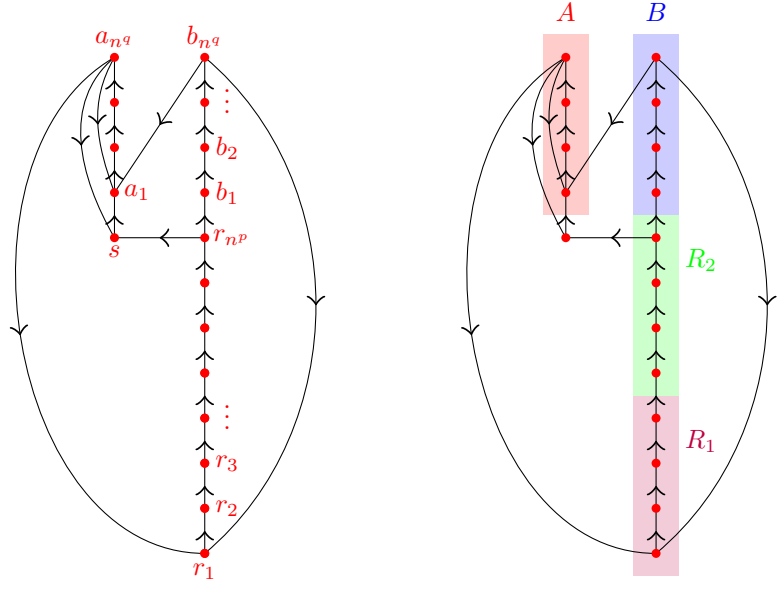


\vspace{.5em}
\noindent
Proof of Step 2.  
We will show that there is a vertex $v$ that is not contained in $\psi_{\ast}^{n^{2q}}(t)$.
Consider the partition $\{ A, B, R_1, R_2, \cdots, R_{n^{p-q}}\}$ of vertices $a_i$, $b_i$, and $r_i$ of $\Gamma$, 
where each partition element 
consists of $n^{q}$ vertices as in Figure \ref{figure:digraph}.
Under the iteration of the  $n^q$th power $\psi_{\ast}^{n^q}$ of $\psi_{\ast}$, one can see that
\begin{align*}
\psi_{\ast}^{n^q}(t) &= \{a_{n^q}, r_{n^q} \},\\
\psi_{\ast}^{2n^q}(t) &= \{a_{n^q}, a_{n^q-1}, r_{n^q}, r_{2n^q} \},\\
\psi_{\ast}^{3n^q}(t) &= \{a_{n^q}, a_{n^q-1}, a_{n^q-2} , r_{n^q}, r_{n^q-1}, r_{2n^q}, r_{3n^q} \},\\
&\vdots
\end{align*}
and that the number of vertices in each partition element, contained in $\psi_{\ast}^{j \cdot n^{q}}(t)$
is increasing by at most one as $j$ increases.
Hence one can see that there are vertices in each $R_k$ $(k= 1, \cdots, n^{p-q})$  
that are not contained in $\psi_{\ast}^{n^{2q}}(t)$.
More precisely, consider $R_1 = \{ r_1, r_2, \cdots, r_{n^{q}} \}$.
One can check that for vertices in $R_1$, the image $\psi_{\ast}^{j \cdot n^{q}}(t)$ contains only
$$\{ r_{n^q}, r_{n^q-1}, \cdots, r_{n^q-j+2} \} \subset R_1$$
for $2 \leq j \leq n^q$.
Therefore $\psi_{\ast}^{n^{2q}}(t)$ does not contain $r_1$, and we may choose $v$ to be $r_1$. 
This completes the proof of Step 2. 

%
%
\vspace{1em}
\textbf{Step 3.} \textit{There are distinct arcs $\alpha_v$ and $\beta_t$ in $\AC(S)$ such that 
$\psi^{n^{2q}}(\beta_t)$ and $ \alpha_v$ are disjoint.} 

\vspace{.5em}
Before proving Step 3, 
we first discuss some properties of 
the primitive integral class $(1,j,k)_+$ with $j>0$ and $k > 0$. 
Recall that 
$r_1, \cdots, r_j, b_1, \cdots, b_k$ are  vertices of $\Gamma = \Gamma_{(1,j,k)_+}$ 
which lie on the right column of $\Gamma$ (Figure \ref{fig_digraph_p_original}(1)). 
There is a single ideal polygon $P= P_{(1,j,k)_+}$ containing a single puncture $c_P$ of the fiber $S= S_{(1,j,k)_+}$  
such that the two endpoints of each real branch $b_i$ $(i= 1, \cdots, k)$ are switches (of $\tau$) in the boundary $\partial P$ of $P$, 
see Figure \ref{figure:indealpoly}.
From the construction of $\tau$ in \cite{Kin15}, 
it follows that $\partial P$ consists of periodic branches, i.e., infinitesimal branches, and 
$\psi= \psi_{(1,j,k)_+}$  maps $c_P$ to itself (and hence the ideal polygon $P$ is preserved by $\psi$). 
To see $\psi(c_P)= c_P$, 
we consider  the fiber $S= S_{(i,j,k)_+}$ with boundary. 
(So we now think of the above $c_P$ as a  boundary component of $S$.) 
By using Lemma \ref{lem_magicformula} for the primitive integral class $(1,j,k)_+$, 
we see that 
there is a boundary torus $T$ of $N$ such that 
$c_P$ is the only boundary component of $S$ 
which lies on $T$. 
This implies $c_P$ is preserved by $\psi$.

For the real branch $r_i$ $(i= 1, \cdots, j)$, consider its dual arc $\alpha_{r_i}$. 
Let $c_{r_i}$ and $c'_{r_i}$ be boundary components in  $\partial S$ which are connected by $\alpha_{r_i}$. 
(Possibly $c_{r_i} = c'_{r_i}$.)  
Then there is another boundary torus $T'$ of $N$  on which the both  $c_{r_i}$ and $c'_{r_i}$ lie.

\begin{figure}[t]
\centering
\begin{tikzpicture}[scale=.8]

	
	\draw[blue] (0,0) to [out = 90, in = -170] (1,4);
	\draw[red] (1,4) to [out = 10, in =90] (3,2.05);
	\draw (3,2) circle [radius=.06];
	\draw[red] (3,1.95) to [out = -90, in = 0] (1,-1);
	\draw[red] (1,-1) to [out = 180, in=-90] (0,0);
	
	\draw (1,4) to [out=-170, in=-20] (-2, 3.2);
	\draw (-2, 3.2) to [out=160, in=-20] (-2.5, 3.5);
	\draw (-2, 3.2) to [out=160, in=20] (-2.5, 3);
	\draw (1,4) to [out =10, in = -180] (2, 4.8);
	\draw (1,4) to [out = 10, in = 90] (1.8, 3.5);

	\draw (1.8,3.5) to [out = -90, in = 0] (1.5, 3.3);
	\draw (1.5,3.3) to [out = 0, in = 110] (2, 3.	1);
	
	\draw (0,0) to [out = -90, in = -170] (.8, -.5);
	\draw (.8,-.5) to [out = 10, in = -90] (1.2, 0);
	\draw (1.2,0) to [out = 90, in = 0] (1, 0.3);
	\draw (1,.3) to [out = 0, in = -120] (1.7, 0.5);
	
	\draw[dashed] (1.7, 0.5) to [out = 60, in = -70] (2,3.1);
	
	\draw (0,0) to [out=-90, in = 60] (-.2,-.6);
	
	\draw[dashed] (-.2,-.6) to [out = -120, in = 180] (1.4,-2);
	\draw[dashed] (1.4,-2) to [out = 0, in = -90] (5,2);
	\draw[dashed] (5,2) to [out = 90, in = 0] (2,4.8);
	
	\draw[->] (5.2, 3.8) to (4, 3);
	
	\node at (-.3,1.8) {\color{blue} $t$};
	\node at (3,3.3) {\color{red} $\beta_t$};
	\node at (3.5,1.9) {$c_P$};
	\node at (5.5,4) {$P$};
	\node at (-1, 4) {\large $\tau$};
\end{tikzpicture}
\caption{(A part of train track $\tau$.) 
Ideal polygon $P$, real branch $t = b_{n^q}$, and arc $\beta_t$ based at $c_P$.}
\label{figure:indealpoly}
\end{figure}
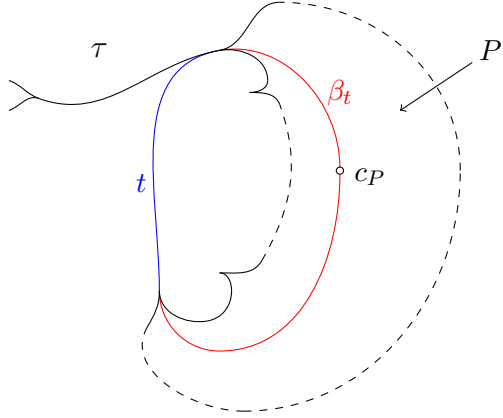


\medskip
\noindent
Proof of Step 3. 
Consider the primitive integral class $(1,n^p,n^q)_+$ in question. 
The two endpoints of the real branch $t= b_{n^q}$ are switches (of $\tau$) in $\partial P$. 
Join $c_P$ and each endpoint of the real branch $t$ by an arc and then we obtain an arc $\beta_t$ in S (see Figure \ref{figure:indealpoly}). 
Since $t$ is a real branch, one sees that the arc $\beta_t$ is essential.   
Since $\psi$ maps $c_P$ to itself, $\psi^{\ell}(\beta_t)$ is an essential arc based at the same  $c_P$ for each $\ell>0$. 
Moreover  $\psi^{\ell}(\beta_t)$ is not homotopic to $\beta_t$ for each $\ell>0$, 
since $\psi$ is pseudo-Anosov. 
Let us consider the dual arc $\alpha_v$ of $v= r_1$. 
Recall that  $c_v$ and $c'_v$ which are connected by $\alpha_v$
 lie on a boundary torus $T'$ of  $N$, yet 
 $c_P $  lies on the  different boundary torus $T$ of $N$. 
 The arc $\beta_t$ has end points at $c_P$, and hence 
$\beta_t$ is not homotopic to $\alpha_v$.

Now we prove that $\psi^{2q}(\beta_t)$ and $\alpha_v$ are disjoint.
The ideal polygon $P$ is preserved by $\psi$,
and  $\psi^{n^{2q}}(t)$  is carried by $\tau$ since $\tau$ is invariant under $\psi$. 
Moreover, since $\psi^{n^{2q}}(t)$ does not pass through  $v$  by the proof of Step 2, 
it follows that 
$\psi^{n^{2q}}(\beta_t)$ is disjoint from $v$, and hence also disjoint from its dual arc  $\alpha_v$.
This completes the proof of Step 3. 

%
%
\vspace{1em}
\textbf{Step 4.} \textit{We have
$$ \ell_{\AC}(\psi) \leq \frac{2}{n^{2q}}.$$}

\noindent
Proof of Step 4. 
Clearly $\beta_t$ and $\alpha_v$ are disjoint. 
Since $\psi^{n^{2q}}(\beta_t)$ is an essential arc based at $c_P$, 
we have 
 $\psi^{n^{2q}}(\beta_t) \neq \alpha_v$ in $\AC(S)$ 
 by the same argument as in the proof of Step 3. 
 This together with the fact that the geometric intersection number 
 $i(\psi^{n^{2q}}(\beta_t),  \alpha_v) = 0$ implies that 
$\beta_t$ and $\psi^{n^{2q}}(\beta_t)$ are at most distance 2 in $\AC(S)$, 
i.e., $d_{\AC}(\beta_t, \psi^{n^{2q}}(\beta_t)) \leq 2$. 
By the definition of the asymptotic translation length, it follows that 
$$\ell_{\AC}(\psi) \leq \frac{2}{n^{2q}}.$$
This completes the proof, and 
we finish the proof of Theorem \ref{thm:upperbound}.
\end{proof}

\begin{thm} \label{thm:upperbound_interior}
For any fixed positive integers $p$ and $q$ with $2p \leq q$, 
the sequence  $ (1,n^p, n^q)_+$ of primitive integral classes in $ \mathscr{C}_F$ 
converges projectively to  $ (\frac{1}{2}, \frac{1}{2},0) \in  int(F)$ as $n \to \infty$, and we have 
$$\ell_{\C}(\psi_{(1,n^p,n^q)_+}) \leq \frac{C}{n^{2q-p}},$$ 
where $C$ is a constant independent on $n$. 
\end{thm}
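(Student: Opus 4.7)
The projective convergence of $\overline{(1,n^p,n^q)_+}$ to $(\tfrac{1}{2}, \tfrac{1}{2}, 0) \in \mathrm{int}(F)$ is immediate from Lemma \ref{lem_pq}(2), since the assumption $2p \le q$ forces $p < q$. For the upper bound, the plan is to follow the proof of Theorem \ref{thm:upperbound} closely, using the same invariant train track $\tau = \tau_{(1,n^p,n^q)_+}$, the same digraph $\Gamma$, the same test real branch $t = b_{n^q}$ with its arc $\beta_t$ based at the $\psi$-invariant polygon puncture $c_P$, and the same 2-bilipschitz reduction $\mathcal{C}(S) \hookrightarrow \mathcal{AC}(S)$ from Step 1. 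It suffices to exhibit, for each large $n$, an integer $m_n \asymp n^{2q-p}$ and a real branch $v = r_\ell$ of the $r$-chain such that $\psi_{\ast}^{m_n}(t)$ does not contain $v$. Indeed, Lemma \ref{lem_magicformula} applied to $(1,n^p,n^q)_+ = (1+n^q,\, 1+n^p+n^q,\, 1)$ gives $\gcd(1,\, 2+n^p+2n^q)=1$, so $c_P$ is the unique boundary component of $S$ on its torus, and the endpoints of every dual arc $\alpha_{r_\ell}$ lie on a different torus of $\partial N$; Steps 3 and 4 of Theorem \ref{thm:upperbound} then deliver $\ell_{\mathcal{AC}}(\psi) \le 2/m_n$ and hence $\ell_{\mathcal{C}}(\psi) \le C/n^{2q-p}$.

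The combinatorial core is to locate such a pair $(m_n, \ell)$. Because the only incoming edges to $r_1$ in $\Gamma$ are from $a_{n^q}$ and $b_{n^q}$, and $r_\ell$ is reached from $r_1$ only along the chain $r_1 \to r_2 \to \cdots \to r_\ell$, one has $r_\ell \in \psi_{\ast}^m(t)$ iff $m - \ell + 1 \in P(t, r_1)$, where $P(t, r_1)$ denotes the set of lengths of directed paths from $t$ to $r_1$. This set decomposes as
\[
P(t, r_1) \;=\; \{1,\, n^q+1\} + \bigl(C_{r_1} \cup \{0\}\bigr),
\]
where $C_{r_1}$ is the numerical semigroup of closed-walk lengths at $r_1$. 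Inspection of $\Gamma$ shows that $C_{r_1}$ is generated by $\{n^p+n^q,\, n^p+n^q+1\}$ together with further elements of the form $n^p + (k+1)n^q + j$ (for $0 \le j \le k$) obtained by inserting $a$-chain loops into the basic big cycle through the $a$-chain. A standard count in the subsemigroup $\langle n^p+n^q,\, n^p+n^q+1\rangle$ gives $|C_{r_1} \cap [0, N]| \gtrsim (N/(n^p+n^q))^2 \sim n^{2(q-p)}$ at $N = n^{2q-p}$, which translates to a density of order $n^{-p}$ for $P(t, r_1)$ near the scale $n^{2q-p}$.

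The main obstacle will be to convert this sparsity into an explicit missing $r_\ell$. The idea is a pigeonhole argument: the average number of elements of $P(t, r_1)$ in an interval of length $n^p$ located near $n^{2q-p}$ is a bounded constant, strictly less than $n^p$. Hence some $m_n$ in the window $[n^{2q-p},\, 2 n^{2q-p}]$ satisfies $[m_n - n^p + 1,\, m_n] \not\subseteq P(t, r_1)$, and any $\ell \in [1, n^p]$ with $m_n - \ell + 1 \notin P(t, r_1)$ provides the desired missing vertex $r_\ell$. The extra generators of $C_{r_1}$ introduce only a bounded-density correction at scale $n^{2q-p}$ (they are spaced by at least $n^q$), so the pigeonhole argument still yields a suitable $m_n$ of order $n^{2q-p}$, and Steps 3 and 4 of Theorem \ref{thm:upperbound} conclude the proof.
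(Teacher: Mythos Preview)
Your framework is right and matches the paper: projective convergence via Lemma \ref{lem_pq}(2), the reduction $\mathcal{C}(S)\hookrightarrow\mathcal{AC}(S)$, the train track $\tau$ and digraph $\Gamma$, the test branch $t=b_{n^q}$ with its arc $\beta_t$ based at the $\psi$-fixed puncture $c_P$, and the endgame of Steps 3--4 from Theorem \ref{thm:upperbound}. What remains is to produce $m_n\asymp n^{2q-p}$ and some $r_\ell$ with $r_\ell\notin\psi_\ast^{m_n}(t)$.

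Here the paper does something much more direct than your density argument: it shows that for \emph{every} $j\le D:=\lfloor(n^q-1)/(n^p+1)\rfloor$ the entire set $R=\{r_1,\ldots,r_{n^p}\}$ is disjoint from $\psi_\ast^{jn^q}(t)$, by the same step-by-step tracking as in Step~2 of Theorem \ref{thm:upperbound}. This gives the explicit choice $m_n=Dn^q\asymp n^{2q-p}$ and $v=r_1$, with no counting needed.

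Your density/pigeonhole route has a genuine error, not merely a gap. The claim that $P(t,r_1)$ has density $O(n^{-p})$ near $n^{2q-p}$ is false in the regime $2p\le q$. A walk from $b_{n^q}$ to $r_1$ has length of the form $1+An^p+Bn^q+C$ where $A$ counts $r$-chain traversals, $B$ counts all long-chain traversals, and $C$ counts certain ``$+1$'' steps through $s$, subject to $0\le A\le B$ and $0\le C\le B$ (plus mild reachability constraints). For $q=2p$ and $N=n^{3p}=n^{2q-p}$, one has $B\le n^p$, and for each $B<n^p$ the $(B+1)^2$ pairs $(A,C)$ give \emph{distinct} values $An^p+Bn^{2p}+C$ lying in pairwise disjoint blocks; summing yields on the order of $n^{3p}/3$ distinct elements of $P(t,r_1)$ in $[0,n^{3p}]$. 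Thus the density is bounded below by a positive constant, not $O(n^{-p})$, and your pigeonhole (``average count in an $n^p$-window is a bounded constant'') fails. The ``extra generators give only a bounded correction'' heuristic is exactly what breaks: the left-column loops let $C$ range freely up to $B$, which fills in most of each $n^q$-block. One really needs to exhibit \emph{specific} missing lengths, and the paper's choice $m_n=Dn^q$ does precisely this.
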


\begin{proof}
The first half of the claim follows from Lemma \ref{lem_pq}(2). 
For the rest of the proof, 
let $\psi = \psi_{(1,n^p,n^q)_+}$.
Consider the digraph $\Gamma = \Gamma_{(1,n^p,n^q)_+}$ 
and the induced  map $\psi_{\ast}: V(\Gamma) \rightarrow V(\Gamma)$. 
Let $t$ be the vertex $b_{n^q}$ of $\Gamma$. 
By using a similar argument as in Step 2 of the proof of Theorem \ref{thm:upperbound}, 
one can show that 
the set of vertices $\psi_{\ast}^{j \cdot n^{q}}(t)$ is contained in $V(\Gamma) \setminus R$ 
for $j= 1, \cdots, \left\lfloor \frac{n^q-1}{n^p+1} \right\rfloor $, 
where 
$R= \{r_1, r_2, \cdots, r_{n^p}\}$. 
In other words, 
each vertex in $R$ is not contained in $\psi_{\ast}^{j \cdot n^{q}}(t)$ 
for such $j $. 
In particular, if we set $D= D(n)= \left\lfloor \frac{n^q-1}{n^p+1} \right\rfloor $, 
then  $r_1$ is not contained in $\psi_{\ast}^{D  n^{q}}(t)$. 
Then we consider the two arcs $\beta_t$ and $\alpha_v$ as in Step 3 of the proof of Theorem \ref{thm:upperbound}. 
By the same argument, it follows that $\beta_t$, $\alpha_t$ and $\psi^{D  n^{q}}(\beta_t)$ are distinct elements in $\AC(S)$. 
Moreover we have 
$i(\psi^{D n^{q}}(\beta_t), \alpha_v) = 0$ and 
$i(\beta_t, \alpha_v) = 0$. 
Therefore $\beta_t$ and $\psi^{D n^{q}}(\beta_t)$ are at most distance $2$
in $\AC(S)$, and we have
$\displaystyle \ell_{\AC}(\psi) \leq \frac{2}{D n^{q}}$ which implies that 
 $\displaystyle \ell_{\C}(\psi) \leq \dfrac{4}{D n^{q}}$. 
Since $D n^{q} \asymp n^{2q-p}$, 
we finish the proof.
\end{proof}

\medskip
\subsection{The behaviors of asymptotic translation lengths}
\label{sec:behaviors}

\medskip
We prove the following lemma which implies that the upper bound of  Theorem~\ref{thm:newbound} is optimal.

\begin{center}
\begin{figure}[t]
\includegraphics[height=5.2cm]{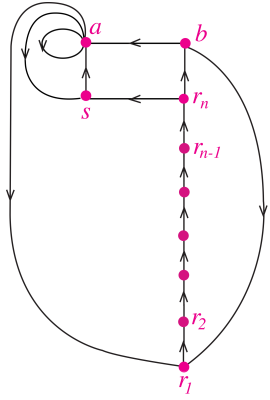}
\caption{Digraph $\Gamma_{(1,n,1)_+}$.} 
\label{fig_digraphbraid}
\end{figure}
\end{center}

\begin{lem} \label{lem:optimalupperbound} 
The sequence 
$(1,n,1)_+$ of primitive integral classes in $  \mathscr{C}_F$ 
converges projectively  to a point in $\partial F$ as $n \to \infty$, and we have 
$$\ell_{\mathcal{C}}(\psi_{(1,n,1)_+}) \asymp \dfrac{1}{|\chi(S_{(1,n,1)_+})|}.$$
\end{lem}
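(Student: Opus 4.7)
The plan is to verify three things: the projective convergence of $\overline{(1,n,1)_+}$ to a boundary point of $F$, an upper bound on $\ell_{\mathcal{C}}(\psi_{(1,n,1)_+})$, and a matching lower bound.

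First I would write $(1,n,1)_+ = (1,1,1) + n(0,1,0) + (1,1,0) = (2, n+2, 1)$, so by Lemma \ref{lem_magicformula} the Thurston norm equals $|\chi(S_{(1,n,1)_+})| = 2 + (n+2) - 1 = n+3$. Since $\gcd(2, n+2, 1) = 1$ the class is primitive, and dividing $(2, n+2, 1)$ by $n+3$ gives a sequence converging to the vertex $(0,1,0)$ of $F$, which lies on $\partial F$. The upper bound is immediate from Theorem \ref{thm:newbound} applied to the fibered face $F$ of the magic manifold: it produces a constant $C$ depending only on $F$ such that $\ell_{\mathcal{C}}(\psi_{(1,n,1)_+}) \leq C/(n+3)$, and this is the desired upper bound.

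For the lower bound I would work with the invariant train track $\tau_{(1,n,1)_+}$ and its digraph $\Gamma_{(1,n,1)_+}$ in Figure \ref{fig_digraphbraid}, which has $n+3$ real-branch vertices. Paralleling the argument of Proposition \ref{prop:uniformedgepath} (specifically Lemmas \ref{lem:leftcolumn}--\ref{lem:keylem}), I would exhibit an integer $r = O(n)$ such that for every vertex $v$ of $\Gamma_{(1,n,1)_+}$ the edge path representing $\psi_{(1,n,1)_+}^{r}(v)$ traverses every real branch. Since $|\chi(S_{(1,n,1)_+})| = n+3$, Proposition \ref{prop_lowerbound} then yields $\ell_{\mathcal{C}}(\psi_{(1,n,1)_+}) \geq 1/(r + 30(n+3)) \gtrsim 1/n \asymp 1/|\chi(S_{(1,n,1)_+})|$.

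The main obstacle I anticipate is the explicit combinatorial analysis of $\Gamma_{(1,n,1)_+}$ needed to pin down $r$. When $k = 1$ the setup of Proposition \ref{prop:uniformedgepath} partially degenerates: the left column contains only $s$ and $a_1$, while only the $r_1, \ldots, r_n$ portion of the right column is long. The key step will be to identify a short cycle of length $O(1)$ through the $b$-vertex together with a long cycle of length roughly $n+2$ passing through the $r_i$, and then to concatenate these, as in the proof of Lemma \ref{lem:leftcolumnext}, to build edge paths of any prescribed length $\geq n + O(1)$ from any chosen vertex to any other. Once such concatenations are available, one may take $r$ on the order of $n$, and the rest of the proof matches the application of Proposition \ref{prop_lowerbound} in Theorem \ref{thm:lowerbound}.
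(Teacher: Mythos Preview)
Your proposal is correct. The projective convergence and the lower bound are handled exactly as in the paper: both appeal to the digraph $\Gamma_{(1,n,1)_+}$ together with Proposition~\ref{prop_lowerbound}, and the paper is in fact terser than you are (it simply asserts ``it is not hard to see that $\ell_{\mathcal{C}}(\psi_{(1,n,1)_+})\gtrsim 1/n$'' from the digraph, without spelling out the cycle analysis you outline). The one genuine difference is in the upper bound: you invoke Theorem~\ref{thm:newbound} directly, which is logically available at this point and gives $\ell_{\mathcal C}(\psi)\le C/(n+3)$ for free. The paper instead redoes the upper bound by hand, tracking the vertex $t=b$ through the digraph to see that $\psi_*^n(t)=\{r_n\}$ misses $r_1$, and then using the arc pair $\beta_t,\alpha_{r_1}$ (as in the proof of Theorem~\ref{thm:upperbound}) to get the explicit bound $\ell_{\mathcal C}(\psi)\le 4/n$. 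Your route is shorter; the paper's route yields an explicit constant and keeps the lemma self-contained within the Section~\ref{section:magicmanifold} machinery.
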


\begin{proof}
The first half of the claim follows from that fact that 
$\overline{(1,n,1)_+} \rightarrow (0,1,0) \in \partial F$ as $n \rightarrow \infty$. 
Since  $|\chi(S_{(1,n,1)_+})| =n+3$, it is enough to prove that 
$\ell_{\mathcal{C}}(\psi_{(1,n,1)_+}) \asymp 1/n$. 
By the digraph $\Gamma= \Gamma_{(1,n,1)_+}$ (see Figure~\ref{fig_digraphbraid}) 
together with Proposition~\ref{prop_lowerbound}, 
it is not hard to see that $\ell_{\mathcal{C}}(\psi_{(1,n,1)_+})  \gtrsim 1/n$.

 Now we compute the upper bound.
Let $(S, \psi) = (S_{(1,n,1)_+}, \psi_{(1,n,1)_+})$ and 
let $t$ be the vertex $b$ of $\Gamma$. 
We have 
$$\psi_*(t)= \{r_1\},\ \psi^2_*(t)= \{r_2\}, \cdots, \psi^n_*(t)= \{r_n\}.$$
In particular this implies that 
$\psi^n(t)$ does not pass through the real branch $r_1$ of $\tau = \tau_{(1,n,1)_+}$. 
We consider the essential arc $\beta_t$ for $t$ as in the proof of Theorem \ref{thm:upperbound}, 
and consider the dual arc $\alpha_{r_1}$ of $r_1$. 
By the same argument as in the proof of Theorem  \ref{thm:upperbound}, 
one sees that 
the three arcs $\beta_t$, $\psi^n(\beta_t)$ and $\alpha_{r_1}$ are distinct elements in $\AC(S)$. 
Furthermore for the geometric intersection numbers between arcs, 
we have $i(\beta_t,  \alpha_{r_1})=0$ and $i(\psi^n(\beta_t), \alpha_{r_1})=0$. 
Therefore 
$\beta_t$ and $\psi^n(\beta_t)$ are at most distance 2 in $\AC(S)$, 
and we have 
$\ell_{\AC}(\psi) \leq 2/n$, 
which gives the desired upper bound $\ell_{\mathcal{C}}(\psi) \leq 4/n$. 
This completes the proof. 
\end{proof}

Now we are ready to prove the following theorem.

\begin{thm} \label{thm:allrational}
Let $F$ be a fibered face of the magic manifold. 
Then there exist two points  
$\mathfrak{b}_0 \in \partial F$ and $\mathfrak{c}_0 \in int(F)$ 
 which satisfy the following. 
\begin{enumerate}
\item 
For any $r \in \mathbb{Q} \cap [1,2)$, there exists  a sequence $(S_{\alpha_n},  \psi_{\alpha_n})$ of primitive integral classes 
 in $\mathscr{C}_F$ converging projectively to $\mathfrak{b}_0 $ as $n \to \infty$ 
  such that 
$$\ell_{\mathcal{C}}(\psi_{\alpha_n}) \asymp
  \frac{1}{|\chi(S_{\alpha_n})|^{r} }. $$

\item 
For any $r \in \Q \cap [\frac{3}{2}, 2]$, 
there exists a sequence $(S_{\alpha_n},  \psi_{\alpha_n})$ of primitive integral classes  in $\mathscr{C}_F$ 
converging  projectively to $\mathfrak{c}_0$ as $n \rightarrow \infty$
such that 
$$\ell_{\C}(\psi_{\alpha_n}) \asymp \frac{1}{|\chi(S_{\alpha_n})|^r}.$$
In particular, the upper bound in Theorem  \ref{thm:BaikShinWu} is optimal when $d=2$. 
%
\end{enumerate}
\end{thm}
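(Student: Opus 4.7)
The plan is to realize each prescribed exponent $r$ by a sequence drawn from the two-parameter family $(1, n^p, n^q)_+$ whose asymptotics were established in Sections~\ref{sec:lowerbound} and~\ref{sec:upperbound}. For the primitive integral class $(1, n^p, n^q)_+ = (1 + n^q,\, 1 + n^p + n^q,\, 1)$, Lemma~\ref{lem_magicformula} gives $|\chi(S_{(1, n^p, n^q)_+})| = 1 + n^p + 2n^q$, and Lemma~\ref{lem_pq} identifies the projective limit as $(0, 1, 0) \in \partial F$ when $p > q$ and as $(\tfrac{1}{2}, \tfrac{1}{2}, 0) \in int(F)$ when $p < q$. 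Accordingly I set $\mathfrak{b}_0 = (0, 1, 0)$ and $\mathfrak{c}_0 = (\tfrac{1}{2}, \tfrac{1}{2}, 0)$ once and for all.

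For part~(1), the endpoint $r = 1$ is supplied by the sequence $(1, n, 1)_+$ via Lemma~\ref{lem:optimalupperbound}, whose projective limit is $\mathfrak{b}_0$. For rational $r \in (1, 2)$, write $r = a/b$ in lowest terms and take $p = 2b$, $q = a$; the condition $1 < r < 2$ becomes exactly $q < p < 2q$, so Lemma~\ref{lem_pq}(3) places the projective limit at $\mathfrak{b}_0$, Theorems~\ref{thm:lowerbound} and~\ref{thm:upperbound} yield $\ell_{\mathcal{C}}(\psi_{(1, n^p, n^q)_+}) \asymp 1/n^{2q}$, and since $p > q$ gives $|\chi| \asymp n^p$, this is comparable to $1/|\chi|^{2q/p} = 1/|\chi|^r$, as required.

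For part~(2) with rational $r \in [3/2, 2)$, set $s = 2 - r \in (0, 1/2] \cap \mathbb{Q}$, write $s = p/q$ in lowest terms so that $p, q \geq 1$ with $2p \leq q$. Lemma~\ref{lem_pq}(2) then lands the projective limit at $\mathfrak{c}_0$, the third case of Theorem~\ref{thm:lowerbound} paired with Theorem~\ref{thm:upperbound_interior} yields $\ell_{\mathcal{C}} \asymp 1/n^{2q - p}$, and the comparability $|\chi| \asymp n^q$ translates this into $1/|\chi|^{(2q - p)/q} = 1/|\chi|^r$. The boundary value $r = 3/2$, realized by $(p, q) = (1, 2)$, simultaneously produces the sequence that certifies the optimality of the bound in Theorem~\ref{thm:BaikShinWu} for $d = 2$.

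The one remaining exponent is $r = 2$, which would formally correspond to $p = 0$ in the above parameterization and therefore falls outside the regime $p, q \geq 1$ that is covered by Sections~\ref{sec:lowerbound} and~\ref{sec:upperbound}. This is the main obstacle, and the proposed fix is to use the degenerate family $(1, 1, n)_+$, for which $|\chi| \asymp n$. The plan is to describe the invariant train track $\tau_{(1, 1, n)_+}$ and its digraph $\Gamma_{(1, 1, n)_+}$ (a collapsed version of Figure~\ref{fig_digraph_p_original}(1) in which the $r$-vertices reduce to a single vertex), reproduce Proposition~\ref{prop:uniformedgepath} in this degenerate setting to obtain the lower bound $\ell_{\mathcal{C}} \gtrsim 1/n^2$, and adapt the arc--vertex argument of Steps~2--4 in the proof of Theorem~\ref{thm:upperbound} to obtain the matching upper bound $\ell_{\mathcal{C}} \lesssim 1/n^2$. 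The resulting comparability $\ell_{\mathcal{C}}(\psi_{(1, 1, n)_+}) \asymp 1/|\chi|^2$ then completes part~(2).
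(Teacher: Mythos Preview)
Your treatment of all exponents $r<2$ matches the paper's proof essentially verbatim: the same two--parameter family $(1,n^p,n^q)_+$, the same case split governed by the relative sizes of $p$ and $q$, and the same appeals to Theorems~\ref{thm:lowerbound}, \ref{thm:upperbound}, \ref{thm:upperbound_interior} and Lemma~\ref{lem:optimalupperbound}. Your parameterizations ($p=2b$, $q=a$ for part~(1); $s=p/q$ in lowest terms for part~(2)) are cosmetic variants of the paper's.

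The only genuine gap is at $r=2$. You correctly observe that $(1,n^p,n^q)_+$ with $p\ge 1$ cannot reach this exponent, and you propose a fresh train--track analysis for the degenerate family $(1,1,n)_+$; but you do not carry it out, and in the collapsed digraph (a single $r$--vertex) the Step~2/Step~3 arc argument of Theorem~\ref{thm:upperbound} does not transplant automatically---some care would be needed to locate a real branch that $\psi_*^{\,cn^2}(t)$ misses. The paper sidesteps all of this: it simply invokes \cite[Corollary~1]{BaikShinWu18}, which gives $\ell_{\mathcal{C}}\asymp 1/|\chi(S)|^{2}$ for any sequence of primitive classes lying in a fixed $2$--dimensional rational subspace of $H^1(N)$ and staying in the cone over a compact subset of $int(F)$. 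Your own sequence $(1,1,n)_+=(1+n,\,2+n,\,1)$ already satisfies both hypotheses---it lies on the affine line $(1,2,1)+n(1,1,0)$, hence in a $2$--dimensional rational linear subspace, and it converges projectively to $(\tfrac12,\tfrac12,0)\in int(F)$---so that external result closes the case with no further work. (Incidentally, your choice of sequence here is tidier than the paper's illustrative example $(1,n,n)_+$, whose projective limit is $(\tfrac13,\tfrac23,0)$ rather than $\mathfrak{c}_0$.)
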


\begin{proof}
Because of the symmetry of the Thurston norm ball $B_N$, 
it suffices to prove the theorem for the  fibered face 
as we picked in Section \ref{sec:magicmanifold}. 
For (1), 
if $1< r <2$,
let $p$ and $q$ be positive integers such that $r= 2q/p$ with  $q < p < 2q$ .
By Lemma \ref{lem_pq}, the sequence$(1,n^p,n^q)_+$ 
converges projectively to $(0,1,0) \in \partial F$. 
By Theorems \ref{thm:lowerbound} and  \ref{thm:upperbound}, 
we have $\ell_{\C}(\psi_{(1,n^p,n^q)_+}) \asymp 1/n^{2q}$. 
Since we have $||(1,n^p,n^q)_+|| \asymp n^p$, it follows that 
$$ \ell_{\C}(\psi_{(1,n^p,n^q)_+}) \asymp \frac{1}{|\chi(S_{(1,n^p,n^q)_+})|^{\frac{2q}{p}}} = \frac{1}{|\chi(S_{(1,n^p,n^q)_+})|^r},$$
where $r = 2q/p \in (1, 2)$. 
If $r=1$, it follows from Lemma \ref{lem:optimalupperbound}.

For (2), 
 if $\frac{3}{2} \le r<2$, let $p$ and $q$ be positive integers such that
$r=2-p/q$ with $2p \leq q$.
By Lemma \ref{lem_pq}, the sequence $(1,n^p,n^q)_+$  converges projectively to 
$(\frac{1}{2}, \frac{1}{2},0) \in int(F)$ as $n \ra \infty$. 
By Theorems \ref{thm:lowerbound} and \ref{thm:upperbound_interior}, we have 
$\ell_{\C}(\psi_{\alpha_n}) \asymp 1/n^{2q-p}$.
Since we have  $||(1,n^p,n^q)_+|| \asymp n^q$, it follows that 
$$ \ell_{\C}(\psi_{(1,n^p,n^q)_+}) \asymp \frac{1}{|\chi(S_{(1,n^p,n^q)_+})|^{2-\frac{p}{q}}} = \frac{1}{|\chi(S_{(1,n^p,n^q)_+})|^r},$$
where $r = 2-p/q \in [\frac{3}{2}, 2)$. 
For $r=2$, one can choose a sequence of primitive integral classes 
contained in the intersection between the cone over some compact set $K \subset int(F)$ 
and some 2-dimensional rational subspace of $H^1(M)$. 
(e.g. the sequence $(1,n,n)_+$.) 
Then the sequence satisfies the desired property from  \cite[Corollary 1]{BaikShinWu18}. 
%

Finally we consider the upper bound in Theorem \ref{thm:BaikShinWu} when $d=2$. 
If $(p,q)= (1,2)$, then 
$$ \ell_{\C}(\psi_{(1,n,n^2)_+}) \asymp  \frac{1}{|\chi(S_{(1,n,n^2)_+})|^{1+\frac{1}{2}}}. $$
Then Theorem \ref{thm:BaikShinWu} 
implies that 
the sequence $(1,n,n^2)_+$ of primitive integral classes can not be contained in any finite union of $2$-dimensional rational subspaces of $H^1(N)$. 
The fibered cone $\mathscr{C}_F$ is a $(2+1)$-dimensional rational subspace of $H^1(N)$. 
Thus Theorem \ref{thm:BaikShinWu} is optimal when $d=2$. 
\end{proof}

In light of Theorem  \ref{thm:allrational}(1), we ask the following question. 

\begin{question}
Let $F$ be a fibered face of a compact hyperbolic fibered $3$-manifold. 
Does  there exist a sequence $(S_{\alpha_n},  \psi_{\alpha_n})$ of primitive  integral classes 
 in $\mathscr{C}_F$ converging projectively to  $\partial F$ as $n \to \infty$ 
  such that 
$\ell_{\mathcal{C}}(\psi_{\alpha_n}) \asymp \dfrac{1}{|\chi(S_{\alpha_n})|^2}$?
\end{question}

\medskip

By Theorem \ref{thm:allrational}, 
we immediately have the following corollary.


\begin{cor}\label{cor:nocontinuousextension} 

Let $F$ be a fibered face of the magic manifold $N$.
For $\alpha \in F \cap H^1(N;\Q)$, let $(S_{\widetilde{\alpha}}, \psi_{\widetilde{\alpha}})$
be the fiber and pseudo-Anosov monodromy corresponding to the primitive integral class $\widetilde{\alpha}$ lying on 
the ray of $\alpha$ passing through the origin.
Then there is no normalization of the asymptotic translation length function
\begin{align*}
F \cap H^1(N;\Q) & \rightarrow \R_{\geq 0}\\
\alpha &\mapsto \ell_{\C}(\psi_{\widetilde{\alpha}}),
\end{align*}
in terms of the Euler characteristic $\chi(S_{\widetilde{\alpha}})$ which admits a continuous extension on $F$.
\end{cor}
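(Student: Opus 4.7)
My plan is to interpret a \emph{normalization in terms of the Euler characteristic} as a function $\alpha \mapsto |\chi(S_{\widetilde{\alpha}})|^s \cdot \ell_{\C}(\psi_{\widetilde{\alpha}})$ for some real $s$ (the direct analog of Fried's normalization $\alpha \mapsto \|\alpha\|\log\lambda(\psi_{\widetilde{\alpha}})$ for entropy), and to argue by contradiction that no such $s$ admits a continuous, not-identically-zero extension $\bar{\Phi}\colon F \to \R_{\geq 0}$. The basic computation is that along any sequence of primitive integral classes $(\alpha_n)$ in $\mathscr{C}_F$ with $\ell_{\C}(\psi_{\alpha_n}) \asymp |\chi(S_{\alpha_n})|^{-r}$ and $|\chi(S_{\alpha_n})| \to \infty$, the candidate $|\chi|^s \ell_{\C}$ behaves like $|\chi|^{s-r}$, so it tends to $\infty$ when $s > r$, to a positive constant up to comparability when $s = r$, and to $0$ when $s < r$.

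The argument proceeds by successively squeezing $s$. First, I apply Theorem \ref{thm:allrational}(2) with $r_1 = 3/2$ and $r_2 = 2$ to produce two sequences projectively converging to $\mathfrak{c}_0 \in int(F)$; since both must yield the common limit $\bar{\Phi}(\mathfrak{c}_0)$, the trichotomy above forces $s < 3/2$ and $\bar{\Phi}(\mathfrak{c}_0) = 0$, because any $s \in [3/2, 2]$ would make one of the two limits positive (or infinite) while the other vanishes. Next, I apply Theorem \ref{thm:allrational}(1) with $r_1 = 1$ and some $r_2 \in \Q\cap(1,2)$ to produce two sequences projectively converging to $\mathfrak{b}_0 \in \partial F$, and an identical analysis forces $s < 1$ and $\bar{\Phi}(\mathfrak{b}_0) = 0$.

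Finally, I combine $s < 1$ with the universal bound $\ell_{\C}(\psi) \leq C/|\chi(S)|$ from Theorem \ref{thm:newbound}: for any sequence $(\alpha_n)$ of primitive integral classes in $\mathscr{C}_F$ with $|\chi(S_{\alpha_n})| \to \infty$, one has $|\chi|^s \ell_{\C} \leq C|\chi|^{s-1} \to 0$. Given any rational point $\alpha_0 \in F \cap H^1(N;\Q)$, there exists a sequence of distinct rational points in $F$ converging to $\alpha_0$ whose primitive integral representatives have Thurston norms tending to infinity (since primitive integral classes of bounded Thurston norm in $\mathscr{C}_F$ form a finite set), so continuity of $\bar{\Phi}$ forces $\bar{\Phi}(\alpha_0) = 0$, contradicting the fact that $\Phi(\alpha_0) = |\chi(S_{\widetilde{\alpha_0}})|^s \ell_{\C}(\psi_{\widetilde{\alpha_0}}) > 0$. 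The main delicate point I expect is the squeezing step when $s$ equals one of the candidate exponents $r_i$, where one must verify that the contribution from the \emph{other} sequence still forces the common limit to vanish; a careful accounting using the bounded-ratio definition of $\asymp$ handles this but needs to be written out precisely.
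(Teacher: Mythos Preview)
Your argument is correct, but it is considerably more elaborate than what the paper has in mind. The paper's entire proof is the single sentence ``By Theorem~\ref{thm:allrational}, we immediately have the following corollary,'' and the immediate reading is this: in the proof of Theorem~\ref{thm:allrational}(2) the point $\mathfrak{c}_0$ is identified as $(\tfrac12,\tfrac12,0)$, a \emph{rational} interior point of $F$. Any continuous extension $\bar\Phi$ therefore satisfies $\bar\Phi(\mathfrak{c}_0)=\Phi(\mathfrak{c}_0)=|\chi(S_{\widetilde{\mathfrak{c}_0}})|^{s}\ell_{\C}(\psi_{\widetilde{\mathfrak{c}_0}})>0$. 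But your own trichotomy, applied to the two sequences from part~(2) with $r_1=3/2$ and $r_2=2$, shows that the only way both sequences can share a common finite limit at $\mathfrak{c}_0$ is for that limit to be $0$. This is already a contradiction, so your Step~1 alone finishes the proof once one notes that $\mathfrak{c}_0\in F\cap H^1(N;\Q)$.

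Your longer route---squeezing $s<1$ via part~(1) at the boundary point $\mathfrak{b}_0$ and then invoking the universal bound of Theorem~\ref{thm:newbound} to force $\bar\Phi$ to vanish at \emph{every} rational interior point---is a genuine and valid alternative. Its merit is that it uses only the statement of Theorem~\ref{thm:allrational} (where the rationality of $\mathfrak{c}_0$ is not asserted) and it makes explicit how Theorem~\ref{thm:newbound} already obstructs any normalization with exponent $s<1$; the cost is invoking three results instead of one. One cosmetic point: you need not hypothesize that the extension is ``not identically zero''---since an extension must agree with $\Phi$ on rational interior points and $\Phi$ is strictly positive there, this is automatic, and indeed it is precisely what you exploit in your final contradiction.
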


\medskip
\bibliographystyle{alpha} 
\bibliography{fiberedcone}

\end{document}